\documentclass[letterpaper, 11pt,  reqno]{amsart}

\usepackage[margin=1.2in,marginparwidth=1.5cm, marginparsep=0.5cm]{geometry}

\setlength{\pdfpagewidth}{8.50in}
\setlength{\pdfpageheight}{11.00in}

\usepackage{amsmath,amssymb,amscd,amsthm,amsxtra, esint, xcolor}

\usepackage[implicit=true]{hyperref}

\allowdisplaybreaks[2]

\sloppy

\hfuzz  = 0.5cm %allows mathformula to wiggle a bit

\usepackage{enumitem}
\usepackage{cancel}%for cancelling terms

\usepackage[mathscr]{euscript}

\usepackage{color}

%Couleurs
\definecolor{gr}{rgb}   {0.,   0.69,   0.23 }
\definecolor{bl}{rgb}   {0.,   0.5,   1. }
\definecolor{mg}{rgb}   {0.85,  0.,    0.85}
%\definecolor{gy}{rgb}   {0.8,  0.8,   0.8}
\definecolor{yl}{rgb}   {0.8,  0.7,   0.}
\definecolor{or}{rgb}  {0.7,0.2,0.2}

\newtheorem{theorem}{Theorem} [section]

\newtheorem{lemma}[theorem]{Lemma}
\newtheorem{proposition}[theorem]{Proposition}
\newtheorem{remark}[theorem]{Remark}

\newtheorem{definition}[theorem]{Definition}
\newtheorem{corollary}[theorem]{Corollary}

\newtheorem*{ack}{Acknowledgments}

%[section]

%\newtheorem{theorem}{Theorem} [section]
%\newtheorem{maintheorem}{Theorem}
%\newtheorem{lemma}[theorem]{Lemma}
%\newtheorem{proposition}[theorem]{Proposition}
%\newtheorem{remark}[theorem]{Remark}
%\newtheorem{example}[theorem]{Example}
%\newtheorem{exercise}{Exercise}
%\newtheorem{definition}[theorem]{Definition}
%\newtheorem{corollary}[theorem]{Corollary}
%\newtheorem{notation}{Notation}[section]
%\newtheorem{claim}[theorem]{Claim}
%
%
%\newtheorem{problem}{Problem}
%
%\newtheorem{conjecture}{Conjecture}[section]
%\newtheorem{scheme}{Scheme}[section]

%Lower/Upper bound appears below /above the integral sign
\DeclareMathOperator*{\intt}{\int}

\DeclareMathOperator*{\supp}{supp}

%
%Roman I

%Roman II

%Roman III

\newcommand{\noi}{\noindent}
\newcommand{\Z}{\mathbb{Z}}
\newcommand{\R}{\mathbb{R}}

\newcommand{\T}{\mathbb{T}}

\let\P= \undefined
\newcommand{\P}{\mathbf{P}}

\newcommand{\M}{\mathcal{M}}

\newcommand{\FL}{\mathcal{F}L} %%%%%%%%%%%% Fourier--Lebesgue spaces

\newcommand{\RR}{\mathcal{R}}

\renewcommand{\S}{\mathcal{S}}

\newcommand{\K}{\mathcal{K}}

\newcommand{\F}{\mathcal{F}}

\newcommand{\Nf}{\mathfrak{N}}
\newcommand{\Rf}{\mathfrak{R}}

\newcommand{\al}{\alpha}
\newcommand{\be}{\beta}

\newcommand{\dl}{\delta}

\newcommand{\Dl}{\Delta}
\newcommand{\eps}{\varepsilon}
\newcommand{\kk}{\kappa}
\newcommand{\g}{\gamma}
\newcommand{\G}{\Gamma}

\newcommand{\s}{\sigma}

\newcommand{\ft}{\widehat}

\newcommand{\wt}{\widetilde}
\newcommand{\cj}{\overline}

\newcommand{\dt}{\partial_t}
\newcommand{\dd}{\partial}

\newcommand{\ta}{\theta}

\renewcommand{\l}{\ell}

\newcommand{\les}{\lesssim}
\newcommand{\ges}{\gtrsim}

%Japanese Bracket
\newcommand{\jb}[1]
{\langle #1 \rangle}

\newcommand{\ind}{\mathbf{1}}

\newcommand{\NB}{\mathbb{N}}

\newcommand{\NN}{\mathcal{N}}

\newcommand{\uu}{{\bf u}}
\newcommand{\vv}{{\bf v}}

\newcommand{\TT}{\mathcal{T}}
\newcommand{\n}{{\bf n}}

\numberwithin{equation}{section}
\numberwithin{theorem}{section}

%%%%%%%%%%%%%%%%%%%%%%%%%%%
%%%%%%%%%%%%%%%%%%%%%%%%%%%
%

\makeatletter
\@namedef{subjclassname@2020}{\textup{2020} Mathematics Subject Classification}
\makeatother

\begin{document}
\baselineskip = 14pt

\title[Hyperbolic NLS]
{Sharp unconditional well-posedness of the 
2-$d$ periodic
cubic hyperbolic nonlinear Schr\"odinger equation}

\author[E.~Ba\c{s}ako\u{g}lu, T.~Oh, and Y.~Wang]
{Engin Ba\c{s}ako\u{g}lu, Tadahiro Oh, and Yuzhao Wang}

\address{Engin Ba\c{s}ako\u{g}lu, 
Institute of Mathematical Sciences, ShanghaiTech University, Shanghai, 201210, China}
\email{ebasakoglu@shanghaitech.edu.cn}

%%
%\address{
%Tadahiro Oh, 
%School of Mathematics and Statistics, Beijing Institute of Technology, Beijing 100081, China\\
%and
%School of Mathematics\\
%The University of Edinburgh\\
%and The Maxwell Institute for the Mathematical Sciences\\
%James Clerk Maxwell Building\\
%The King's Buildings\\
%Peter Guthrie Tait Road\\
%Edinburgh\\
%EH9 3FD\\
% United Kingdom}

\address{
Tadahiro Oh, 
School of Mathematics\\
The University of Edinburgh\\
and The Maxwell Institute for the Mathematical Sciences\\
James Clerk Maxwell Building\\
The King's Buildings\\
Peter Guthrie Tait Road\\
Edinburgh\\
EH9 3FD\\
 United Kingdom\\
and School of Mathematics and Statistics, Beijing Institute of Technology, Beijing 100081, China}

\email{hiro.oh@ed.ac.uk}

\address{Yuzhao Wang, School of Mathematics, University of Birmingham, Watson Building, Edgbaston, Birmingham B15 2TT, United Kingdom}
\email{y.wang.14@bham.ac.uk}

\subjclass[2020]{35Q55, 37G05}

\keywords{nonlinear Schr\"odinger equation;
hyperbolic Schr\"odinger equation; normal form reduction; unconditional uniqueness;
Fourier--Lebesgue space}

\begin{abstract}

We study semilinear local  well-posedness 
of the two-dimensional periodic cubic hyperbolic nonlinear Schr\"odinger equation (HNLS) 
in Fourier--Lebesgue spaces.
By employing 
 the Fourier restriction norm method, 
we first establish sharp semilinear local well-posedness of HNLS
in  Fourier--Lebesgue spaces (modulo the endpoint case), including
almost scaling-critical Fourier--Lebesgue spaces.
Then, by adapting the normal form approach, developed
by the second author with Guo and Kwon (2013)
and 
by the second and third authors (2021), to the current hyperbolic setting,  
we  establish 
sharp 
unconditional uniqueness of HNLS
within the 
semilinear local well-posedness regime.
As a key ingredient to both results, 
we establish sharp counting estimates 
for the hyperbolic Schr\"odinger equation. 
As a byproduct of our analysis, 
we also obtain 
sharp 
unconditional uniqueness of the (usual) 
two-dimensional periodic cubic nonlinear Schr\"odinger equation
in Fourier--Lebesgue spaces for $p \ge 3$.

\end{abstract}

%\date{\today}

%%
%
\maketitle

\vspace*{-15mm}

\tableofcontents

\section{Introduction}
\subsection{Hyperbolic nonlinear Schr\"odinger equation}

We consider the following cubic hyperbolic nonlinear Schr\"odinger equation
(HNLS) on $\T^2 = (\R/2\pi \Z)^2$:\footnote{By convention, we endow
$\T^2$ with the normalized Lebesgue measure $ dx_{\T^2} =  (2\pi)^{-2}dx$
such that we do not need to carry factors involving $2\pi$.}
%For simplicity, we use $d\xx$ to denote the normalized Lebesgue measure on $\T^2$ in the following.}
\begin{equation}\label{HNLS1}
i\dt u+\Box u+  |u|^{2}u=0, 
\end{equation}

\noi
where  the hyperbolic Laplacian $\Box$ is given by 
$$
\Box = \Box_x = \dd_{x_1}^2-\partial_{x_2}^2
\quad \text{with } \ \ x =(x_1, x_2). 
$$

\noi
HNLS \eqref{HNLS1} appears  in the study of modulation of wave trains in gravity water waves;
(see, for example, \cite{Totz1, Totz2}), 
and its well-posedness issue and 
related problems have been studied extensively;
see
 \cite{KN, GT2012,godet2013lower, Wang, MT2015,BD2017,
DMPS, 
taira2020, saut2024, BSTW}.
 In \cite{Wang}, 
the third author proved sharp local well-posedness 
of the cubic HNLS \eqref{HNLS1} in $H^s(\T^2)$, $s > \frac12$
(missing the endpoint $s = \frac 12$).
In this paper, 
we extend this (essentially) sharp local well-posedness
result \cite{Wang}
on the $L^2$-based Sobolev spaces
to the more general Fourier--Lebesgue spaces
(Theorem \ref{THM:1}).
Here, given $s \in \R$ and $1 \le p \le \infty$, the Fourier--Lebesgue space $\F L^{s, p}(\T^2)$
is defined by the norm:
\begin{align*}
\| f\|_{\mathcal{F}L^{s,p}(\T^2)}=
\| \jb{n}^s \ft f(n) \|_{\l^p(\Z^2)}, 
\end{align*}

\noi
where $\jb{\,\cdot\,} = (1+|\,\cdot\,|^2)^\frac{1}{2}$, 
such that, when $ p = 2$, we have $\F L^{s, 2}(\T^2) = H^s(\T^2)$.
When $s= 0$, we set $\FL^p(\T^2) = \FL^{0, p}(\T^2)$.
Furthermore, we 
establish unconditional uniqueness
of solutions to \eqref{HNLS1}
by adapting the normal form approach
developed in~\cite{GKO, OW2}
to the current hyperbolic setting;
see Theorem~\ref{THM:2}.
We also establish sharp unconditional semilinear local well-posedness
of the associated normal form equation (see~\eqref{NF1a} below)
in the Fourier--Lebesgue spaces (modulo the endpoint cases); see Theorem~\ref{THM:3}.

\begin{remark}\label{REM:p1} \rm
When $p = 1$, the Fourier--Lebesgue space $\FL^{1}(\T^2)$ corresponds 
to the Wiener algebra.
As a result,  unconditional local well-posedness
of  \eqref{HNLS1} in $\F L^{s, 1}(\T^2)$ for $s\ge 0$
trivially follows from a contraction argument, using 
the unitarity of the hyperbolic Schr\"odinger propagator $e^{it \Box}$ in 
$\FL^{s, 1}(\T^2)$ 
and the algebra property of $\FL^{s, 1}(\T^2)$, 
without making use of any multilinear dispersion.
For this reason, 
we restrict our attention to $p > 1$
in the remaining part of this paper.
See also Remark \ref{REM:p12}.
\end{remark}

\medskip

Let us first consider the following (usual) cubic nonlinear Schr\"odinger equation on $\T^2$:
\begin{align}
i \partial_t u + \Delta u \pm  |u|^{2} u = 0, 
\label{NLS1}
\end{align}

\noi
where $\Dl = \dd_{x_1}^2 + \dd_{x_2}^2 $
denotes the usual (elliptic) Laplacian.
For clarity, we refer to \eqref{NLS1} as the elliptic NLS
in the following.
Both the cubic HNLS \eqref{HNLS1} and the cubic elliptic NLS \eqref{NLS1} on $\T^2$
enjoy the scaling symmetry,\footnote{In the current periodic setting, 
the scaling symmetry dilates the spatial domain but we ignore this issue here.} 
which induces the following scaling critical regularity
with respect to the Fourier--Lebesgue space $\F L^{s, p}(\T^2)$:
\begin{align*}
s_\text{crit}(p) = 1 - \frac 2p.
\end{align*}

\noi
In particular, when $p = 2$, we have $s_\text{crit}(2) = 0$.
In \cite{BO93}, Bourgain introduced the Fourier restriction norm method, 
based on the $X^{s, b}$-space (see \eqref{Xsb1} with $p = 2$), 
and 
proved {\it semilinear}\footnote{Namely, 
local well-posedness follows from a contraction argument,
which yields a smooth solution map.} 
local well-posedness of the cubic elliptic NLS \eqref{NLS1}
in $H^s(\T^2)$ for any $s > 0$.
A key ingredient 
is the following 
$L^4$-Strichartz estimate (with a slight derivative loss): 
\begin{align}
\| e^{it \Dl}  \P_N f\|_{L^4([0, 1]; L^4(\T^2))} \les N^s \|\P_N f\|_{L^2}
\label{Str1}
\end{align}

\noi
for any $s> 0$, 
where  $\P_N$ denotes  the frequency projector
 onto the frequencies $\{|n|\le N\}$.
We point out that 
this local well-posedness result is essentially sharp
in the sense that~\eqref{NLS1} is known to be ill-posed in $H^s(\T^2)$
for $s < 0$
(see~\cite{OH17, Kishi19}).
Moreover, 
when $s = 0$, 
\eqref{Str1}
fails  and  the cubic elliptic NLS \eqref{NLS1}
is semilinearly ill-posed in $L^2(\T^2)$; see \cite{BO93, TTz, Oh13, Kishi14}.
In fact, 
the well-posedness issue of the cubic elliptic NLS \eqref{NLS1} 
in the critical space $L^2(\T^2)$ remains
a challenging open problem.
See \cite[Remark~4.3]{Oh13} and \cite[Section~3]{Kishi14}
(see also~\cite{TTz})
for a sharp lower bound
and  a recent breakthrough work~\cite{HK1} by Herr and Kwak for a sharp upper bound
for the $L^4$-Strichartz estimate on $\T^2$.
We also mention another  recent 
breakthrough work~\cite{HK2} by Herr and Kwak
on global well-posedness of 
the cubic elliptic  NLS~\eqref{NLS1} 
in $H^s(\T^2)$, $s > 0$, for initial data of arbitrary size
in the defocusing case (with the $-$ sign in \eqref{NLS1})
and 
 below the ground state threshold in the focusing case
 (with the $+$ sign in \eqref{NLS1}).

Given $\mu \in \Z$ and $n \in \Z^2$, define 
$ \G_{\mu}^\pm (n)$ by 
\begin{align}
\label{GG0}
\G_{\mu}^\pm (n)=
\big\{({ n}_1,{  n}_2,{ n}_3) \in\G(n): |{  n}|^2_\pm -|{ n}_1|^2_\pm +|{ n}_2|^2_\pm -|{ n}_3|^2_\pm =\mu\big\}, 
\end{align}

\noi
where the elliptic\,/\,hyperbolic modulus of a vector ${n} = (j,k) \in \Z^2$ is defined as
\begin{align}
|{  n} |_\pm^2 =  j^2 \pm k^2
\label{GG0a}
\end{align}

\noi
and $\G( n)$ is given by 
\begin{align}
\G(n)=\big\{(n_1,n_2, n_3) \in(\Z^2)^3: {  n}_1-{  n}_2+{  n}_3= {  n}, 
\ n \ne n_1, n_3\big\}.
\label{GG0b}
\end{align}

\noi
When $\mu = 0$, 
$ \G_0^\pm (n)$ corresponds to the (non-trivial) resonant frequencies
for \eqref{NLS1} and \eqref{HNLS1}
and we set 
\begin{align}
\label{GG1}
    \G_{\text{res}}^\pm ({ n})=
 \G_0^\pm (n).
\end{align}

\noi
Then, the proof of the $L^4$-Strichartz estimate \eqref{Str1}
reduces to bounding the size of 
$\G_{\mu}^+ (n)$, uniformly in $\mu \in \Z$ and $n \in \Z^2$, 
which follows from the divisor counting on the Gaussian integers $\Z[i]\cong \Z^2$.

Let us turn our attention to the cubic HNLS \eqref{HNLS1}.
In this case, 
the main new difficulty comes from the hyperbolic nature of the problem. 
Namely, in the hyperbolic case, the hyperbolic modulus $|n|_-$ in \eqref{GG0a}
is not sign-definite, which makes
the resonant set $\G_\text{res}^-(n)$ much larger.
%the counting problem for 
%$\G_{\mu}^- (n)$ much more difficult.
Nonetheless, in \cite{Wang}, the third author 
 proved  semilinear local well-posedness of \eqref{HNLS1} in $H^s(\T^2)$ for $s>\frac{1}{2}$ 
 by establishing the following {\it sharp} $L^4$-Strichartz estimate 
 for  the hyperbolic Schr\"odinger equation:
 \begin{align*}
\| e^{it\Box}\P_N f\|_{L^4_{t,x}(\T^3)}\lesssim N^{\frac{1}{4}}\| \P_N f \|_{L^2}, 
 \end{align*}

\noi
where  the $\frac 14$-derivative loss is necessary, which is much worse
as compared to the elliptic case~\eqref{Str1}.
By exploiting the hyperbolic nature of the resonant set $\G_{\rm res}^-(n)$ in \eqref{GG1},
he also showed that 
the cubic HNLS~\eqref{HNLS1} on $\T^2$ is semilinearly
ill-posed in $H^s(\T^2)$ for $s < \frac 12$.
This is in sharp contrast with the corresponding problem on 
$\R^2$ and on the product space $\R\times \T$, 
where \eqref{HNLS1} on $\R^2$ 
(and $\R\times \T$, respectively)
is semilinearly locally well-posed in $H^s$
for $s \ge 0$ (and $s > 0$, respectively), 
essentially matching the elliptic NLS well-posedness theory
and the prediction given by the scaling heuristics (recall $s_\text{crit}(2) = 0$); 
see \cite{BSTW}
for a further discussion.

We now state our first result on 
semilinear local well-posedness of \eqref{NLS1} in 
the Fourier--Lebesgue spaces.

 \begin{theorem}\label{THM:1}

 Let $1<  p<\infty$.
 
 \smallskip

\noi
\textup{(i)}
Let    $s>1-\frac{1}{p}$.
Then,   
  the cubic HNLS \eqref{HNLS1} on $\T^2$ is semilinearly locally well-posed in $\mathcal{F}L^{s,p}(\T^2)$.

\smallskip

\noi
\textup{(ii)}
Let 
 $s< 1-\frac{1}{p}$.
Then,   
  the cubic HNLS \eqref{HNLS1} on $\T^2$  is semilinearly ill-posed in $\mathcal{F}L^{s,p}(\T^2)$. 
More specifically,      
 the solution map for \eqref{HNLS1} on $\F L^{s, p}(\T^2)$ fails to be  $C^3$
 for $s< 1-\frac{1}{p}$.
\end{theorem}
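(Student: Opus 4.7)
For part (i), my plan is to run the Fourier restriction norm method in the $\F L^p$-variant of Bourgain's space adapted to the hyperbolic propagator $e^{it\Box}$, namely
\[
\|u\|_{X^{s, b, p}} := \bigl\| \langle n\rangle^s \langle \tau - |n|_-^2\rangle^b \widetilde u(\tau, n)\bigr\|_{\ell^p_n L^{p'}_\tau}, \qquad b = \tfrac 12+.
\]
Standard linear and transfer estimates for this space carry over without modification from the $L^2$-based theory, so semilinear local well-posedness of \eqref{HNLS1} (with an analytic data-to-solution map) reduces to the trilinear estimate
\[
\|u_1 \overline{u_2} u_3\|_{X^{s, b-1, p}} \lesssim \prod_{j=1}^{3} \|u_j\|_{X^{s, b, p}}
\]
for $s > 1 - \tfrac 1p$. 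Dualising against $X^{-s, 1-b, p'}$ and expanding on the Fourier side, one exploits the algebraic identity
\[
(\tau - |n|_-^2) - \sum_{j = 1}^{3}(-1)^{j+1}(\tau_j - |n_j|_-^2) = -\mu, \qquad \mu = |n|_-^2 - |n_1|_-^2 + |n_2|_-^2 - |n_3|_-^2,
\]
which forces at least one modulation weight $\langle \tau_j - |n_j|_-^2\rangle$ to be $\gtrsim \langle \mu\rangle$. After a dyadic decomposition in $\mu$ and a H\"older/Cauchy--Schwarz step in the time variables, the estimate reduces to a weighted $\ell^p$-bound over $(n_1, n_2, n_3) \in \G_\mu^-(n)$; feeding in the sharp hyperbolic counting estimates (to be established later in the paper) closes the trilinear estimate precisely at $s > 1 - \tfrac 1p$.

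\medskip

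The main obstacle for part (i) is the indefiniteness of $|\,\cdot\,|_-^2$: the level sets $\{|n|_-^2 = c\}$ are unbounded hyperbolae rich in lattice points, so the Gaussian-integer divisor bound that underlies the elliptic counting is unavailable and must be replaced by a genuinely hyperbolic count. It is exactly this deficit that lifts the semilinear threshold from the scaling value $s_{\text{crit}}(p) = 1 - \tfrac 2p$ up to $s = 1 - \tfrac 1p$, and tracking this loss sharply through the dyadic decomposition -- without any logarithmic slack -- is the delicate point.

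\medskip

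For part (ii), I would implement the standard Bourgain $C^3$-failure scheme by exhibiting a family $\{u_0^{(N)}\}_{N \ge 1}$ with $\|u_0^{(N)}\|_{\F L^{s,p}} \lesssim 1$ for which the third Picard iterate
\[
u_3^{(N)}(t) = -i \int_0^t e^{i(t - t') \Box} \bigl(|e^{it'\Box} u_0^{(N)}|^2 \, e^{it'\Box} u_0^{(N)}\bigr) \, dt'
\]
satisfies $\|u_3^{(N)}(t)\|_{\F L^{s,p}} \to \infty$ as $N \to \infty$, for any fixed $t > 0$ and any $s < 1 - \tfrac 1p$. Expanding on the Fourier side gives
\[
\widehat{u_3^{(N)}}(t, n) = \sum_{(n_1, n_2, n_3) \in \G(n)} \widehat{u_0^{(N)}}(n_1) \overline{\widehat{u_0^{(N)}}(n_2)} \widehat{u_0^{(N)}}(n_3) \cdot \frac{e^{it\mu} - 1}{i\mu},
\]
with $\mu$ as above, and this reduces to a factor of $t$ on the resonant set $\G_{\text{res}}^-(n)$. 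I would choose $\widehat{u_0^{(N)}}$ to be a normalised indicator on a set $S_N \subset \Z^2$ of $\sim N$ frequencies of size $\sim N$ concentrated along a suitable hyperbola on which $|\,\cdot\,|_-^2$ is essentially constant, so that $|\G_{\text{res}}^-(n_\star) \cap S_N^3|$ saturates the hyperbolic resonance count at some target output frequency $n_\star$. The main obstacle is to calibrate $S_N$ (and the $\ell^p$ mass of the output support) so that the resulting lower bound on $\|u_3^{(N)}(t)\|_{\F L^{s,p}}$ matches the sharp trilinear upper bound from part (i) precisely at $s = 1 - \tfrac 1p$; a more concentrated or more spread configuration would only yield a strictly weaker ill-posedness range.
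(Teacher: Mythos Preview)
Your overall strategy for both parts matches the paper's, but two points need sharpening.

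\textbf{Part (i).} The paper likewise runs the Fourier restriction norm method in an $\FL^p$-adapted Bourgain space and reduces the trilinear estimate to the hyperbolic counting bound, so your plan is on target. However, your specific space is miscalibrated: with the $\ell^p_n L^{p'}_\tau$ norm and $b = \tfrac12+$, the embedding $X^{s,b,p} \hookrightarrow C_t \FL^{s,p}$ requires $bp > 1$, which fails for small $1 < p < 2$. The paper instead takes the norm $\|\langle n\rangle^s \langle \tau + |n|_-^2\rangle^b \widehat u\|_{\ell^p_n L^p_\tau}$ with $b = 1 - \tfrac1p + \varepsilon$ (so that the embedding condition $bp' > 1$ holds uniformly in $1<p<\infty$), and proves the trilinear estimate with modulation exponents $1 - \tfrac1p + \varepsilon$ on the right and $-\tfrac1p + 2\varepsilon$ on the left. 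The reduction to counting is via a direct H\"older step (a $\sup_{n,\tau}$ of a weighted sum of $M^{p'}$) together with a calculus lemma for the $\tau$-convolutions, rather than a dyadic decomposition in $\mu$; the counting lemma then closes the sum exactly when $sp' > 1$, i.e.\ $s > 1 - \tfrac1p$. The paper also splits off the contributions with $n = n_1$ or $n = n_3$ and handles them by elementary embeddings --- routine, but worth flagging since your sketch omits it.

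\textbf{Part (ii).} You have the right scheme, and your instinct to place the data where $|\cdot|_-^2$ is constant is correct --- but the paper makes a specific choice that dissolves the calibration obstacle you raise. Take $S_N$ to be the \emph{diagonal} segment $\{(k,k): |k| \lesssim N\}$, the null line of the hyperbolic symbol. There $|n|_-^2 = 0$ identically, so $S(t)f_N = f_N$ for all $t$, and \emph{every} cubic interaction among diagonal frequencies is exactly resonant ($\mu \equiv 0$). The third Picard iterate is then literally $it \cdot |f_N|^2 f_N$ with no oscillatory loss, and a two-parameter count gives $|\widehat{A[f_N]}(t,n)| \sim t\, N^{2 - 3s - 3/p}$ for each of the $\sim N$ diagonal output frequencies, whence $\|A[f_N](t)\|_{\FL^{s,p}} \gtrsim t\, N^{2 - 2s - 2/p}$. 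This diverges precisely when $s < 1 - \tfrac1p$, hitting the well-posedness threshold on the nose with no tuning required.
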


Theorem \ref{THM:1} extends
the well-\,/\,ill-posedness result of \eqref{HNLS1} in \cite{Wang}
from the $L^2$-based Sobolev space $H^s(\T^2)$
to the general Fourier--Lebesgue setting, 
yielding sharp semilinear 
well-\,/\,ill-posedness results
(modulo the endpoint case).
When $p = 2$ (already studied in~\cite{Wang}), there is a regularity gap by $\frac 12$ to the scaling-critical regularity
$s_\text{crit}(2) = 0$.
However, by taking $p \to \infty$, 
Theorem \ref{THM:1}\,(i)
yields local well-posedness of \eqref{HNLS1}
in almost critical Fourier--Lebesgue spaces
in the sense similar to \cite{GV}.
We also note that, for $1 <  p < \infty$ and $s > 1 - \frac 1p$, 
we have 
\begin{align}
 \F L^{s, p}(\T^2) \subset L^2(\T^2).
 \label{crit2}
\end{align}

We prove Theorem \ref{THM:1}\,(i) 
by employing the Fourier-restriction norm method adapted to the Fourier--Lebesgue spaces
\cite{GH, FOW, C1, C2} 
and 
by reducing the matter 
to the hyperbolic counting estimate
(Lemma \ref{LEM:count1}).
As such, the uniqueness of the local-in-time solution
constructed in Theorem \ref{THM:1}\,(i)
holds only in the class:\footnote{In view of the time reversibility of \eqref{HNLS1}, 
we only consider positive times in the remaining part of this paper.}
\begin{align}
C([0,T];\mathcal{F}L^{s,p}(\T^2))\cap X^{s,b}_{p}(T), 
\label{crit3}
\end{align}

\noi
where $T = T(\|u(0)\|_{\F L^{s, p}}) > 0$ denotes the local existence time
and 
$X^{s,b}_{p}(T)$ is as in \eqref{Xsb2} (with some appropriate $b$).
Theorem \ref{THM:1}\,(ii)  follows
from a straightforward modification of the argument in \cite{Wang}.
We present a proof of Theorem \ref{THM:1} in Appendix \ref{SEC:A}.

\begin{remark}\label{REM:p12}\rm
Let  $p = 1$.
Then,  as pointed out in 
Remark \ref{REM:p1}, 
Theorem \ref{THM:1}\,(i) holds for $s \ge 0$
(including the endpoint case $s = 0$).
Similarly, Theorems \ref{THM:2}
and  \ref{THM:3}\,(i) 
hold for $s \ge 0$.
On the other hand, 
Theorem \ref{THM:1}\,(ii)
and~\ref{THM:3}\,(ii)
 hold for $s  < 0$;
 see  Subsection \ref{SUBSEC:A2}.

\end{remark}

\subsection{Unconditional uniqueness}

In \cite{Kato}, Kato introduced the following notion of unconditional uniqueness.
We say that a Cauchy problem is unconditionally (locally) well-posed in 
a given Banach space $B$  if for every initial data
$u_0 \in B$, there exist $T > 0$ and a unique solution $u \in C([0,T];B)$
with $u|_{t = 0} = u_0$, 
where the uniqueness holds in the entire class 
$C([0,T];B)$
without intersecting with any auxiliary function space 
(such as the $X^{s, b}$-spaces).
We refer to such uniqueness 
as unconditional uniqueness. Unconditional uniqueness is a concept of uniqueness which does not depend on how solutions are constructed, 
and thus is of fundamental importance.
As such, 
unconditional uniqueness for dispersive equations
 has been studied extensively
by various methods;
see, for example, 
 \cite{Furioli, yin, GKO, Han, 
 KOPV, Chen, HV, OW2, Kishi21, 
ChenHolmer2,  ChenShenZhang}.

As mentioned above, the construction of solutions
in Theorem \ref{THM:1}\,(i)
relies on the Fourier restriction norm method 
adapted to the Fourier--Lebesgue spaces.
Thus, uniqueness
holds only conditionally, i.e.~in the restricted class \eqref{crit3}.
In this subsection, we first investigate
 unconditional uniqueness for the cubic HNLS \eqref{HNLS1} on $\T^2$.
In order to make sense of the cubic nonlinearity for a fixed time, 
we need $u(t)$ to belong to $L^3(\T^2)$.
Recall  the following  (essentially) sharp embeddings:

\smallskip

\begin{itemize}
\item[(a)]
By Hausdorff-Young's and Sobolev's inequalities, we have 
\begin{align}
\F L^{s, p}(\T^2) \subset W^{s, p'} (\T^2) \subset L^3(\T^2)
\label{emb0}
\end{align}

\noi
for 
\begin{align}
\frac 32 \le p \le 2 \quad \text{and} \quad s \ge \frac 43 - \frac 2p, 
\label{emb0a}
\end{align}

\noi
where  $W^{s, p}(\T^2)$ denotes the $L^p$-based Sobolev space.
 Here, $p'$ denotes the H\"older conjugate of $p$.
\medskip

\item[(b)]
Under the conditions:
\begin{align}
\text{(b.i)} \ \ 1\leq p\leq \frac{3}{2}, \  s\ge 0\qquad \text{or}
\qquad \text{(b.ii)}\ \ p>2, \  s>\frac{4}{3}-\frac{2}{p},
\label{emb2}
\end{align}

\noi 
we have 
\begin{align}
   \mathcal{F}L^{s,p}(\T^2)\subset \mathcal{F}L^{\frac{3}{2}}(\T^2)\subset L^{3}(\T^2), 
\label{emb1}
\end{align}

\noi
where the first embedding in \eqref{emb1} 
under the condition (b.ii) follows from H\"older's inequality.

\end{itemize}

%\begin{align}
%H^\frac 13(\T^2) \subset L^{3}(\T^2)
%\label{embed0}
%\end{align}

\noi
Thus, 
we see that the  the Fourier--Lebesgue
regularity \eqref{emb0a} or  \eqref{emb2}
is necessary
for establishing unconditional uniqueness
of \eqref{HNLS1}.

We now state our second result
on unconditional uniqueness of \eqref{HNLS1}.

\begin{theorem}\label{THM:2}
Let $s \in \R$ and $1 <  p < \infty$ satisfy
one of the following conditions\textup{:}
\begin{align}
\begin{split}
\textup{(i)}&  \ \ 
s>1-\frac{1}{p}, \ \ 
\, \, \text{if } \ 1 <  p \le 3, \\
\textup{(ii)}&  \ \ 
 s>\frac{4}{3}-\frac{2}{p}, \ \ 
 \text{if } \ 3 \le p < \infty.
\end{split}
\label{emb3}
 \end{align}

\noi
Then, 
the cubic HNLS \eqref{HNLS1} on $\T^2$ is unconditionally locally well-posed
in $\F L^{s, p}(\T^2)$.
\end{theorem}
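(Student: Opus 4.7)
The plan is to carry out an infinite iteration of the normal form reduction of \cite{GKO, OW2}, adapted to the hyperbolic phase. Under the hypothesis \eqref{emb3}, the embeddings \eqref{emb0}--\eqref{emb1} give $\F L^{s,p}(\T^2) \subset L^3(\T^2)$, so the cubic nonlinearity $|u|^2u$ is defined pointwise in time for any $u \in C([0,T]; \F L^{s,p})$ and the Duhamel formulation makes sense on the Fourier side. Setting $v(t) = e^{-it\Box} u(t)$, one has
\begin{align*}
\ft v(n,t) = \ft u_0(n) + i \int_0^t \sum_{(n_1,n_2,n_3)\in\G(n)} e^{-is\Phi^-(\bar n)}\ft v(n_1,s)\,\overline{\ft v(n_2,s)}\,\ft v(n_3,s)\,ds
\end{align*}
plus a contribution from the resonant set, where $\Phi^-(\bar n) = |n|_-^2 - |n_1|_-^2 + |n_2|_-^2 - |n_3|_-^2$.

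First, I would split the frequency sum into the resonant part $\G^-_{\mathrm{res}}(n)$ and the non-resonant part. The resonant contribution is absorbed by a gauge transformation renormalizing the linear propagator (twisting by the $L^2$-mass-type quantities along the resonant diagonal), after which the remaining sum runs over $\mu \neq 0$. On this non-resonant piece I integrate by parts in $s$, using $\tfrac{d}{ds}(e^{-is\Phi^-}/(-i\Phi^-)) = e^{-is\Phi^-}$: this produces two boundary terms at $s=0,t$ carrying a factor $1/\Phi^-$, plus a quintilinear-in-$v$ term obtained by substituting the equation for $\dt v$ into the remaining time integral. I iterate this procedure; at the $k$-th step one gets a $(2k{+}1)$-multilinear term in $v$ carrying $k$ modulation factors $\prod_{j=1}^{k} 1/\Phi^-_{(j)}$, together with new boundary contributions at each level.

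Second, the sharp hyperbolic counting estimate (Lemma \ref{LEM:count1}) provides uniform control on $|\G^-_\mu(n)|$, which combined with an $\ell^p$-H\"older argument yields pointwise-in-$n$ bounds on each multilinear term by a product of $\F L^{s,p}$ norms of $v$ times a scalar quantity summable precisely in the regularity regime \eqref{emb3}. Assembling the pieces produces a closed rewriting
\begin{align*}
\ft v(n,t) = \ft u_0(n) + \sum_{k\ge 1} \mathcal{N}^{\mathrm{bdry}}_{2k+1}(v)(n,t) + \mathcal{R}(v)(n,t),
\end{align*}
where each multilinear map is continuous on $C([0,T];\F L^{s,p})$, with a $T^{\theta}$ smallness when restricted to the boundary-in-time contributions. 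For two solutions $u_1,u_2 \in C([0,T];\F L^{s,p})$ with the same initial data, subtracting this identity for $v_1$ and $v_2$ and applying a multilinear difference estimate yields
\begin{align*}
\sup_{t\in[0,T']}\|v_1(t) - v_2(t)\|_{\F L^{s,p}} \le C(R)\,(T')^{\theta}\sup_{t\in[0,T']}\|v_1(t) - v_2(t)\|_{\F L^{s,p}}
\end{align*}
with $R = \max_j \|v_j\|_{L^\infty_t \F L^{s,p}_x}$; taking $T'$ small forces $v_1 \equiv v_2$ on $[0,T']$, and iterating covers $[0,T]$.

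The main obstacle is the hyperbolic nature of the phase. Since $\Phi^-$ factors as a product of two linear forms rather than being a definite quadratic form, the set $\G^-_\mu(n)$ is significantly larger than its elliptic counterpart and $\G^-_{\mathrm{res}}(n)$ is far from a diagonal. Consequently the modulation gain $1/|\Phi^-|$ at each step of the normal form reduction must be balanced against a weaker counting bound, and convergence of the multilinear series down to the sharp threshold \eqref{emb3} is delicate: it forces one to use the sharp hyperbolic counting of Lemma \ref{LEM:count1} together with careful bookkeeping of the nested level-$k$ modulation sums $\sum_{\mu_1,\dots,\mu_k}|\mu_1\cdots\mu_k|^{-1}$ in conjunction with frequency-dependent counting. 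Replacing the elliptic divisor argument by this hyperbolic analogue, and ensuring the iteration still closes in $C([0,T];\F L^{s,p})$ in the full range \eqref{emb3}, constitutes the technical heart of the proof.
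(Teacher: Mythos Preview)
Your overall strategy---an infinite Poincar\'e--Dulac normal form iteration in the spirit of \cite{GKO,OW2}, with the hyperbolic counting lemma replacing the elliptic divisor bound---is the paper's approach. However, two concrete steps in your outline would fail as written.

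First, the claim that the resonant contribution can be ``absorbed by a gauge transformation'' is a one-dimensional artifact. In one dimension the relation $\Phi=0$ forces $n\in\{n_1,n_3\}$, so the resonant part reduces to an $L^2$-mass twist plus a diagonal cubic. In two dimensions this is false: the set $\{\Phi^-(\bar n)=0,\ n\notin\{n_1,n_3\}\}$ is genuinely large (already in the elliptic case it is $(n-n_1)\cdot(n-n_3)=0$, and hyperbolically it is larger still; see the remark following \eqref{phi1}), and it cannot be removed by any phase rotation. The paper does not attempt to gauge it away. It separates only the \emph{trivial} resonant piece $\Rf(u)$ (the terms with $n\in\{n_1,n_3\}$, handled by Lemma~\ref{LEM:R1}) and places all remaining frequencies, including the nontrivial resonances, into a \emph{nearly resonant} set $A_j=\{|\widetilde\mu_j|\le((2j+1)K)^{4p}\}$, which is estimated directly via Lemma~\ref{LEM:count1}.

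Second, and relatedly, splitting merely at $\mu=0$ versus $\mu\ne 0$ does not produce a convergent multilinear series. After $j$ integrations by parts the ordered-tree count contributes a factor $(2j-1)!!$ (see \eqref{cj1}); with a fixed threshold the modulation denominators yield only a bounded constant per generation, so the series behaves like $\sum_j C^j(2j-1)!!$ and diverges. The convergence mechanism in the paper is the \emph{growing} cutoff $((2k+1)K)^{4p}$ at generation $k$, which produces the factorial decay $((2j-1)!!)^{-4}K^{4(1-j)}$ in \eqref{eta8} and beats the tree count. Note also that the denominators arising from integration by parts are the \emph{cumulative} phases $\widetilde\mu_k=\sum_{\ell\le k}\mu_\ell$, not the individual $\mu_k$, so your expression $\sum|\mu_1\cdots\mu_k|^{-1}$ is not what appears. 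Finally, the hypothesis \eqref{emb3} is not what makes the normal form series converge (that needs only $s>1-\tfrac1p$; cf.\ Theorem~\ref{THM:3}); its role is to ensure $\F L^{s,p}\subset L^3$ so that the differentiation-by-parts manipulations (Remark~\ref{REM:just1}) and the vanishing of the remainder (Lemma~\ref{LEM:N2j}) are justified for an arbitrary $C_t\F L^{s,p}_x$ solution, which is what delivers \emph{unconditional} uniqueness.
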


Theorem \ref{THM:2} establishes the first 
unconditional uniqueness result
for hyperbolic nonlinear Schr\"odinger equations.
By comparing \eqref{emb2} and \eqref{emb3}, 
we see that Theorem \ref{THM:2}
yields  sharp unconditional uniqueness 
of the cubic HNLS \eqref{HNLS1} on $\T^2$, 
when  $p \ge 3$.
On the other hand, 
when $1 \le p \le 3$, 
the regularity condition $s > 1 - \frac 1p$ in \eqref{emb3}
corresponds to 
the sharp regularity threshold
for semilinear local well-posedness of \eqref{HNLS1}
stated in Theorem \ref{THM:1}
(modulo the endpoint $s = 1- \frac 1p$).
Namely, 
the unconditional uniqueness result in Theorem~\ref{THM:2} is {\it sharp}
within the semilinear local well-posedness regime.
We prove Theorem \ref{THM:2}
by adapting 
 an infinite iteration
of Poincar\'e-Dulac normal form reductions, 
originally introduced 
\cite{GKO} by the second author with Guo and Kwon
and further developed by the second and third authors in \cite{OW2}, 
to the current hyperbolic setting.
More precisely, by iteratively applying normal form reductions, 
we transform the cubic HNLS \eqref{HNLS1}
to the normal form equation \eqref{NF1a}
which already encodes multilinear dispersive smoothing.
In the current hyperbolic setting, however, 
the counting estimate for $\G_\mu^-(n)$ in \eqref{GG0}
becomes much worse as compared to the elliptic case, 
which makes our analysis of the multilinear operators, 
appearing in the normal form equation \eqref{NF1a}, 
more involved than those in \cite{GKO, OW2};
see Remark \ref{REM:diff1}.

%Theorem \ref{THM:2}
%on the unconditional uniqueness of solutions to the original cubic HNLS \eqref{HNLS1}
%follows as an immediate corollary to Theorem \ref{THM:3}, 
%once we observe that the $L^3_x$-regularity suffices
%to show the equivalence of 
% the original cubic HNLS \eqref{HNLS1} and the normal form equation~\eqref{NF1}.
%Since this part of the argument is standard, we omit details;
%see \cite{GKO, OW2}.

\begin{remark}\rm 
In  \cite{Kishi21}, 
Kishimoto 
proved  unconditional uniqueness of 
the cubic elliptic NLS~\eqref{NLS1} in $H^s(\T^2)$
for 
$s > \frac 25$.
As we point out in Remark \ref{REM:sharp}, 
 our counting estimate 
for the hyperbolic Schr\"odinger equation
(Lemma~\ref{LEM:count1})
also holds 
for the elliptic  Schr\"odinger equation.
As a consequence, Theorem \ref{THM:2} also holds for the cubic elliptic NLS \eqref{NLS1} on $\T^2$
with the same range,
thus yielding sharp unconditional uniqueness
for  the cubic elliptic NLS~\eqref{NLS1} 
 in $\F L^{s, p}(\T^2)$ for $p \ge 3$.

\end{remark}

\subsection{Normal form approach}

A normal form method 
refers to an approach, where one applies
 a sequence of transforms to reduce dynamics to the essential part, 
corresponding to the (nearly) resonant part.
A general procedure is given as follows:

\smallskip

\begin{itemize}
\item[(1)]
Separate the nonlinear part into (nearly) resonant and (highly) non-resonant parts, 

\smallskip
\item[(2)]
``Eliminate'' the non-resonant part
at the expense of  introducing higher order terms, 

\smallskip
\item[(3)]
Repeat this process indefinitely (or terminate it  at some finite step).

\end{itemize}

\noi
There are different approaches
to 
``eliminate'' the non-resonant part
in the second step.
 The commonly used normal form methods
are the Birkhoff normal form reductions (see, for example,  \cite{BG, CKO}
in the PDE context)
and the Poincar\'e-Dulac normal form reductions.
In this paper, we 
implement an infinite iteration of 
Poincar\'e-Dulac normal form reductions, 
developed in \cite{GKO, OW2}, 
to transform the cubic HNLS \eqref{HNLS1}
into a normal form equation; see~\eqref{NF1a} below.

By writing \eqref{HNLS1} in 
 the Duhamel formulation, we have 
\begin{align}
u(t) = S(t) u(0) + i \int_0^t S(t - t')\big\{  \Nf(u) (t')
+ \Rf(u)(t')\big\}
 dt'
\label{Duha1}
\end{align}	

\noi
where $S(t) = e^{it \Box}$
and we split the nonlinearity $|u|^2 u$ into two parts, 
$\Nf(u) $ and $\Rf(u) $,  
%the {\bf \Rd WRONG} non-resonant and resonant parts of the nonlinearity, 
%respectively, 
defined by
\begin{align}
\begin{split}
\F_x\big(\Nf(u)\big)(n) 
&  = \sum_{\substack{n = n_1 - n_2 + n_3\\ n\neq n_{1},n_{3}}}
\ft u (n_1) \cj{\ft u (n_2)} \ft u (n_3), \\
\Rf(u) & = |u|^2 u - \Nf(u).
\end{split}
\label{non0}
\end{align}

\noi
Here, $\F_x$ denotes the spatial Fourier transform.
Define the following trilinear operators:
\begin{align}
\begin{split}
\F_x\big(\Nf(u_1,u_2,u_3)\big)(n) 
& = \sum_{\substack{n = n_1 - n_2 + n_3\\ n\neq n_{1},n_{3}}}
\ft u_1 (n_1) \cj{\ft u_2 (n_2)} \ft u_3 (n_3), \\
\F_x\big(\Rf_1(u_1,u_2,u_3)\big) (n) 
& = - 
\ft u_1 (n) \cj{\ft u_2 (n)} \ft u_3 (n), \\
\F_x\big(\Rf_2(u_1,u_2,u_3)\big) (n) 
& = 
 \bigg(\sum_{m \in \Z^2} \ft u_1 (m) \cj{\ft u_2 (m)}\bigg) \ft u_3 (n)\\
& \quad  +  \bigg(\sum_{m \in \Z^2} \ft u_3 (m) \cj{\ft u_2 (m)}\bigg) \ft u_1 (n).
\end{split}
\label{non1}
\end{align}

\noi
With the short-hand notation 
\eqref{short1}, 
we have $\Nf(u) = \Nf(u, u, u)$.
Moreover, from \eqref{non0} and~\eqref{non1}, 
we have
\begin{align}
\begin{split}
|u|^2 u 
& = \Nf(u) + \Rf(u) \\
& = \Nf(u) + \Rf_1(u)  + \Rf_2(u).
\end{split}
\label{non2}
\end{align}

\noi
See also \eqref{tri1a}.

\begin{remark}\rm
Let $\uu$ denote
the interaction representation of $u$, defined
by 
\begin{align*}
\uu (t) = S(-t)u(t).
\end{align*}

\noi
Then, in terms of $\uu$, the cubic HNLS \eqref{HNLS1} is expressed as\footnote{The 
operator $\NN^{(1)}$ is non-autonomous but, 
for simplicity of presentation, we suppress its $t$-dependence.}
\begin{equation}\label{HNLS2}
\begin{aligned}
\partial_t\ft \uu (n)&=i\sum_{\substack{n=n_1-n_2+n_3\\n_2\neq n_1, n_3}}
e^{it\Phi (\bar n) }\widehat{\uu  }(n_1)\overline{\widehat{\uu  }(n_2)}\widehat{\uu  }(n_3)
+ i \F_x \big(\Rf(\uu)\big)(n) \\
&=: \F_x\big(\NN^{(1)}(\uu  )\big)(n)+\F_x\big(\RR^{(1)}(\uu  )\big)(n), 
  \end{aligned}  
\end{equation}

\noi
where $\Rf(\uu)$ is as in \eqref{non0} (see also \eqref{non1} and \eqref{non2})
and 
the modulation function $\Phi(\bar n)$ is given by 
\begin{align}\label{phi1}
\Phi(\bar n)=\Phi(n,n_1,n_2,n_3) = |n|_-^2-|n_1|_-^2+|n_2|_-^2-|n_3|_-^2.
\end{align}

\noi
On the Fourier support of $\RR^{(1)}(\uu)$ (and $\Rf(u)$), we have 
$n = n_1$ or $n_3$, and thus 
$\Phi(\bar n) = 0$.
In the one-dimensional case, 
$\Rf(u)$ indeed corresponds to the resonant part (i.e.~$\Phi(\bar n) = 0$)
of the nonlinearity $|u|^2 u$.
It is, however, not the case in the higher dimensional setting.
For the elliptic cubic NLS
\eqref{NLS1} on $\T^2$, the modulation function $\Phi(\bar n)$
(where we replace the hyperbolic modulus $|\cdot|_-$
in \eqref{phi1}
by the elliptic  modulus $|\cdot|_+$)
can be written as
\begin{align*}
\Phi(\bar n)& = |n|_+^2-|n_1|_+^2+|n_2|_+^2-|n_3|_+^2\\
& = 2 (n - n_1)\cdot (n - n_3).
\end{align*}

\noi
Namely, $\Phi(\bar n) = 0 $ when the vectors
$n - n_1$ and $n - n_3$ are perpendicular.
In view of \eqref{GG0a}, 
the resonant set for the cubic HNLS \eqref{HNLS1}, 
where $\Phi(\bar n) = 0$ vanishes, 
is even larger, which makes the hyperbolic problem much harder.
In the following, 
we refer to  $\Rf(u)$ (and $\RR^{(1)}(\uu)$) 
as  the ``trivial'' resonant part.

\end{remark}

\medskip

Following \cite{GKO, OW2}, 
we implement an infinite iteration of normal form reductions
and derive the following {\it normal form equation}:
 \begin{align}
\begin{split}
u(t) 
 =   S(t) u(0) 
&  +     \sum_{j = 2}^\infty  \Nf_0^{(j)}(u)(t) 
 -  \sum_{j = 2}^\infty \Nf_0^{(j)}(u)(0)  \\
& %\hphantom{X}
+ \int_0^t S(t - t ') \bigg\{
\sum_{j = 1}^\infty \Nf_1^{(j)}(u)(t')   + \sum_{j = 1}^\infty \Rf^{(j)}(u)(t')\bigg\} dt',
\end{split}
\label{NF1a}
\end{align}

\noi
where $\{\Nf_0^{(j)}\}_{j=2}^{\infty}$ are autonomous  $(2j-1)$-linear operators
while  $\{\Nf_{1}^{(j)}\}_{j=1}^{\infty}$
and $\{\Rf^{(j)}\}_{j=1}^{\infty}$
 are  autonomous  $(2j+1)$-linear operators.
See Section \ref{SEC:NF1} for details.
In particular, see \eqref{NF1b} for the precise definitions
of these multilinear operators.
As compared to the original Duhamel formulation \eqref{Duha1}, 
the normal form equation \eqref{NF1a}
may look more complicated from the algebraic viewpoint.
However, as observed in \cite{GKO, OW2}, the multilinear operators 
$\Nf_0^{(j)}$, $\Nf_1^{(j)}$, and $\Rf^{(j)}$
appearing 
in \eqref{NF1a} already
encode multilinear dispersive smoothing.
As a result, the normal form equation \eqref{NF1a} 
 exhibits better analytical properties than 
the original Duhamel formulation \eqref{Duha1}, 
and 
we can easily prove sharp semilinear local well-posedness
of the normal form equation \eqref{NF1a}
in $\FL^{s, p}(\T)$ by a simple contraction argument
{\it without} any auxiliary function spaces.

\begin{theorem}\label{THM:3}
  Let $1<  p<\infty$.

\smallskip

 \noi
\textup{(i)}
Let  $s>1-\frac{1}{p}$.
Then,   
  the normal form equation \eqref{NF1a} is 
  unconditionally semilinearly locally well-posed in $\mathcal{F}L^{s,p}(\T^2)$.

\smallskip

\noi
\textup{(ii)}
Let  $s< 1-\frac{1}{p}$.
Then,   
  the normal form equation \eqref{NF1a} is semilinearly ill-posed in $\mathcal{F}L^{s,p}(\T^2)$. 
More specifically,      
 the solution map for \eqref{NF1a} on $\F L^{s, p}(\T^2)$ fails to be  $C^3$
 for $s< 1-\frac{1}{p}$.

\end{theorem}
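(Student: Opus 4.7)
My approach is to prove unconditional local well-posedness of \eqref{NF1a} by a direct Banach fixed point argument in
\begin{equation*}
\mathcal{X}_T := C\bigl([0,T]; \F L^{s,p}(\T^2)\bigr),
\end{equation*}
\emph{without} introducing any auxiliary function space. Given $u_0 \in \F L^{s,p}(\T^2)$, denote by $\Psi_{u_0}[u]$ the right-hand side of \eqref{NF1a} with $u(0)$ replaced by $u_0$. I aim to show that, for $T = T(\|u_0\|_{\F L^{s,p}}) > 0$ sufficiently small, $\Psi_{u_0}$ contracts a suitable closed ball of $\mathcal{X}_T$ into itself.

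The key analytic inputs are multilinear estimates of the schematic form
\begin{align*}
\bigl\|\Nf_0^{(j)}(u_1,\dots,u_{2j-1})\bigr\|_{\F L^{s,p}}
&\lesssim C_0^{\,j} \prod_{k=1}^{2j-1}\|u_k\|_{\F L^{s,p}}, \\
\bigl\|\Nf_1^{(j)}(u_1,\dots,u_{2j+1})\bigr\|_{\F L^{s,p}}
&\lesssim C_0^{\,j} \prod_{k=1}^{2j+1}\|u_k\|_{\F L^{s,p}}, \\
\bigl\|\Rf^{(j)}(u_1,\dots,u_{2j+1})\bigr\|_{\F L^{s,p}}
&\lesssim C_0^{\,j} \prod_{k=1}^{2j+1}\|u_k\|_{\F L^{s,p}},
\end{align*}
where $C_0 > 0$ can be made small by choosing the dyadic frequency thresholds in the normal form reductions appropriately in terms of $\|u_0\|_{\F L^{s,p}}$. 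Under such bounds, the three infinite series in \eqref{NF1a} converge absolutely in $\F L^{s,p}(\T^2)$ for $u$ in a small ball of $\mathcal{X}_T$. These estimates come from the sharp hyperbolic counting estimate (Lemma \ref{LEM:count1}) applied at each non-resonant node of the normal form tree: the factor $|\Phi(\bar n)|^{-1}$ produced by each integration by parts, once dyadically decomposed in $|\Phi(\bar n)|$ and combined with the counting bound for $\G_{\mu}^{-}(n)$, furnishes exactly the dispersive smoothing needed to close the estimates at regularity $s > 1 - 1/p$.

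\textbf{Main obstacle.} Since the hyperbolic resonant set $\G_{\mathrm{res}}^{-}(n)$ is substantially larger than its elliptic counterpart, the per-node counting loss is strictly worse than in \cite{GKO, OW2}. The main technical point is therefore to organize the tree-based combinatorics so that, after summing over all trees of depth $j$, the gains at each non-resonant node compound into a uniform bound of the form $C_0^j$ and dominate the factorial growth in the number of such trees. The sharpness of the threshold $s > 1 - 1/p$ is inherited from Theorem \ref{THM:1}: it is precisely the regularity at which the per-step gain balances the per-step loss in the hyperbolic counting. Once these multilinear estimates are in place, a standard contraction/difference argument (exploiting that the boundary contributions $\Nf_0^{(j)}(u)(0) - \Nf_0^{(j)}(v)(0)$ in the difference $\Psi_{u_0}[u] - \Psi_{u_0}[v]$ vanish identically) shows that $\Psi_{u_0}$ admits a unique fixed point in $\mathcal{X}_T$. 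Because the entire argument is carried out in $C([0,T]; \F L^{s,p}(\T^2))$ without intersecting any auxiliary space, the resulting uniqueness is automatically unconditional.

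\textbf{Plan for part (ii).} For the $C^3$ ill-posedness, I would expand the data-to-solution map for \eqref{NF1a} around $u_0 = 0$ and isolate the trilinear-in-$u_0$ contribution. Substituting the first Picard iterate $u = S(t)u_0$ into the nonlinear part of $\Psi_{u_0}$ collects the cubic-in-$u_0$ pieces from $\Nf_0^{(2)}(S(\cdot)u_0)(t) - \Nf_0^{(2)}(S(\cdot)u_0)(0)$ together with $\int_0^t S(t-t') [\Nf_1^{(1)} + \Rf^{(1)}](S(t')u_0)\,dt'$. Since each step of the normal form reduction is an exact integration-by-parts identity that preserves the equation, these three contributions reassemble into the standard cubic Picard iterate
\begin{equation*}
i\int_0^t S(t-t')\,\bigl|S(t')u_0\bigr|^2 S(t')u_0\,dt'
\end{equation*}
of the cubic HNLS \eqref{HNLS1}. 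Theorem \ref{THM:1}(ii) (whose proof adapts \cite{Wang}) exhibits a sequence of initial data on which this trilinear map fails to be bounded from $\F L^{s,p}(\T^2)$ into $C([0,T]; \F L^{s,p}(\T^2))$ whenever $s < 1 - 1/p$, and the same counterexample immediately rules out $C^3$ regularity of the solution map of \eqref{NF1a} in this range.
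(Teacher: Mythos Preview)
Your plan matches the paper's approach: for (i), the paper proves exactly the multilinear bounds you describe (Lemmas~\ref{LEM:N0j}--\ref{LEM:N1j}, driven by the hyperbolic counting Lemma~\ref{LEM:count1}) and then runs the contraction in $C([0,T];\F L^{s,p})$ by reference to \cite[Section~2]{OW2}; for (ii), the paper likewise reduces to the unboundedness of the cubic Picard iterate (Lemma~\ref{LEM:ill}).

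Two refinements are worth flagging. First, a per-node gain compounding to $C_0^{\,j}$ cannot by itself dominate the $(2j{-}1)!!$ ordered-tree count; the paper secures an additional factorial decay by letting the $k$th modulation threshold grow with $k$, namely $|\wt\mu_k|>((2k{+}1)K)^{4p}$ (see \eqref{Aj} and \eqref{eta8}), which produces bounds of the form $K^{4(1-j)}/((2j{-}1)!!)^{2}$ rather than merely $C_0^{\,j}$. Second, for (ii) the paper does not invoke the general integration-by-parts identity you use; instead it observes that the specific data $f_N$ in \eqref{ill3} has Fourier support on the diagonal $\Delta(\Z^2)$, so $\Phi(\bar n)\equiv 0$ there, the boundary term $\Nf_0^{(2)}$ (supported on $|\Phi|>(3K)^{4p}$) vanishes outright, and $\Nf_1^{(1)}+\Rf^{(1)}$ applied to $S(t)f_N$ already reproduces the full cubic Picard iterate $A[f_N]$. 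Your route via the exact normal form identity is equally valid and in fact works for arbitrary data, not just diagonal $f_N$.
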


By comparing Theorem \ref{THM:3}\,(i)
with Theorem \ref{THM:2}\,(i), 
we see that 
 the normal form equation~\eqref{NF1a}
behaves better in the low regularity setting than  the cubic HNLS \eqref{HNLS1}.
Hence, the normal form approach can be viewed
as  an analytical  renormalization.
See Remark \ref{REM:UU2}.

Theorem \ref{THM:2}\,(i)
on the unconditional uniqueness of solutions to the original cubic HNLS~\eqref{HNLS1}
follows as an immediate corollary to Theorem \ref{THM:3}, 
once we observe that the $L^3_x$-regularity 
(together with the embeddings \eqref{emb0} and \eqref{emb1})
suffices
to show the equivalence of 
 the original cubic HNLS \eqref{HNLS1} and the normal form equation~\eqref{NF1a}.
Since this part of the argument is standard, we omit details;
see \cite{GKO, OW2}.
See also Remark \ref{REM:UU2}.

In Section \ref{SEC:NF1}, we formally implement
an infinite iteration of normal form reductions
and derive the normal form equation \eqref{NF1a}.
In Section \ref{SEC:NF2}, 
we estimate the relevant multilinear operators.
Once we establish the relevant multilinear estimates
(Lemmas \ref{LEM:N0j}, \ref{LEM:Rj}, 
and~\ref{LEM:N1j} stated in Subsection 
\ref{SEC:NF1d}), 
Theorem \ref{THM:3}\,(i)
on  unconditional local well-posedness
for  the normal form equation \eqref{NF1a}
follows from a simple contraction  argument.
We omit details.
See \cite[Section 2]{OW2}.
As for a proof of 
 Theorem \ref{THM:3}\,(ii), 
 see  
 Subsection \ref{SUBSEC:A2}.
See also Remark \ref{REM:p12}.

\begin{remark}\label{REM:UU2}\rm

As we see in Subsection \ref{SUBSEC:NF1c}
(see Remark \ref{REM:UU}), 
by 
imposing a higher regularity on 
$u \in C([0, T]; \FL^{s, p}(\T^2))$
for $s$ and $p$
satisfying \eqref{emb3}, 
we derive the normal form equation \eqref{NF1a}
in a weaker topology $C([0, T]; \FL^{\infty}(\T^2))$. 
Conversely, under the same regularity assumption~\eqref{emb3}
(in particular, the embedding \eqref{emb0} or \eqref{emb1} holds), 
we can easily show that a solution $u$ to the normal form equation \eqref{NF1a}
also satisfies the cubic HNLS \eqref{HNLS1}.
See \cite[Section 2.2]{OW2} for details.
Namely, under the regularity assumption \eqref{emb3}, 
the cubic HNLS~\eqref{HNLS1} and the normal form equation \eqref{NF1a}
are equivalent.
On the other hand, 
in view of Theorem \ref{THM:3}\,(i), 
we see that when $3 < p < \infty$ and $1 - \frac 1p < s \le \frac 43 - \frac 2p$
 the normal form equation \eqref{NF1a}
behaves better than the original  cubic HNLS~\eqref{HNLS1}.

\end{remark}

\begin{remark}\rm
We also mention a recent preprint
\cite{Bruned} by 
Bruned, 
where the author 
provided a derivation of the normal form equation 
(for the cubic elliptic NLS on $\T$) from the algebraic viewpoint
via arborification.
 See  Remark \ref{REM:Bruned}.
\end{remark}

\medskip

We conclude this subsection 
by stating a corollary to the normal form approach (Theorem~\ref{THM:3})
to the original cubic HNLS \eqref{HNLS1}
(besides Theorem \ref{THM:2}).
Recall the following notion of 
{\it sensible weak solutions};
see \cite{OW1, OW2, FO}.

\begin{definition}\label{DEF:sol}\rm

Let $s \in \R$ and $1<  p < \infty$ and $T>0$. 
Given $u_0 \in \FL^{s, p}(\T^2)$, 
we say that
 $u \in C([0,T]; \FL^{s, p}(\T^2))$
is a sensible weak solution
to the cubic HNLS \eqref{HNLS1}   on $[0, T]$ if,
for any sequence $\{u_{0, m}\}_{m \in \NB}$ of smooth functions
tending to $u_0$ in $\FL^p(\T^2)$, 
the corresponding (classical) solutions $u_m$ with $u_m|_{t = 0} = u_{0, m}$
converge to $u$ 
in $C([0,T]; \F L^{s, p} (\T^2))$.
Moreover, we impose that there exists  a distribution $v$
such that the nonlinearity $|u_m|^2 u_m$ converges to  $v$ in the space-time distributional sense,
independently of the choice of an approximating sequence.

\end{definition}

As pointed out in \cite{OW2}, 
the last part of Definition \ref{DEF:sol}
is not quite necessary, 
since
the convergence of  $u_m$ to $u$ 
in $C([0,T]; \F L^{s, p} (\T^2))$
implies that 
the nonlinearity $|u_m|^2 u_m$  converges to some limit $v$ in the space-time distributional sense.
We, however, keep it for clarity.
We also note that  sensible weak solutions are unique by definition.
See~\cite{OW2} for a further discussion
on various notions of weak solutions.

As a corollary to Theorem \ref{THM:3}, 
we have the following local well-posedness
of the cubic HNLS \eqref{HNLS1}
in the sense of sensible weak solutions.

\begin{corollary}\label{COR:4}
Let $1  < p < \infty$ and  $s>1-\frac{1}{p}$.
Then, the cubic HNLS \eqref{HNLS1} on $\T^2$
 is locally well-posed in $\FL^{s, p}(\T^2)$
in the sense of sensible weak solutions.

\end{corollary}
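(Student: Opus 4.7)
The plan is to combine the unconditional semilinear local well-posedness of the normal form equation \eqref{NF1a} from Theorem \ref{THM:3}\,(i) with the fact that smooth solutions to HNLS also solve \eqref{NF1a}. Fix $u_0 \in \FL^{s,p}(\T^2)$ with $s > 1-\frac{1}{p}$ and let $\{u_{0,m}\}$ be any sequence of smooth functions converging to $u_0$ in $\FL^{s,p}(\T^2)$. The corresponding classical smooth solutions $u_m$ to \eqref{HNLS1} exist on a common short time interval, and since each $u_m$ is smooth, the infinite iteration of normal form reductions of Section \ref{SEC:NF1} applies rigorously and each $u_m$ also satisfies the normal form equation \eqref{NF1a}.

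Next, I apply Theorem \ref{THM:3}\,(i) to \eqref{NF1a} in $\FL^{s,p}(\T^2)$. Since the proof there is a standard Banach fixed point argument, the data-to-solution map for the normal form equation is Lipschitz continuous on bounded subsets of $\FL^{s,p}(\T^2)$: there exist $T = T(\|u_0\|_{\FL^{s,p}}) > 0$ and $C > 0$, independent of $m$ for all sufficiently large $m$, such that
\begin{equation*}
\|u_m - u_\ell\|_{C([0,T]; \FL^{s,p}(\T^2))} \le C \|u_{0,m} - u_{0,\ell}\|_{\FL^{s,p}(\T^2)}.
\end{equation*}
Hence $\{u_m\}$ is Cauchy in $C([0,T]; \FL^{s,p}(\T^2))$ and converges to some limit $u \in C([0,T]; \FL^{s,p}(\T^2))$, whose value is independent of the approximating sequence by the same Lipschitz estimate.

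It then remains to verify the convergence of the nonlinearity demanded by Definition \ref{DEF:sol}. Since each $u_m$ is a smooth pointwise solution to \eqref{HNLS1}, we have
\begin{equation*}
|u_m|^2 u_m = -i\partial_t u_m - \Box u_m.
\end{equation*}
The convergence $u_m \to u$ in $C([0,T]; \FL^{s,p}(\T^2))$ implies convergence in $\mathcal{D}'((0,T) \times \T^2)$, so $\partial_t u_m \to \partial_t u$ and $\Box u_m \to \Box u$ in the distributional sense. Therefore $|u_m|^2 u_m$ converges to $v := -i\partial_t u - \Box u$ in $\mathcal{D}'$, independently of the approximating sequence, which completes the verification. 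The main subtlety is that, for $p > 3$ with $1 - \frac{1}{p} < s \le \frac{4}{3} - \frac{2}{p}$, the regularity assumption \eqref{emb3} fails, so the limit $u$ need not satisfy \eqref{HNLS1} in the classical or mild sense and the nonlinearity $|u|^2 u$ a priori lacks an intrinsic meaning at the limiting regularity; the sensible-weak-solution framework accommodates precisely this scenario by defining $v$ through the limiting procedure, and the uniform-in-$m$ Lipschitz control supplied by the contraction-based proof of Theorem~\ref{THM:3}\,(i) is what makes the argument go through.
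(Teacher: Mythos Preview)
Your proposal is correct and follows essentially the same route the paper indicates: the paper simply says ``by argument as in \cite[Subsection 2.2]{OW2}, Corollary~\ref{COR:4} follows from Theorem~\ref{THM:3}'' and omits details, so you have in fact supplied more than the paper does. One small presentational point: your sentence ``the corresponding classical smooth solutions $u_m$ exist on a common short time interval'' is asserted \emph{before} you invoke Theorem~\ref{THM:3}\,(i), which is what actually furnishes the uniform time $T$ (together with persistence of regularity for the normal form equation, so that the normal form solution with smooth data stays smooth on $[0,T]$ and hence coincides with the classical HNLS solution there); reordering these two sentences would make the logic cleaner, but the argument itself is sound.
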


We note that Corollary \ref{COR:4} follows
from Theorem \ref{THM:1}
obtained via the Fourier restriction norm method.
The point of Corollary \ref{COR:4}
is that it can be obtained solely via the normal form method
without relying on any auxiliary function spaces,
in particular 
 independently of  the Fourier restriction norm method.
The notion of sensible weak solutions
is a standard notion of solutions
in the study of completely integrable PDEs
(see \cite{KT, KV}).
We also point out that an analogous notion of solutions, 
where a solution is defined to be the unique\footnote{Here, 
independence of the choice of an approximating sequence
is required to hold only within a certain class of regularization such as mollification.
See~\cite{OOTz} for a further discussion.} 
limit
of smooth solutions,  
is standard 
in the study of 
singular stochastic PDEs \cite{Hairer, GKO2}.
Hence, 
Corollary \ref{COR:4} via the normal form method
provides a satisfactory solution theory
for the cubic HNLS \eqref{HNLS1} on $\T^2$, 
matching the solution theory via the Fourier restriction norm method (Theorem \ref{THM:1}).
By argument as in 
 \cite[Subsection 2.2]{OW2}, 
Corollary \ref{COR:4}
follows from Theorem \ref{THM:3}
and thus we omit details.

\begin{remark}\label{REM:NF1} \rm

The Poincar\'e-Dulac normal form reductions have been 
used in various contexts:
small data global well-posedness \cite{Shatah}, 
unconditional uniqueness \cite{BIT, KO, GKO, OW2, KishiX, Kishi21,  MP2, FLZ}, 
nonlinear smoothing and the nonlinear Talbot effect \cite{ETz, ETz3, ETz2, CS}, 
existence of global attractors~\cite{ETz3}, 
reducibility~\cite{CGKO}, 
improved energy estimates 
in both the deterministic and probabilistic contexts
\cite{TT, OTz, GO, OW0, OST, OS}.
Recently, it has also been applied to 
modulated dispersive equations
\cite{GLLO}, 
stochastic dispersive equations
\cite{OSW}, 
and  numerical schemes~\cite{CFO}.

\end{remark}

\subsection{Outline of the paper}

In Section \ref{SEC:2}, 
we introduce notations
and establish the hyperbolic counting
estimate (Lemma \ref{LEM:count1}).
In Section \ref{NF1a}, 
we go over a formal derivation of the normal form equation \eqref{NF1a}
and state the relevant multilinear estimates, 
which are then proven in Section \ref{SEC:NF2}.
In Appendix \ref{SEC:A}, we present a proof of Theorem \ref{THM:1}.

\section{Preliminaries}
\label{SEC:2}

In this section, we introduce some notations
and then establish a key hyperbolic counting estimate (Lemma \ref{LEM:count1}).

\subsection{Notations}

Let $A\les B$ denote an estimate of the form $A\leq CB$ for some constant $C>0$. We write $A\sim B$ if $A\les B$ and $B\les A$, while $A\ll B$ denotes $A\leq c B$ for some small constant $c> 0$. 
%We may write  $\les_{\al}$ and $\sim_{\al}$ to 
%emphasize the dependence on an external parameter $\al$.
We use $C>0$ to denote various constants, which may vary line by line.

Given dyadic $N \ge 1$, 
we use the notation $\{|n| \sim N\}$
to denote $\{\frac 12 N \le |n| < 2 N\}$ for $N \ge 2$
and $\{|n| < 2\}$ for $N = 1$ in the following.

Given a multilinear operator $T$, 
we use the following short-hand notation:
\begin{align}
T(f) = T(f, \dots, f).
\label{short1}
\end{align}

\noi
 Furthermore, %given a multilinear operator $T$, 
  we use $T(f)(n)$ to denote the Fourier coefficients of $T(f)$.

 We use  
  $ S(t) = e^{i t \Box}$
  to denote
  the hyperbolic Schr\"odinger propagator.

\subsection{Hyperbolic counting estimate}
\label{SUB:counting}

Our main goal
in this subsection
 is to establish the following
sharp 
 counting estimate associated with the cubic HNLS  \eqref{HNLS1}.

\begin{lemma}[hyperbolic counting estimate]\label{LEM:count1}
Given $n \in \Z^2$ and $\mu \in \Z$, let 
$\G_\mu^-(n)$ be as in~\eqref{GG0}.
Then, given any $\ta > 0$, there exists $C > 0$, 
independent of  $n \in \Z^2$ and $\mu \in \Z$,  
such that 
    \begin{align}
\big|  \G_\mu^-(n) \cap \{ |n_1| \sim N_1, \, |n_3|\sim N_3\}  \big|
&  \leq CN_1N_3\max(N_1, N_3)^{\ta},   
\label{count1} \\ 
\big|  \G_\mu^-(n) \cap \{ |n_1| \sim N_1, \, |n_2|\sim N_2\}  \big|
&  \leq CN_1N_2\max(N_1, N_2)^{\ta}
\label{count2}
    \end{align}

\noi
for any  dyadic $N_1, N_2, N_3 \ge 1$.
\end{lemma}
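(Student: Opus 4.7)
The plan is as follows. First, a direct computation substituting $n_2 = n_1 + n_3 - n$ into the modulation yields the clean bilinear identity
\[
|n|_-^2 - |n_1|_-^2 + |n_2|_-^2 - |n_3|_-^2 \;=\; 2\langle n_1 - n,\, n_3 - n\rangle_-,\qquad \langle (x_1, x_2), (y_1, y_2)\rangle_- := x_1 y_1 - x_2 y_2.
\]
With the change of variables $a := n_1 - n$, $b := n_3 - n$, the set $\G_\mu^-(n) \cap \{|n_1|\sim N_1,\, |n_3|\sim N_3\}$ is in bijection with pairs $(a,b) \in (\Z^2 \setminus \{0\})^2$ satisfying $|n + a|\sim N_1$, $|n + b|\sim N_3$, and $\langle a, b\rangle_- = \mu/2$. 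By the symmetry $(n_1, N_1) \leftrightarrow (n_3, N_3)$ inside $\G_\mu^-(n)$, I assume $N_3 \leq N_1$ for \eqref{count1}.

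For each fixed $b \neq 0$, the equation $\langle a, b\rangle_- = \mu/2$ defines an affine line $L_b \subset \R^2$; its integer points (nonempty only if $\gcd(b_1, b_2)$ divides $\mu/2$) form an arithmetic progression with step $|b|/\gcd(b_1, b_2)$, and $L_b \cap \{|n + a|\sim N_1\}$ lies
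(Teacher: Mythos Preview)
Your reduction via the hyperbolic bilinear form $\langle a,b\rangle_- = a_1 b_1 - a_2 b_2$ and the substitution $a = n_1 - n$, $b = n_3 - n$ is correct, and line counting is a genuine alternative to the paper's route. The paper instead writes the constraint as $2(j-j_1)(j-j_3) - 2(k-k_1)(k-k_3) = \mu$, fixes the second coordinates $k_1, k_3$ (costing $O(N_1 N_3)$), and then applies the localized divisor bound of Lemma~\ref{LEM:div1} to the resulting product $(j-j_1)(j-j_3)$, with a short case split when this product vanishes; the identical scheme with $(k_1,k_2)$ fixed and the factorization $(j-j_1)(j_1-j_2)$ gives \eqref{count2}.

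For \eqref{count1} your argument can be completed, but one step is not yet written: after bounding the number of $a$'s on $L_b$ by $C\, N_1 \gcd(b_1,b_2)/|b| + 1$ you must sum over $b$ with $|n+b|\sim N_3$. The ``$+1$'' contributes $O(N_3^2) \le O(N_1 N_3)$, but the main term needs
\[
\sum_{\substack{|n+b|\sim N_3\\ b\neq 0}} \frac{\gcd(b_1,b_2)}{|b|} \;\lesssim\; N_3^{1+\theta},
\]
uniformly in $n$; this is true but deserves justification (it is an averaged gcd estimate, and not entirely immediate when the summation box is far from the origin).

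The more serious gap concerns \eqref{count2}. In the variables $(a,b)$ the constraints become $|n+a|\sim N_1$ and $|n+a+b|\sim N_2$, and there is no symmetry letting you assume $N_1 \le N_2$. If $N_1 > N_2$ and you fix $a$ (the only choice making the remaining equation linear in the free variable), the ``$+1$'' term now sums to $O(N_1^2)$, which exceeds the target $N_1 N_2 \max(N_1,N_2)^\theta$. Fixing $b$ is no help since $|n_3|$ is unconstrained, and fixing $n_2$ makes the constraint on $n_1$ quadratic. A rescue would require restricting to those $a$ for which $L_a$ actually meets the $N_2$-ball around $-(n+a)$, but carrying this out is essentially a divisor-type count in disguise. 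The paper's coordinate-wise scheme avoids this asymmetry: fixing one coordinate of \emph{each} of $n_1, n_2$ always costs exactly $O(N_1 N_2)$, independently of which scale is larger.
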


Lemma \ref{LEM:count1}
plays a crucial role
in establishing 
local well-posedness of \eqref{HNLS1}
via the Fourier restriction norm method (Theorem \ref{THM:1}\,(i))
and
unconditional 
local well-posedness
of the normal form equation \eqref{NF1a}
 (Theorem \ref{THM:3}\,(i)).
Before proceeding to a proof of Lemma \ref{LEM:count1}, 
let us recall the 
 divisor counting bound; see \cite[Lemma 4.4]{DNY2} for a proof.

\begin{lemma}\label{LEM:div1}

Given $m\in\Z\setminus \{0\}$
 $a_0,b_0\in \Z$, and $M, N > 0$
we have
\begin{equation*}
\big|\big\{(a, b)\in \Z^2: m=ab,\,\,|a-a_0|\leq M,\,\,|b-b_0|\leq N\big\}\big|
= O(M^\ta N^\ta)
\end{equation*}

\noi
for any $\ta > 0$.
Here, the implicit constant is independent of $m$, $a_0$,  $b_0$, 
$M$, and $N$.

\end{lemma}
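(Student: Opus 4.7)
My plan is to deduce the estimate from the classical divisor bound
\[
d(n) = O_\ta(n^\ta) \quad \text{for any } \ta > 0,
\]
combined with a case analysis that effectively decouples the $M$- and $N$-contributions. Since $m \ne 0$, specifying $a$ determines $b = m/a$ uniquely, so the cardinality in question equals
\[
|\mathcal{D}| := \big|\big\{a \in \Z : a \mid m,\ |a - a_0| \le M,\ |m/a - b_0| \le N\big\}\big|.
\]

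In the symmetric case where both $|a_0| \le 2M$ and $|b_0| \le 2N$, every element of $\mathcal{D}$ satisfies $|a| \les M$ and $|b| \les N$, so $|m| \le |a||b| \les MN$, and the divisor bound gives
\[
|\mathcal{D}| \le d(|m|) = O_\ta((MN)^\ta) = O_\ta(M^\ta N^\ta),
\]
which settles this case.

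In the remaining regime, say $|a_0| > 2M$ (the case $|b_0| > 2N$ being analogous by the symmetric roles of $a$ and $b$), the elements of $\mathcal{D}$ concentrate in a narrow window around $a_0$ and $|m|$ may be as large as $|a_0 b_0|$, so the direct divisor bound is too crude. I would fix a reference point $a_\ast \in \mathcal{D}$ (if $\mathcal{D}$ is empty there is nothing to prove), set $b_\ast = m/a_\ast$, and use the injective decomposition $a \mapsto (g, d) := \big(\gcd(a, a_\ast),\, a/\gcd(a, a_\ast)\big)$. By definition $g \mid a_\ast$, and a routine coprimality argument (write $a_\ast = g a_\ast'$ with $\gcd(d, a_\ast') = 1$, then $gd \mid g a_\ast' b_\ast$ forces $d \mid b_\ast$) yields $d \mid b_\ast$; moreover $g \mid (a - a_\ast)$ together with $|a - a_\ast| \le 2M$ forces $g \le 2M$ whenever $a \ne a_\ast$. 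Substituting $m/a = (a_\ast/g)(b_\ast/d)$ into the constraint $|m/a - b_0| \le N$ further confines $d$ to an interval whose length is controlled by $N$. Applying the divisor bound to $a_\ast$ and $b_\ast$ separately then yields the product estimate $|\mathcal{D}| = O_\ta(M^\ta N^\ta)$.

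The main obstacle is to arrange the decomposition so that the two box constraints contribute \emph{multiplicatively}, yielding $M^\ta \cdot N^\ta$ rather than a single $|m|^\ta$ factor; this is exactly the content of \cite[Lemma~4.4]{DNY2}, to which I would refer for the full details.
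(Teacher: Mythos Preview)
Your proposal aligns with the paper's treatment: the paper does not give its own proof of this lemma but simply refers the reader to \cite[Lemma~4.4]{DNY2}, exactly as you do at the end. Your additional sketch (divisor bound in the symmetric case, $\gcd$-based decomposition $a\mapsto(g,d)$ in the asymmetric case) is in the spirit of that reference, though the step ``applying the divisor bound to $a_\ast$ and $b_\ast$ separately'' is where the work lies and is precisely what you (and the paper) defer to \cite{DNY2}.
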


We now present a proof of the hyperbolic counting estimate (Lemma \ref{LEM:count1}).

\begin{proof}[Proof of Lemma \ref{LEM:count1}]

We first prove \eqref{count1}. 
Write $n = (j, k)$ and $n_\l = (j_\l, k_\l)$, $\l = 1, 2, 3$.
From \eqref{GG0} with \eqref{GG0a}, 
we have 
 \begin{align}
2(j - j_1)(j-j_3) -  2(k - k_1) (k-k_3)  = \mu
\label{count3}
 \end{align}
 
\noi
on $\G_\mu^-(n)$. 
Fix $k_1, k_3 \in \Z$ with $|n_1|\sim N_1$ and $|n_3|\sim N_3$, 
which has $O(N_1 N_3)$ many choices.
Note that, from \eqref{count3}, 
the value of 
$m = (j - j_1)(j-j_3) = \frac \mu 2 + (k-k_1)(k-k_3)$ is determined, 
since $k, \mu, k_1, $ and $k_3$ are fixed.
Suppose that  $m = (j - j_1)(j-j_3)  \ne 0$.
Then, it follows from Lemma \ref{LEM:div1} that,  
 given $m = (j - j_1)(j-j_3) \in \Z\setminus\{0\}$,    
there are $O(\max(N_1,  N_3)^\ta)$ many choices
for $j_1$ and $j_3$, 
which determines $n_1$ and $n_3$ (and hence $n_2$
under the condition~\eqref{GG0b}).
This yields the bound \eqref{count1}
 if $(j - j_1)(j-j_3)  \ne 0$.
When $(k - k_1) (k-k_3)\ne 0$,  
 a similar argument yields the bound \eqref{count1}.
 
 Hence, it remains to consider the case: 
 \begin{align*}
(j - j_1)(j-j_3) =  (k - k_1) (k-k_3)  = 0.
\end{align*} 

\noi
Since $n\neq n_1, n_3$, it suffices to consider  the following cases: 
\begin{align*}
\text{(a)} \  \ j - j_1 = 0 \text{ and }k - k_3 = 0
\qquad \text{and}
\qquad 
\text{(b)}\ \ 
j - j_3 = 0 \text{ and }k - k_1 = 0.
\end{align*}

\noi
By symmetry, we only consider Case (a).
In this case, given $n = (j, k) \in \Z^2$, $j_1$ and $k_3$ are determined.
Under the condition $|n_1|\sim N_1$ and $|n_3|\sim N_3$, 
this leaves $O(N_1N_3)$ many choices
for $k_1 $ and $j_3$, 
which yields \eqref{count1}.

\medskip
Next, we prove \eqref{count2}.
From \eqref{GG0} with \eqref{GG0a}, 
we have 
 \begin{align}
2(j - j_1)(j_1-j_2) -  2(k - k_1) (k_1-k_2)  = \mu
\label{count5}
 \end{align}

\noi
on $\G_\mu^-(n)$. 
Fix $k_1, k_2 \in \Z$ with $|n_1|\sim N_1$ and $|n_2|\sim N_2$, 
which has $O(N_1 N_2)$ many choices.
Note that, from \eqref{count5}, 
the value of 
$m = (j - j_1)(j_1-j_2) = \frac \mu 2 + (k-k_1)(k_1-k_2)$ is determined, 
since $k, \mu, k_1, $ and $k_2$ are fixed.
Suppose that  $m = (j - j_1)(j_1-j_2)  \ne 0$.
Then, it follows from Lemma \ref{LEM:div1} that,  
 given $m = (j - j_1)(j_1-j_2) \in \Z\setminus\{0\}$,    
there are $O(\max(N_1,  N_2)^\ta)$ many choices
for $j_1$ and $j_2$, 
which determines $n_1$ and $n_2$ (and hence $n_3$
under the condition~\eqref{GG0b}).
This yields the bound \eqref{count2}
 if $(j - j_1)(j_1-j_2)  \ne 0$.
When $(k - k_1) (k_1-k_2)\ne 0$,  
 a similar argument yields the bound \eqref{count2}.

 Hence, it remains to consider the case: 
 \begin{align*}
(j - j_1)(j_1-j_2) =  (k - k_1) (k_1-k_2)  = 0.
\end{align*}

\noi
Since $n\neq n_1, n_3$, it suffices to consider 
 the following cases: 
\begin{align*}
\text{(c)} \  \ j - j_1 = 0 \text{ and }k_1 - k_2 = 0
\qquad \text{and}
\qquad 
\text{(d)}\ \ 
j_1 - j_2 = 0 \text{ and }k - k_1 = 0.
\end{align*}

\noi
By symmetry, we only consider Case (c).
In this case, given $n = (j, k) \in \Z^2$, $j_1$ is  determined.
Under the condition $|n_1|\sim N_1$, 
this leaves $O(N_1)$ many choices
for $k_1 $, which determines $k_2$.
Then, under the condition  $|n_2|\sim N_2$, 
this leaves $O(N_2)$ many choices
for $j_2 $, 
which yields~\eqref{count2}.
\end{proof}

\begin{remark}\label{REM:sharp}\rm

We point out that the proof of Lemma \ref{LEM:count1}
also applies to the elliptic case
(namely, $\G_\mu^+(n)$ in \eqref{GG0} also satisfies the bounds \eqref{count1} and \eqref{count2}).
The only change appears
in \eqref{count3} and \eqref{count5}, 
where the $-$ sign becomes the $+$ sign, 
but this does not affect the argument.
See also \cite{BO96, DNY2,Kishi21, OW3}
for further counting estimates in the elliptic case.

\end{remark}

\section{Normal form reductions}
\label{SEC:NF1}

In this section, we  go over 
an infinite iteration of normal form reductions
and formally derive the normal form equation \eqref{NF1a}
with explicit definitions of the multilinear operators
$\Nf_0^{(j)}$, $\Nf_1^{(j)}$, and $\Rf^{(j)}$; see \eqref{NF1b}.
Since the formal procedure remains the same as
those presented in \cite{GKO, OW2}, 
we keep our presentation brief.
In the next section, 
we estimate
the relevant multilinear operators.

\subsection{First step}
\label{SUBSEC:NF1a}

Define the trilinear operators $\NN ^{(1)}$ and $\RR ^{(1)}$ as follows
\begin{equation}
\begin{split}
\NN ^{(1)}(\uu  _1, \uu  _2, \uu  _3)& =i\sum_{n\in\Z^2}e^{in \cdot x}
   \sum_{\substack{n=n_1-n_2+n_3\\n_2\neq n_1, n_3}}e^{it\Phi (\bar n) }\widehat{\uu  }_1(n_1)\overline{\widehat{\uu  }_2(n_2)}\widehat{\uu  }_3(n_3), \\
     \RR ^{(1)}(\uu  _1, \uu  _2, \uu  _3) &
 = i \, \Rf_1(\uu_1, \uu_2, \uu_3)
 + i \, \Rf_2(\uu_1, \uu_2, \uu_3), 
\end{split}
\label{non3} 
\end{equation}

\noi
where $\Rf_1$ and $ \Rf_2$ are  as in \eqref{non1}.
Then, from 
\eqref{HNLS2}
and 
\eqref{non3}, we have 
\begin{align*}
  \NN ^{(1)}(\uu  )=\NN ^{(1)}(\uu  , \uu  , \uu  )
  \qquad \text{and}
  \qquad \RR ^{(1)}(\uu  )=\RR ^{(1)}(\uu  , \uu  , \uu  ), 
\end{align*}

\noi
which is consistent with 
the short-hand notation \eqref{short1}.

\medskip

\noi
{\bf Conventions:}
(i) For simplicity of notation, 
given a multilinear operator $T$, 
we use $T(f)(n)$ to denote the Fourier coefficients of $T(f)$
in the following.
We also use $\ft \uu_n = \ft \uu_n(t)$  
to denote $\ft \uu (t, n)$.
  
\smallskip

\noi
(ii)
Multilinear operators that appear in this section
are non-autonomous, i.e.~they depend on a parameter $t \in \R$.
They, however, satisfy estimates uniformly in time
and hence we simply suppress their time dependence
(as we already did for $\NN^{(1)}$ in \eqref{non3}).
In specifying $t$-dependence, 
we write $S(f)(t)$ and $S(f)(t, n)$, etc.

\medskip

With these conventions, we can write \eqref{HNLS2} as 
\begin{align}
 \dt \ft \uu_n  =\NN ^{(1)}(\uu  )(n)+\RR ^{(1)}(\uu  ) (n). 
\label{HNLS3}
\end{align}

\noi
The following lemma on 
the trivial resonant part $\RR ^{(1)}$ 
follows from the embeddings:
$\l_n^p  \subset  \l_n^{3p}$, 
and $\F L^{s, p}(\T^2) \subset L^2(\T^2)$.

\begin{lemma}\label{LEM:R1} 
Let $1\leq p\leq \infty$ and $s \ge 0$ with $s > 1 - \frac 2p$.
Then, we have
\begin{align}
    \| \RR ^{(1)}(\uu_1, \uu_2, \uu_3)\|_{\mathcal{F}L^{s,p}}
\les \prod_{j=1}^3 \| \uu_j\|_{\mathcal{F}L^{s,p}}.
\label{R1}
\end{align}
\end{lemma}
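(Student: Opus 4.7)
The plan is to treat the three summands of $\RR^{(1)}$ in \eqref{non1} separately. Writing
\begin{align*}
\F_x\bigl(\RR^{(1)}(\uu_1,\uu_2,\uu_3)\bigr)(n)
&= -i \,\ft\uu_1(n)\cj{\ft\uu_2(n)}\ft\uu_3(n) \\
&\quad + i\,\Bigl(\sum_{m\in\Z^2}\ft\uu_1(m)\cj{\ft\uu_2(m)}\Bigr)\ft\uu_3(n) \\
&\quad + i\,\Bigl(\sum_{m\in\Z^2}\ft\uu_3(m)\cj{\ft\uu_2(m)}\Bigr)\ft\uu_1(n),
\end{align*}
I would bound the three contributions to $\|\jb{n}^s\cdot\,\cdot\,\|_{\ell^p_n}$ in turn.

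For the diagonal term $\ft\uu_1(n)\cj{\ft\uu_2(n)}\ft\uu_3(n)$ (supported on a single frequency $n$), I apply H\"older on $\Z^2$ with exponents $(3p,3p,3p)$, obtaining
\begin{align*}
\bigl\|\jb{n}^s\ft\uu_1(n)\ft\uu_2(n)\ft\uu_3(n)\bigr\|_{\ell^p_n}
\;\le\; \bigl\|\jb{n}^s\ft\uu_1\bigr\|_{\ell^{3p}}\,\|\ft\uu_2\|_{\ell^{3p}}\,\|\ft\uu_3\|_{\ell^{3p}}.
\end{align*}
Since on discrete sets $\ell^p\subset\ell^{3p}$, and since $s\ge 0$ implies $\|\ft\uu_j\|_{\ell^p}\le\|u_j\|_{\F L^{s,p}}$, this is controlled by $\prod_j\|\uu_j\|_{\F L^{s,p}}$. (The case $p=\infty$ is handled by a trivial pointwise bound.)

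For each of the two ``inner product'' terms, say $c\cdot\ft\uu_k(n)$ with $c=\sum_m \ft\uu_i(m)\cj{\ft\uu_j(m)}$, I would simply use Cauchy--Schwarz on the $m$-sum to obtain
\[
|c|\le \|\ft\uu_i\|_{\ell^2}\|\ft\uu_j\|_{\ell^2}=\|\uu_i\|_{L^2}\|\uu_j\|_{L^2},
\]
after which the estimate reduces to the claim $\F L^{s,p}(\T^2)\subset L^2(\T^2)$. When $p\le 2$, this is immediate from $s\ge 0$ and $\ell^p\subset\ell^2$. When $p>2$, I would apply H\"older in frequency:
\[
\|\ft\uu\|_{\ell^2}\le \bigl\|\jb{n}^{-s}\bigr\|_{\ell^{\frac{2p}{p-2}}}\,\bigl\|\jb{n}^s\ft\uu\bigr\|_{\ell^p},
\]
where $\|\jb{n}^{-s}\|_{\ell^{2p/(p-2)}}<\infty$ precisely under the hypothesis $s>\tfrac{p-2}{p}=1-\tfrac{2}{p}$ (since the ambient dimension is $2$). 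Combining, $|c|\lesssim \|\uu_i\|_{\F L^{s,p}}\|\uu_j\|_{\F L^{s,p}}$ and multiplication by $\|u_k\|_{\F L^{s,p}}$ closes the estimate.

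There is no real obstacle here: the argument is a routine combination of discrete H\"older inequalities and the embedding $\F L^{s,p}\hookrightarrow L^2$, and both the requirement $s\ge 0$ (used for the diagonal term and for $p\le 2$) and the requirement $s>1-\tfrac{2}{p}$ (used for the ``inner product'' terms when $p>2$) enter in the expected places.
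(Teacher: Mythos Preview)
Your argument is correct and matches the paper's approach exactly: the paper simply states that the lemma ``follows from the embeddings $\ell^p_n \subset \ell^{3p}_n$ and $\F L^{s,p}(\T^2)\subset L^2(\T^2)$,'' and you have spelled out precisely these two ingredients, applying the first to the diagonal term $\Rf_1$ and the second (via Cauchy--Schwarz) to the inner-product terms in $\Rf_2$.
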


\begin{remark}\label{REM:con1}\rm
In the following, we establish various multilinear estimates.
For simplicity of  notations, 
we only state and prove estimates when all arguments agree
with the understanding that they can be easily extended 
to multilinear estimates; see Remark \ref{REM:diff2}.
Under this convention,
we can write \eqref{R1} as 
\begin{align*}
\| \RR^{(1)}(\uu)\|_{\F L^{s, p} }
\les  \|\uu\|_{\F L^{s, p}}^3.
\end{align*}

\end{remark}

As in \cite{GKO, OW2}, 
we now divide the essential nonlinear part 
 $\NN ^{(1)}$
 into the nearly resonant part and the (highly) non-resonant part.
Fix $1 <  p<\infty$ and $K\geq 1$ (to be chosen appropriately, depending on $\|u(0)\|_{\FL^{s, p}}$;
  see \cite[Proof of Theorem 1.9]{OW2}).
Then, given $n \in \Z^2$, 
we set 
\begin{align}
 A_1(n)=\bigcup_{|\mu|\le (3K)^{4p}}\G_\mu^-(n)
 \qquad \text{and} \qquad A_1 = \bigcup_{n \in \Z^2} A_1(n), 
\label{A1}
\end{align}

\noi
where 
$\G_\mu^-(n)$ is as in \eqref{GG0}.
Then, we define the nearly resonant part 
 $\NN ^{(1)}_1$ of
 $\NN ^{(1)}$
(and the non-resonant part $ \NN ^{(1)}_2$)  
 as  its restriction  onto $A_1$
(and its restriction onto $A_1^c$)
such that 
\begin{equation}
  \NN ^{(1)}=\NN ^{(1)}_1+\NN ^{(1)}_2 . 
\label{NN1}     
\end{equation}

\noi
Thanks to the modulation cutoff and the hyperbolic counting estimate
(Lemma \ref{LEM:count1}), 
the nearly resonant part $\NN^{(1)}_1$ satisfies
a good bound, 
which  is a special case of Lemma \ref{LEM:N1j};
see Subsection \ref{SUBSEC:NF2b} for a proof.

%(see Lemma \ref{LEM:N11}).

\begin{lemma}\label{LEM:N11} 
Let $1\leq p<\infty$ and  $s>1-\frac{1}{p}$.
Then,  we have
\begin{align*}
\| \NN _1^{(1)}(f_1, f_2, f_3) (t)\|_{\mathcal{F}L^{s,p}}
\lesssim K^{4(p-1)}\prod_{j = 1}^3 \|f_j\|^3_{\mathcal{F} L^{s,p}}, 
\end{align*}

\noi
uniformly in $t \in \R$.

\end{lemma}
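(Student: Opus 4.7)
The plan is to prove this by combining a dyadic decomposition in the input frequencies with pointwise Hölder's inequality on the sum defining $\widehat{\NN_1^{(1)}}(n)$, and then invoking the hyperbolic counting estimate (Lemma~\ref{LEM:count1}) together with the modulation cutoff built into the set $A_1(n)$ defined in \eqref{A1}.

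Concretely, I would first decompose each $\hat f_j$ on dyadic blocks $\{|n_j| \sim N_j\}$ and, exploiting the intrinsic $(n_1, n_3)$-symmetry of $\NN^{(1)}$, assume WLOG $N_1 \ge N_3$. For a fixed triple $(N_1, N_2, N_3)$ and a fixed $n$, I would apply Hölder's inequality to obtain
\[
|\widehat{\NN_1^{(1), N_j}}(n)|^p \le |A_1(n) \cap \{|n_j| \sim N_j\}|^{p-1} \sum_{A_1(n) \cap \{|n_j| \sim N_j\}} \prod_{j=1}^3 |\hat f_j(n_j)|^p .
\]
The cardinality is controlled via Lemma~\ref{LEM:count1}: since $|\mu| \le (3K)^{4p}$, summing the counting bound \eqref{count1} (or \eqref{count2}, chosen adaptively) over $\mu$ yields
\[
|A_1(n) \cap \{|n_j|\sim N_j\}| \lesssim K^{4p} \cdot N_1 N_3 \max(N_1, N_3)^\theta.
\]
Raising to the $(p-1)$-th power produces the factor $K^{4p(p-1)}$, whose $p$-th root is exactly $K^{4(p-1)}$.

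Next I would take $\ell^p_n$ of the pointwise bound. Swapping orders $\sum_n\sum_{A_1(n)} = \sum_{\text{valid triples}}$ (each triple determines $n = n_1-n_2+n_3$) and writing $|\hat f_j(n_j)| = \langle n_j\rangle^{-s} a_j(n_j)$ with $\|a_j\|_{\ell^p} = \|f_j\|_{\mathcal{F}L^{s,p}}$, I would bound the output weight $\langle n \rangle^{sp}$ by $\sum_j \langle n_j \rangle^{sp}$, thereby distributing the derivative across the three input frequencies. For each of the three resulting terms, the triple sum factorizes as a product of three dyadic-localized $\ell^p$-norms, producing an estimate of the form
\[
\|\langle n\rangle^s \widehat{\NN_1^{(1), N_j}}\|_{\ell^p_n} \lesssim K^{4(p-1)}\,N_1^{(1+\theta)/p'} \min(N_2,N_3)^{1/p'} N_1^{-s}N_2^{-s}N_3^{-s} \cdot \prod_j \alpha_j(N_j),
\]
(with analogous expressions in the other sub-cases), where $\alpha_j(N) = \|a_j \mathbf{1}_{|n|\sim N}\|_{\ell^p}$ satisfies $\sum_N \alpha_j(N)^p = \|f_j\|_{\mathcal{F}L^{s,p}}^p$.

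Finally, summing over dyadic blocks requires careful balancing: the condition $s > 1 - 1/p = 1/p'$ is exactly what is needed so that, upon applying Hölder in the dyadic index, $\sum_N N^{(\sigma - s)p'}$ converges. The main obstacle is that the raw counting exponent in the top frequency $N_{\max}$ is right at the threshold; to close the sum one must carefully choose between the two counting estimates \eqref{count1} and \eqref{count2} according to which of $N_2, N_3$ is larger (and which is the second-largest overall), and combine the weight redistribution $\langle n\rangle^s \lesssim \sum_j \langle n_j\rangle^s$ with these choices so that every dyadic scale receives a strictly negative exponent after absorbing the $N_j^{-s}$ factors. The sharpness of the regularity threshold $s = 1 - 1/p$ in Theorem~\ref{THM:1} indicates that this balance is indeed tight, which is consistent with the boundary case being the main technical difficulty.
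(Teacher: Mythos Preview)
Your plan has the right ingredients (H\"older plus Lemma~\ref{LEM:count1} plus the modulation cutoff), but the dyadic summation does not close the way you claim.  After your H\"older step and the counting bound, the block estimate for the term where the output weight lands on the top input frequency (say $N_1=N_{\max}$) reads, with the optimal choice of counting (the $(N_2,N_3)$-variant of \eqref{count2}),
\[
\|\langle n\rangle^s \widehat{\NN_1^{(1),N_j}}\|_{\ell^p_n}
\;\lesssim\; K^{4(p-1)}\,(N_2N_3\max(N_2,N_3)^\theta)^{1/p'}N_2^{-s}N_3^{-s}\,\prod_j\alpha_j(N_j).
\]
The $N_1$-exponent is \emph{exactly zero}, not strictly negative; no combination of \eqref{count1} and \eqref{count2} improves this.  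Since $\alpha_1(\,\cdot\,)\in\ell^p$ in the dyadic index but not $\ell^1$, the Minkowski sum $\sum_{N_1}$ diverges, so the sentence ``every dyadic scale receives a strictly negative exponent'' is incorrect at the top frequency.  (One can salvage this by exploiting that for $N_1\gg\max(N_2,N_3)$ the output is supported on $|n|\sim N_1$, so different $N_1$'s are orthogonal in $\ell^p_n$; but you did not invoke this.)

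The paper avoids the issue by organizing H\"older differently: it does \emph{not} dyadically decompose the inputs, but writes
\[
\langle n\rangle^s|\widehat{\NN_1^{(1)}}(n)|
\le
\bigg(\sum_{A_1(n)}\frac{\langle n\rangle^{sp'}}{\prod_j\langle n_j\rangle^{sp'}}\bigg)^{1/p'}
\bigg(\sum_{A_1(n)}\prod_j\langle n_j\rangle^{sp}|\hat f_j(n_j)|^p\bigg)^{1/p}.
\]
Taking $\ell^p_n$, the second factor collapses exactly to $\prod_j\|f_j\|_{\mathcal FL^{s,p}}^p$ with no dyadic sum to perform, and the first factor is the quantity $\M_1^{(1)}$ in \eqref{M1j} (with $j=1$).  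This is a sup over $n$, and for fixed $n$ one absorbs $\langle n\rangle^{sp'}$ into $\langle n_{\max}\rangle^{sp'}$, then sums the remaining two weighted frequencies using Lemma~\ref{LEM:count1}; the $K^{4(p-1)}$ arises from summing over the $\sim K^{4p}$ admissible modulations.  The point is that the ``max'' frequency never has to be summed---it is pinned by $n$---which is exactly what your arrangement loses.
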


We now apply a normal for reduction to the non-resonant part  $\NN^{(1)}_2$
in the form of ``differentiation by parts'' \cite{BIT}:
\begin{equation}
\begin{aligned}
\NN ^{(1)}_2(\uu  )(t, n)
&=\sum_{A_1(n)^c}\partial_t\bigg(\frac{e^{it\Phi(\bar n)}}{\Phi(\bar n)}\bigg)\widehat{\uu  }_{n_1}\overline{\widehat{\uu  }}_{n_2} \widehat{\uu  }_{n_3} \\
&=\sum_{A_1(n)^c}\partial_t\Big(\frac{e^{it\Phi (\bar n)}}{\Phi (\bar n)}\widehat{\uu  }_{n_1}\overline{\widehat{\uu  }}_{n_2} \widehat{\uu  }_{n_3} \Big)- \sum_{A_1(n)^c}\frac{e^{it\Phi (\bar n)}}{\Phi (\bar n)}\partial_t(\widehat{\uu  }_{n_1}\overline{\widehat{\uu  }}_{n_2} \widehat{\uu  }_{n_3})\\&=\partial_t\Big(\sum_{A_1(n)^c}\frac{e^{it\Phi (\bar n)}}{\Phi (\bar n)}\widehat{\uu  }_{n_1}\overline{\widehat{\uu  }}_{n_2} \widehat{\uu  }_{n_3} \Big)
- \sum_{j = 1}^3
\sum_{A_1(n)^c}\frac{e^{it\Phi (\bar n)}}{\Phi (\bar n)}
\dt \ft \uu_{n_ j}^* 
\prod_{\substack{k = 1\\k \ne j }}^3 
\ft \uu_{n_k}^* \\
&=: \partial_t\NN _0^{(2)}(\uu  )(t, n)+\wt \NN^{(2)}(\uu  )(t, n), 
\end{aligned}
\label{NN21}      
 \end{equation}

\noi
 where 
 \begin{align*}
 \ft \uu_{n_j}^*
= \begin{cases}
 \ft \uu_{n_j},  & \text{when $j = 1, 3$,}\rule[-3mm]{0pt}{0pt} \\
\cj{ \ft \uu_{n_j}}, & \text{when $j = 2$.}
\end{cases}
\end{align*}

 \noi
By  substituting \eqref{HNLS3},   
we have 
 \begin{equation}\label{NN21a}
\begin{aligned}
\widetilde{\NN }^{(2)}(\uu  )(t, n)
& =  \NN ^{(2)}(\uu  )(t, n)+ \RR ^{(2)}(\uu  )(t, n)\\
: \! &=
- \sum_{j = 1}^3
\sum_{A_1(n)^c}\frac{e^{it\Phi (\bar n)}}{\Phi (\bar n)}
\big( \NN ^{(1)}(\uu  )\big)^*(n_j)
\prod_{\substack{k = 1\\k \ne j }}^3 
\ft \uu_{n_k}^* \\
& \quad  
- \sum_{j = 1}^3
\sum_{A_1(n)^c}\frac{e^{it\Phi (\bar n)}}{\Phi (\bar n)}
\big( \RR ^{(1)}(\uu  )\big)^*(n_j)
\prod_{\substack{k = 1\\k \ne j }}^3 
\ft \uu_{n_k}^*.
%
%
%-\sum_{A_1(n)^c}\frac{e^{it\Phi (\bar n)}}{\Phi (\bar n)}\Big(\RR ^{(1)}(\uu  )(n_1)\overline{\widehat{\uu  }}_{n_2} \widehat{\uu  }_{n_3}+\widehat{\uu  }_{n_1}\overline{\RR ^{(1)}(\uu  )(n_2)} \widehat{\uu  }_{n_3}+\widehat{\uu  }_{n_1}\overline{\widehat{\uu  }}_{n_2} \RR ^{(1)}(\uu  )(n_3)\Big)\\&\quad-\sum_{A_1(n)^c}\frac{e^{it\Phi (\bar n)}}{\Phi (\bar n)}\Big(\NN ^{(1)}(\uu  )(n_1)\overline{\widehat{\uu  }}_{n_2} \widehat{\uu  }_{n_3}+\widehat{\uu  }_{n_1}\overline{\NN ^{(1)}(\uu  )(n_2)} \widehat{\uu  }_{n_3}+\widehat{\uu  }_{n_1}\overline{\widehat{\uu  }}_{n_2} \NN ^{(1)}(\uu  )(n_3)\Big)\\&
     \end{aligned}
 \end{equation}

\noi
See
Lemmas \ref{LEM:N0j} and \ref{LEM:Rj}
for estimates on  the boundary term $\NN_0^{(2)}(\uu)$ in \eqref{NN21}
and $\RR ^{(2)}(\uu  )$ in~\eqref{NN21a}.
As for the term  $\NN ^{(2)}(\uu)$ in \eqref{NN21a}, 
we need to split it into the nearly resonant and non-resonant parts
and apply a normal form reduction on the non-resonant part.
We then iterate this process indefinitely.

 \begin{remark} \label{REM:just1} \rm 
At the third step of \eqref{NN21}, 
we formally switched 
the order of the sum and the time differentiation
in the first term and also applied the product rule
on the summand in the second term.
As observed in  \cite{GKO}, 
these can be justified in the classical sense 
if $\uu \in  C([0, T]; L^3(\T))$;
see also the embeddings 
\eqref{emb0} and 
\eqref{emb1}.
A similar comment applies to the computation in \eqref{NN2j}.

 \end{remark}

 \subsection{General step}
 \label{SUBSEC:NF1b}

Before proceeding further, we first 
recall the notion of (ternary) ordered trees and relevant  definitions
from \cite{GKO}; see also \cite{OW2}.

\begin{definition} \label{DEF:tree1} \rm
\textup{(i)} 
Given a partially ordered set $\TT$ with partial order $\leq$, 
we say that $b \in \TT$ 
with $b \leq a$ and $b \ne a$
is a child of $a \in \TT$,
if  $b\leq c \leq a$ implies
either $c = a$ or $c = b$.
If the latter condition holds, we also say that $a$ is the parent of $b$.

\smallskip
\noi
\textup{(ii)}
A tree $\TT$ is a finite partially ordered set satisfying
the following properties.
\begin{itemize}
\item Let $a_1, a_2, a_3, a_4 \in \TT$.
If $a_4 \leq a_2 \leq a_1$ and  
$a_4 \leq a_3 \leq a_1$, then we have $a_2\leq a_3$ or $a_3 \leq a_2$,

\item
A node $a\in \TT$ is called terminal, if it has no child.
A non-terminal node $a\in \TT$ is a node 
with  exactly three children denoted by $a_1, a_2$, and $a_3$,

\item There exists a maximal element $r \in \TT$ (called the root node) such that $a \leq r$ for all $a \in \TT$.
We assume that the root node is non-terminal,

\item $\TT$ consists of the disjoint union of $\TT^0$ and $\TT^\infty$,
where $\TT^0$ and $\TT^\infty$
denote  the collections of non-terminal nodes and terminal nodes, respectively.
\end{itemize}

\noi
The number $|\TT|$ of nodes in a tree $\TT$ is $3j+1$ for some $j \in \NB$,
where $|\TT^0| = j$ and $|\TT^\infty| = 2j + 1$.
Let us denote  the collection of trees in the $j$th generation
by $T(j)$:
\begin{equation*}
T(j) := \{ \TT : \TT \text{ is a tree with } |\TT| = 3j+1 \}.
\end{equation*}

\noi
Note that $\TT \in T(j)$ contains  $j$ parental nodes.

\smallskip

\noi
(iii) (ordered tree) We say that a sequence $\{ \TT_j\}_{j = 1}^J$ is a chronicle of $J$ generations, 
if 
\begin{itemize}
\item $\TT_j \in {T}(j)$ for each $j = 1, \dots, J$,
\item  $\TT_{j+1}$ is obtained by changing one of the terminal
nodes in $\TT_j$ into a non-terminal node (with three children), $j = 1, \dots, J - 1$.
\end{itemize}

\noi
Given a chronicle $\{ \TT_j\}_{j = 1}^J$ of $J$ generations,  
we refer to $\TT_J$ as an {\it ordered tree} of the $J$th generation.
We denote the collection of the ordered trees of the $J$th generation
by $\mathfrak{T}(J)$.
Note that the cardinality of $\mathfrak{T}(J)$ is given by 
\begin{equation} \label{cj1}
 |\mathfrak{T}(J)| = 1\cdot3 \cdot 5 \cdot \cdots \cdot (2J-1)
 = (2J-1)!!.
 \end{equation}

\smallskip

\noi
(iv) Let $\TT_J$ be an ordered tree of the $J$th generation
with a chronicle $\{ \TT_j\}_{j = 1}^J$.
Given $k = 1, \dots, J-1$, 
we define the projection $\pi_k: 
\mathfrak{T}(J) \to \mathfrak{T}(k)$ by setting
\begin{align}
\pi_k(\TT_J) = \TT_k
\quad \text{endowed with the chronicle $\{ \TT_j\}_{j = 1}^k$}.
\label{cj2}
\end{align}

\end{definition}

Roughly speaking, 
 ordered trees are the trees
 that remember how they ``grew''.
This property turned out to be convenient in encoding 
successive applications
of the product rule for differentiation \cite{GKO, OW2}.
In the following, we simply refer to an ordered tree $\TT_J$ of the $J$th generation
but 
it is understood that there is an underlying chronicle $\{ \TT_j\}_{j = 1}^J$.

\begin{remark}\rm

We point out that  ordered trees defined above 
 are very similar to the heap-ordered trees in 
 \cite{FU}; they are just a different way to describe the arborification.
 See also Remark \ref{REM:Bruned}.

\end{remark}

Given a tree $\TT$, we associate each terminal node 
$a\in \TT^\infty$ with the Fourier coefficient (or its complex conjugate) of the interaction representation $\uu$ and sum over all possible frequency assignments. 
In order to do this, we introduce the index function $\n$ assigning frequencies 
to all the nodes in $\TT$ in a consistent manner.

\begin{definition}[index function] \label{DEF:tree4} \rm
Given an ordered tree $\TT$ (of the $J$th generation for some $J \in \NB$), 
we define an index function ${\bf n}: \TT \to \Z^2$ such that,
\begin{itemize}
\item[(i)] $n_a = n_{a_1} - n_{a_2} + n_{a_3}$ for $a \in \TT^0$,
where $a_1, a_2$, and $a_3$ denote the children of $a$,
\item[(ii)] $\{n_a, n_{a_2}\} \cap \{n_{a_1}, n_{a_3}\} = \emptyset$ for $a \in \TT^0$,

\item[(iii)] 
$|\mu_1|:=\big||n_r|_-^2-|n_{r_1}|_-^2+|n_{r_2}|_-^2-|n_{r_3}|_-^2\big|>(3K)^{4p}$,\footnote{Recall that we are on $A_1(n)^c$. See \eqref{A1}.} where $r$ is the root node with its children $r_1$, $r_2$, and $r_3$.

\end{itemize}

\noi
where  we identified ${\bf n}: \TT \to \Z$ 
with $\{n_a \}_{a\in \TT} \in (\Z^2)^\TT$. 
We use 
$\mathfrak{N}(\TT) \subset (\Z^2)^\TT$ to denote the collection of such index functions ${\bf n}$.

\end{definition}

\medskip

Given an ordered tree 
$\TT_J$ of the $J$th generation with the chronicle $\{ \TT_j\}_{j = 1}^J$ 
and associated index functions ${\bf n} \in \mathfrak{N}(\TT_J)$,
 we use superscripts to 
  denote  ``generations'' of frequencies.

Fix ${\bf n} \in \mathfrak{N}(\TT_J)$.
Consider $\TT_1$ of the first generation.
Its nodes consist of the root node $r$
and its children $r_1, r_2, $ and $r_3$. 
We define the first generation of frequencies by
\begin{align*}
\big(n^{(1)}, n^{(1)}_1, n^{(1)}_2, n^{(1)}_3\big) :=(n_r, n_{r_1}, n_{r_2}, n_{r_3}).
\end{align*}

\noi
 The ordered tree $\TT_2$ of the second generation is obtained from $\TT_1$ by
changing one of its terminal nodes $a = r_k \in \TT^\infty_1$ for some $k \in \{1, 2, 3\}$
into a non-terminal node.
Then, we define
the second generation of frequencies by
\[\big(n^{(2)}, n^{(2)}_1, n^{(2)}_2, n^{(2)}_3\big) :=(n_a, n_{a_1}, n_{a_2}, n_{a_3}).\]

\noi
Note that  we have $n^{(2)} = n_k^{(1)} = n_{r_k}$ for some $k \in \{1, 2, 3\}$.
This corresponds to introducing a new set of frequencies
after the first differentiation by parts.

After  $j - 1$ steps, the ordered tree $\TT_j$ 
of the $j$th generation is obtained from $\TT_{j-1}$ by
changing one of its terminal nodes $a  \in \TT^\infty_{j-1}$
into a non-terminal node.
Then, we define
the $j$th generation of frequencies by
\[\big(n^{(j)}, n^{(j)}_1, n^{(j)}_2, n^{(j)}_3\big) :=(n_a, n_{a_1}, n_{a_2}, n_{a_3}).\]

\noi
Note that these frequencies
satisfies (i) and (ii) 
in Definition \ref{DEF:tree4}.

Lastly, we use $\mu_j$  to denote the corresponding modulation function introduced at the $j$th generation:
\begin{align}\label{mu1}
    \mu_j=\mu_j(n^{(j)}, n^{(j)}_1, n^{(j)}_2, n^{(j)}_3):= |n^{(j)}|_-^2-|n^{(j)}_1|_-^2+|n^{(j)}_2|_-^2-|n^{(j)}_3|_-^2.
\end{align}

\noi
Then,  by setting
\begin{align}\label{mu2}
\wt \mu_j =\sum_{k=1}^j\mu_k, 
\end{align}

\noi
we  define the nearly resonant set $A_j$ for the $j$th generation:
\begin{align}
 A_j=\big\{\mathbf{n}\in \mathfrak{N}(\TT_j): |\widetilde{\mu}_j |\leq ((2j+1)K)^{4p}\big\} .  
\label{Aj}
\end{align}

\noi
Compare this with \eqref{A1}.

\begin{remark} \label{REM:terminal}
\rm 

Note that ${\bf n} = \{n_a\}_{a\in\TT}$ is completely determined
once we specify the values $n_a$ for $a \in \TT^\infty$.
Moreover, 
given an ordered tree $\TT_j$ of the $j$th generation, we have 
\begin{align*}
n_ r = \sum_{a \in \TT_j^\infty} \kk_a n_{a}
\end{align*}

\noi
for some appropriate choices of $\kk_a \in \{+1, -1\}$.
Recalling 
$|\TT^\infty_j| = 2j + 1$, 
this implies that the symbol 
\[\sum_{\substack{\mathbf{n}\in \mathfrak{N}(\TT_{j})\\\mathbf{n}_r=n}}\]

\noi
means that we are performing $2j$ summations
on $n_a$, $a \in \TT_j^\infty \setminus \{b\}$
by first fixing $b\in \TT_j^\infty$.

\end{remark}

\begin{remark}\rm
For simplicity of notation, 
we may drop the minus signs, the complex number $i$, 
and the complex conjugate sign in the following
when they do not play an important role.
\end{remark}

\medskip

Let us now go back to the normal form procedure.
As mentioned above, we split $\NN^{(2)}$ in~\eqref{NN21a}
into the nearly
resonant part
 $\NN^{(2)}_1$ (as the restriction
 onto $A_2$ defined in \eqref{Aj}) 
and  the non-resonant part
 $\NN^{(2)}_2$ and apply the second normal form reduction 
 to 
$\NN^{(2)}_2$.
This formal procedure is precisely the same as those in \cite{GKO, OW2}.

After $(j-1)$ steps,  we are left with the following the non-resonant part:
\begin{align}\label{N20}
 \NN _2^{(j)}(\uu  )(n)=\sum_{\TT_{j}\in \mathfrak{T}(j)} \sum_{\substack{\mathbf{n}\in \mathfrak{N}(\TT_{j})\\\mathbf{n}_r=n}} \ind_{\bigcap_{k=1}^{j}\text{A}_k^c}\frac{e^{i t\widetilde{\mu}_{j}}}{\prod_{k=1}^{j-1}\widetilde{\mu}_k} \prod_{a\in \TT^{\infty}_{j}}\widehat{\uu  }_{n_a},
\end{align}

\noi
to which we apply a normal form reduction:
\begin{equation}\label{NN2j}
 \begin{aligned}
     \NN _2^{(j)}(\uu  )(n)&=\partial_t\Big(\sum_{\TT_j\in\mathfrak{T}(j)}\sum_{\substack{\mathbf{n}\in\mathfrak{N}(\TT_j)\\ \mathbf{n}_r=n}} \ind_{\bigcap_{k=1}^{j}\text{A}_k^c}\frac{e^{i t\widetilde{\mu}_j}}{\prod_{k=1}^j\widetilde{\mu}_k} \prod_{a\in \TT^{\infty}_{j}}\widehat{\uu  }_{n_a}\Big)\\&\quad +\sum_{\TT_j\in\mathfrak{T}(j)}\sum_{\substack{\mathbf{n}\in\mathfrak{N}(\TT_j)\\\mathbf{n}_r=n}}\sum_{b\in\TT_j^{\infty}}\ind_{\bigcap_{k=1}^{j}\text{A}_k^c}\frac{e^{it\widetilde{\mu}_j}}
     {\prod_{k=1}^j\widetilde{\mu}_k} \RR ^{(1)}(\uu  )(n_b)\prod_{a\in \TT^{\infty}_{j}\setminus \{b\}}\widehat{\uu  }_{n_a}\\&\quad +\sum_{\TT_j\in\mathfrak{T}(j)}\sum_{\substack{\mathbf{n}\in\mathfrak{N}(\TT_j)\\\mathbf{n}_r=n}}\sum_{b\in\TT_j^{\infty}}\ind_{\bigcap_{k=1}^{j}\text{A}_k^c}\frac{e^{it\widetilde{\mu}_j}}{\prod_{k=1}^j\widetilde{\mu}_k} \NN ^{(1)}(\uu  )(n_b)\prod_{a\in \TT^{\infty}_{j}\setminus \{b\}}\widehat{\uu  }_{n_a}\\&=\partial_t\Big(\sum_{\TT_j\in\mathfrak{T}(j)}\sum_{\substack{\mathbf{n}\in\mathfrak{N}(\TT_j)\\\mathbf{n}_r=n}}\ind_{\bigcap_{k=1}^{j}\text{A}_k^c}\frac{e^{it\widetilde{\mu}_j}}{\prod_{k=1}^j\widetilde{\mu}_k} \prod_{a\in \TT^{\infty}_{j}}\widehat{\uu  }_{n_a}\Big)\\& \quad+\sum_{\TT_j\in\mathfrak{T}(j)}\sum_{\substack{\mathbf{n}\in\mathfrak{N}(\TT_j)\\\mathbf{n}_r=n}}\sum_{b\in\TT_j^{\infty}}\ind_{\bigcap_{k=1}^{j}\text{A}_k^c}\frac{e^{it\widetilde{\mu}_j}}{\prod_{k=1}^j\widetilde{\mu}_k} \RR ^{(1)}(\uu  )(n_b)\prod_{a\in \TT^{\infty}_{j}\setminus \{b\}}\widehat{\uu  }_{n_a}\\&\quad+\sum_{\TT_{j+1}\in\mathfrak{T}(j+1)}\sum_{\substack{\mathbf{n}\in\mathfrak{N}(\TT_{j+1})\\\mathbf{n}_r=n}}\ind_{\bigcap_{k=1}^{j}\text{A}_k^c}\frac{e^{i t \widetilde{\mu}_{j+1}}}{\prod_{k=1}^j\widetilde{\mu}_k} \prod_{a\in \TT^{\infty}_{j+1}}\widehat{\uu  }_{n_a}\\&=: \partial_t\NN _0^{(j+1)}(\uu  )(n)+\RR ^{(j+1)}(\uu  )(n)+\NN ^{(j+1)}(\uu  )(n).
 \end{aligned}   
\end{equation}

\noi
We then  split $\NN ^{(j+1)}$ as 
\begin{align}\label{NN2ja}
\NN ^{(j+1)}=\NN ^{(j+1)}_1+\NN ^{(j+1)}_2,  
\end{align}
where $\NN ^{(j+1)}_1$ is the restriction of $\NN ^{(j+1)}$ onto $A_{j+1}$ and 
$\NN ^{(j+1)}_2:=\NN ^{(j+1)}-\NN ^{(j+1)}_1$, 
and apply a normal form reduction to the non-resonant part
$\NN ^{(j+1)}_2$.

\subsection{Normal form equation}
\label{SUBSEC:NF1c}

After applying $J-1$ normal form reductions, 
we arrive at the following formulation:
\begin{align}\label{NFE1}
\begin{split}
 \uu (t)     
&  =\uu  (0)+\sum_{j=2}^J\NN _0^{(j)}(\uu  )(t)- \sum_{j=2}^J\NN _0^{(j)}(\uu  )(0) 
 \\
 &\quad +\int_0^t
 \bigg\{ \sum_{j=1}^J\NN _1^{(j)}(\uu  )(t')
 +\sum_{j=1}^J\RR ^{(j)}(\uu  )(t')\bigg\}dt'
+\int_0^t \NN _2^{(J)}(\uu  )(t') dt'.
\end{split}
\end{align}

The following lemma shows that the error term
(the last term on the right-hand side of~\eqref{NFE1})
tends to $0$ as $J \to \infty$.
See Subsection \ref{SUBSEC:NF2c} for a proof.

\begin{lemma}\label{LEM:N2j}
 Let $\NN _2^{(J)}$ be as in \eqref{N20}.
 Suppose that 
  $ 1< p < \infty$ and $s \in \R$  satisfy 
 \eqref{emb0a} or \eqref{emb2}.
Then,  
 we have
\begin{align}\label{N2jx}
\| \NN _2^{(J)}(\uu  )(t)\|_{\mathcal{F}L^{\infty}(\T^2)}\longrightarrow 0 , 
\end{align}

\noi
as $J\rightarrow\infty$.
The convergence is 
uniform in $t \in \R$
and
locally uniform in 
$\mathcal{F}L^{s,p}(\T^2)$.

\end{lemma}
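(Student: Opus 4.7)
The plan is to obtain a pointwise bound on $|\NN_2^{(J)}(\uu)(n)|$ that decays geometrically in $J$, with rate controlled by the cutoff parameter $K$. First, I would take absolute values in~\eqref{N20} and exploit the non-resonant cutoff: on $\bigcap_{k=1}^{J}A_k^c$, the definition~\eqref{Aj} gives $|\widetilde{\mu}_k| > ((2k+1)K)^{4p}$ for $k=1,\dots,J-1$, which allows us to extract
\begin{align*}
\frac{1}{\prod_{k=1}^{J-1}|\widetilde{\mu}_k|} < \frac{1}{K^{4p(J-1)}\prod_{k=1}^{J-1}(2k+1)^{4p}}.
\end{align*}
The problem then reduces to bounding, for each ordered tree $\TT_J\in\mathfrak{T}(J)$, the multilinear sum $\sum_{\mathbf{n}\in\mathfrak{N}(\TT_J),\,\mathbf{n}_r=n}\prod_{a\in\TT^\infty_J}|\widehat{\uu}(n_a)|$.

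For each fixed $\TT_J$, I would establish a per-tree bound of the form
\begin{align*}
\sup_{n\in\Z^2}\sum_{\substack{\mathbf{n}\in \mathfrak{N}(\TT_J)\\ \mathbf{n}_r=n}}\prod_{a\in \TT^\infty_J}|\widehat{\uu}(n_a)| \leq C(s,p)^{J}\,\|\uu\|_{\F L^{s,p}}^{2J+1},
\end{align*}
where $C(s,p)>0$ is independent of $J$ and $n$. The plan is to dyadically decompose $|n_a|\sim N_a$ at each terminal node, use $|\widehat{\uu}(n_a)| \leq \langle n_a\rangle^{-s}\|\uu\|_{\F L^{s,p}}$ (from $\ell^p\hookrightarrow\ell^\infty$), and then iteratively apply the hyperbolic counting estimate of Lemma~\ref{LEM:count1} generation by generation along $\TT_J$. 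At each generation $k$, the parent frequency $n^{(k)}$ is already determined from earlier generations, so Lemma~\ref{LEM:count1} bounds the number of admissible triples $(n^{(k)}_1,n^{(k)}_2,n^{(k)}_3)$ with prescribed dyadic sizes. Combined with the decay $\prod_a N_a^{-s}$, the regularity hypothesis on $(s,p)$ in~\eqref{emb0a} or~\eqref{emb2} ensures that the resulting dyadic sum converges with only a geometric-in-$J$ cost per generation.

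To finish, I would combine the above with the cardinality $|\mathfrak{T}(J)|=(2J-1)!!$ from~\eqref{cj1} together with the identity $\prod_{k=1}^{J-1}(2k+1) = (2J-1)!!$, which yields
\begin{align*}
\frac{(2J-1)!!}{\prod_{k=1}^{J-1}(2k+1)^{4p}} = \frac{1}{((2J-1)!!)^{4p-1}} \lesssim 1
\end{align*}
for $p>1$. Putting everything together produces
\begin{align*}
\|\NN_2^{(J)}(\uu)(t)\|_{\F L^{\infty}} \lesssim \|\uu\|_{\F L^{s,p}}^{3}\left(\frac{C(s,p)\,\|\uu\|_{\F L^{s,p}}^{2}}{K^{4p}}\right)^{J-1},
\end{align*}
which tends to $0$ as $J\to\infty$, uniformly in $t$ and locally uniformly in $\uu\in\F L^{s,p}(\T^2)$, provided $K$ is chosen large enough so that $C(s,p)\|\uu\|_{\F L^{s,p}}^{2} < K^{4p}$. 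This is consistent with the choice of $K = K(\|u(0)\|_{\F L^{s,p}})$ made when applying the normal form reduction.

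The hard part will be the per-tree multilinear estimate in the second step: executing the iterated application of Lemma~\ref{LEM:count1} along an arbitrary ordered tree and keeping only a geometric-in-$J$ constant (as opposed to a factorial growth, which would be fatal after summing over the $(2J-1)!!$ trees). This requires careful bookkeeping of which children at each generation are terminal (and thus contribute the $\langle n_a\rangle^{-s}$ decay coming from $\uu\in\F L^{s,p}$) versus non-terminal (whose dyadic sizes re-enter the counting at later generations), and arranging the application of Lemma~\ref{LEM:count1} so that the regularity thresholds in~\eqref{emb0a} or~\eqref{emb2} are exactly what is needed to close the dyadic sum at each step.
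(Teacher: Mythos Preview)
There is a genuine gap in the second step.  Lemma~\ref{LEM:count1} counts lattice points on a \emph{fixed} level set $\Gamma_\mu^-(n)$, i.e.~with the modulation $\mu$ prescribed.  Once you replace the denominators $\prod_{k=1}^{J-1}|\widetilde{\mu}_k|$ by their lower bounds and pass to the unrestricted sum $\sum_{\mathbf{n}\in\mathfrak{N}(\TT_J),\,\mathbf{n}_r=n}\prod_a|\widehat{\uu}(n_a)|$, there is no longer any constraint on $\mu_k$ at each generation (the condition $|\widetilde{\mu}_k|>((2k+1)K)^{4p}$ is only a lower bound and does not reduce the count).  For a fixed parent frequency $n^{(k)}$ but free $\mu_k$, the number of triples with $|n_1^{(k)}|\sim N_1$, $|n_3^{(k)}|\sim N_3$ is of order $N_1^2N_3^2$, not $N_1N_3$, so the dyadic sums do not close under~\eqref{emb0a} or~\eqref{emb2}.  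Relatedly, replacing each $|\widehat{\uu}(n_a)|$ by $\langle n_a\rangle^{-s}\|\uu\|_{\F L^{s,p}}$ via $\ell^p\hookrightarrow\ell^\infty$ discards the $\ell^p$-summability of $\widehat{\uu}$, and even if you reinstate the level-set decomposition $\mu_k=\alpha_k$ afterwards, you are left with (i) a counting argument that requires $s>1$ rather than $s>1-\tfrac1p$, and (ii) a divergent sum $\sum_{\alpha_k}|\widetilde{\alpha}_k|^{-1}$.

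The paper avoids both problems by a H\"older step in the frequency summation \emph{before} separating the modulation denominators from the data: this produces an $\ell^{p'}$-norm of the multiplier (containing $\prod_k|\widetilde{\mu}_k|^{-p'}$ and the weights $\langle n_a\rangle^{-sp'}$) against the $\ell^p$-norm of the Fourier coefficients.  The multiplier is then split, via a further sup--sum step over the values $\mu_k=\alpha_k$, into the convergent series $\S^{(J)}=\sum_{\alpha}\prod_k\max\{|\widetilde{\alpha}_k|,((2k+1)K)^{4p}\}^{-p'}$ (here $p'>1$ is essential) and the quantity $\widetilde\K^{(J)}$, which is bounded inductively using Lemma~\ref{LEM:count1} at each generation with the weaker requirement $sp'>1$.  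The decomposition $\NN_2^{(J)}=\NN^{(J)}-\NN_1^{(J)}$ and the rewriting of $\NN^{(J)}$ with one factor $\NN^{(1)}(\uu)(n_b)$ at a terminal node is what brings in the embeddings~\eqref{emb0} and~\eqref{emb1}, hence the specific hypotheses~\eqref{emb0a}/\eqref{emb2}; your outline never uses these embeddings.  Finally, note that the paper's bound carries an extra factorial factor $((2J-1)!!)^{-3}$, which is what yields the \emph{locally uniform} convergence in $\F L^{s,p}$ for every $K\ge1$; your purely geometric bound would give convergence only for $K$ large depending on $\|\uu\|_{\F L^{s,p}}$.
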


Hence, by taking $J \to \infty$ in \eqref{NFE1}, 
we obtain the normal form equation at the level of the interaction representation
$\uu(t) = S(-t) u(t)$:
\begin{equation}
\begin{aligned}
 \uu  (t)& =\uu  (0)+\sum_{j=2}^{\infty}\NN _0^{(j)}(\uu  )(t)- \sum_{j=2}^{\infty}\NN _0^{(j)}(\uu  )(0) 
 \\&\quad +\int_0^t
 \bigg\{ \sum_{j=1}^{\infty}\NN _1^{(j)}(\uu  )(t')
 +\sum_{j=1}^{\infty}\RR ^{(j)}(\uu  )(t')\bigg\}dt'
    \end{aligned}
\label{NF1}
\end{equation}

\noi
where  $\{\NN _0^{(k)}\}_{k=2}^{\infty}$, $\{\NN _1^{(k)}\}_{k=1}^{\infty}$, and $\{\RR ^{(k)}\}_{k=1}^{\infty}$ are as in \eqref{NN2ja}.
Finally, by setting
\begin{align}
\begin{split}
\Nf^{(j)}_0(u)(t) & =    S(t) \NN^{(j)}_0(S(-\,\cdot)u)(t),\\
\Nf^{(j)}_1(u)(t) & =  S(t) \NN^{(j)}_1(S(-\,\cdot)u)(t),\\ 
\Rf^{(j)}(u)(t) & =  S(t)  \RR^{(j)}(S(-\,\cdot)u)(t), 
\end{split}
\label{NF1b}
\end{align}

\noi
we obtain the normal form equation \eqref{NF1a} from \eqref{NF1}.
As observed  in \cite{OW2}, 
 the multilinear operators 
$ \Nf^{(j)}_0$, $\Nf^{(j)}_1$, and
$\Rf^{(j)}$
are autonomous.

\begin{remark}\label{REM:UU}\rm

As in \cite{GKO, OW2}, 
the convergence \eqref{N2jx} in Lemma \ref{LEM:N2j}
takes place in a weaker topology (i.e.~$\FL^{\infty}(\T^2)$), 
while we assume a higher regularity for the input
function $\uu \in \FL^{s, p}(\T^3) \subset L^3(\T^2) \cup \F L^\frac32 (\T^2)$.
We, however, point out that we also need $s > 1- \frac 1p$
to bound the multilinear terms in \eqref{NF1}, 
and hence the regularity \eqref{emb3} is required.

%for   $ 1< p < \infty$ and $s \in \R$  satisfying
% \eqref{emb0a} or \eqref{emb2}.

\end{remark}

\begin{remark}\label{REM:Bruned} \rm
In a recent preprint
\cite{Bruned}, 
Bruned
provided a derivation of the normal form equation 
(for the cubic elliptic NLS on $\T$) from the algebraic viewpoint.
This was done by 
combining the Fourier decorated trees introduced in 
\cite{BSch}, the Butcher series formalism, and the arborification coming from Ecalle's theory on moulds 
\cite{EV, FM}, 
showing that 
 Hopf algebras also appear naturally in the context of dispersive equations.
\end{remark}

\subsection{On the multilinear operators}
\label{SEC:NF1d}

We conclude this section by stating 
bounds on 
the multilinear 
operators appearing in our normal form reductions.
We prove these lemmas in the next section.

\begin{lemma}\label{LEM:N0j}
Let $\NN _0^{(j)}$ be as in \eqref{NN2j}.
 Let $1\leq p<\infty$ and  $s>1-\frac{1}{p}$. 
 Then, there exists constant $C_p>0$ such that   
 \begin{align}\label{N0x}
 \| \NN _0^{(j)}(\uu  )\|_{\mathcal{F}L^{s,p}(\T^2)}\leq 
 C_p\frac{K^{4(1-j)}}{((2j-1)!!)^{2}}\| \uu  \|^{2j-1}_{\mathcal{F}L^{s,p}(\T^2)}
\end{align}
for any integer $j\geq2$ and $K\geq 1$.
\end{lemma}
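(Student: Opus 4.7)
The strategy is the tree/chronicle analysis developed in \cite{GKO,OW2}, with the elliptic divisor bound replaced by the sharp hyperbolic counting estimate in Lemma~\ref{LEM:count1}. Unfolding the recursion \eqref{NN2j} (with the index shift $j\mapsto j-1$) writes $\NN_0^{(j)}(\uu)(n)$ as a sum over ordered trees $\TT_{j-1}\in\mathfrak{T}(j-1)$ of a $(2j-1)$-linear expression with phase factor $e^{it\widetilde{\mu}_{j-1}}/\prod_{k=1}^{j-1}\widetilde{\mu}_k$, supported on $\bigcap_{k=1}^{j-1}A_k^c$. By \eqref{cj1}, $|\mathfrak{T}(j-1)|=(2j-3)!!$, so taking the $\mathcal{F}L^{s,p}$-norm, passing the absolute value inside, and applying the triangle inequality over trees reduces matters to a per-tree bound of the form $C_p K^{-4(j-1)}((2j-1)!!)^{-3}\|\uu\|_{\mathcal{F}L^{s,p}}^{2j-1}$.

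I would then exploit the modulation cutoff. On $A_k^c$, \eqref{Aj} gives $|\widetilde{\mu}_k|>((2k+1)K)^{4p}$. I would split each factor as $|\widetilde{\mu}_k|^{-1}=|\widetilde{\mu}_k|^{-(1-1/p)}\cdot|\widetilde{\mu}_k|^{-1/p}$, bound the second factor pointwise by $((2k+1)K)^{-4}$, and take the product over $k=1,\dots,j-1$. Using $\prod_{k=1}^{j-1}(2k+1)=(2j-1)!!$, this yields a deterministic prefactor of order $K^{-4(j-1)}\cdot((2j-1)!!)^{-4}$, which absorbs the tree count $(2j-3)!!$ and leaves a surplus of order $(2j-1)\cdot(2j-1)!!$ to be spent on the per-branching estimates. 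The remaining weights $|\widetilde{\mu}_k|^{-(1-1/p)}$ are reserved as summability weights for the frequency sums.

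For the frequency summation, I would walk through the tree one generation at a time. At the $k$-th internal node $a$, the three children satisfy $n_{a_1}-n_{a_2}+n_{a_3}=n_a$ with $n_a\neq n_{a_1},n_{a_3}$, and the increment $\mu_k$ is as in \eqref{mu1}. A dyadic decomposition of the children's frequencies combined with Lemma~\ref{LEM:count1} (using \eqref{count1} or \eqref{count2}, depending on which pair of children serves as summation variables) bounds the number of admissible configurations at fixed $n_a$ and fixed $\mu_k$ by $N_iN_j\,\max(N_i,N_j)^{\theta}$. Distributing $\jb{n_a}^s\lesssim\max_i\jb{n_{a_i}}^s$ and applying H\"older's inequality in $\ell^p$, this growth is absorbed by $\prod_i\jb{n_{a_i}}^s$ under the hypothesis $s>1-\frac{1}{p}$, while the retained modulation weight $|\widetilde{\mu}_k|^{-(1-1/p)}$ supplies the needed summability in $\mu_k$. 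Each branching then contributes a universal constant $C_p$; iterating over the $j-1$ branchings produces $C_p^{j-1}$, which together with $(2j-3)!!$ is absorbed by the surplus from the previous paragraph.

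The principal obstacle is the per-branching estimate itself: the hyperbolic counting bound in Lemma~\ref{LEM:count1} is strictly weaker than its elliptic counterpart by a factor of size $N_{\max}^{\theta}$, forcing the regularity threshold $s>1-\frac{1}{p}$ to be attained with essentially no slack, exactly as foreshadowed in Remark~\ref{REM:diff1}. Ensuring that the per-branching constant $C_p$ is uniform across the position of the branching in the chronicle and independent of the choice of summation variables is the main bookkeeping issue, and is what ultimately permits summing over the $(2j-3)!!$ ordered trees without degradation in $j$.
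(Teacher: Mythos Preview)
Your overall architecture---H\"older to factor out a multiplier, then an inductive counting argument along the chronicle using Lemma~\ref{LEM:count1}---matches the paper's. The gap is in how you handle the modulation weights. Your split $|\widetilde{\mu}_k|^{-1}=|\widetilde{\mu}_k|^{-1/p'}\cdot|\widetilde{\mu}_k|^{-1/p}$, with the second factor extracted pointwise, leaves only $|\widetilde{\mu}_k|^{-1/p'}$ for the frequency/modulation summation. After the H\"older step that isolates the $\ell^{p'}$ multiplier, this weight becomes $|\widetilde{\mu}_k|^{-1}$. The counting bound in Lemma~\ref{LEM:count1} is uniform in $\mu$, so once you have bounded the frequency sum at fixed $\mu_k=\alpha_k$ by a constant (your ``per-branching'' estimate), you must still perform the sum $\sum_{\alpha_k}|\widetilde{\alpha}_k|^{-1}$ over $\{|\widetilde{\alpha}_k|>((2k+1)K)^{4p}\}$, and this diverges logarithmically. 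No slack from $sp'>1$ can be transferred here, since that slack lives in the spatial-frequency variables, not in the modulation variable.

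The paper avoids this by not splitting pointwise. After H\"older, the full weight $\prod_k|\widetilde{\mu}_k|^{-p'}$ remains in the multiplier $\M_0^{(j)}$ of \eqref{M0j}. Conditioning on the values $\mu_k=\alpha_k$ then gives the factorization \eqref{eta5}: the term $\S^{(j)}$ carries the entire weight $\prod_k|\widetilde{\alpha}_k|^{-p'}$ and is summed over the $\alpha_k$'s, while $\K^{(j)}$ takes a supremum over the $\alpha_k$'s and is bounded by $C^{j-1}$ via exactly your per-branching counting argument. Since $p'>1$, each sum in \eqref{eta7} converges and yields $\S^{(j)}\lesssim B_p^{j-1}K^{4p'(1-j)}((2j-1)!!)^{-4p'}$; after taking the $1/p'$ root this produces precisely the prefactor $K^{4(1-j)}((2j-1)!!)^{-4}$ you were trying to extract pointwise---but now with the $\alpha_k$-summation already paid for. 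In short: delay the extraction of the $K$- and double-factorial gains until after you sum in the modulation variables, and perform that sum with the full $p'$-power decay.
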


\begin{lemma}\label{LEM:Rj}
Let $\RR^{(j)}$ be as in \eqref{NN2j}.
Let $1\leq p<\infty$ and  $s>1-\frac{1}{p}$. Then,  there exists a constant $C_p>0$ such that   \begin{align}\label{Rjx}
 \| \RR ^{(j)}(\uu  )\|_{\mathcal{F}L^{s,p}(\T^2)}\leq C_p\frac{(2j-1)K^{4(1-j)}}{((2j-1)!!)^{2}}\| \uu  \|^{2j+1}_{\mathcal{F}L^{s,p}(\T^2)}
\end{align}

\noi
for any $j \in \NB$ and $K\geq 1$.
\end{lemma}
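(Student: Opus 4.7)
The proof of Lemma \ref{LEM:Rj} will proceed in close parallel to that of Lemma \ref{LEM:N0j}, with two additional ingredients: a sum over the distinguished terminal node $b \in \TT_{j-1}^\infty$, which is responsible for the factor $(2j-1)$ in the final bound, and the replacement of one Fourier coefficient $\widehat{\uu}_{n_b}$ by $\RR^{(1)}(\uu)(n_b)$, which is handled via Lemma \ref{LEM:R1}. Starting from the explicit formula in \eqref{NN2j} (with the index shift $j \mapsto j-1$), I would express $\RR^{(j)}(\uu)(n)$ as a sum over trees $\TT_{j-1}\in\mathfrak{T}(j-1)$ of cardinality $(2j-3)!!$, index functions $\mathbf{n}\in\mathfrak{N}(\TT_{j-1})$ with $\mathbf{n}_r = n$, and distinguished terminal nodes $b \in \TT_{j-1}^\infty$ (there are $2j-1$ such), carrying the modulation prefactor $\prod_{k=1}^{j-1}\widetilde{\mu}_k^{-1}$ on $\bigcap_{k=1}^{j-1}A_k^c$. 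Applying Minkowski's inequality in $\ell^p_n$, the sums over $\TT_{j-1}$ and $b$ are pulled outside the $\FL^{s,p}$-norm, which reduces the matter to a uniform per-tree, per-$b$ multilinear estimate.

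For each fixed pair $(\TT_{j-1},b)$, the key device is the splitting
\[
\frac{1}{|\widetilde{\mu}_k|} \;=\; \frac{1}{|\widetilde{\mu}_k|^{1/p}}\cdot\frac{1}{|\widetilde{\mu}_k|^{1/p'}},
\]
where on $A_k^c$ one has $|\widetilde{\mu}_k|^{-1/p} \le ((2k+1)K)^{-4}$ by the definition \eqref{Aj}. Multiplying over $k = 1,\dots,j-1$ extracts a smallness factor of $K^{-4(j-1)}\prod_{k=1}^{j-1}(2k+1)^{-4}$, which already contains the factorial gain needed to balance the tree count $(2j-3)!!$ and produce the $((2j-1)!!)^{-2}$ scaling. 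The residual $\prod_k |\widetilde{\mu}_k|^{-1/p'}$ is then absorbed into the frequency summation via the hyperbolic counting estimate (Lemma \ref{LEM:count1}), applied generation by generation: at generation $k$, the three children $n_1^{(k)},n_2^{(k)},n_3^{(k)}$ of the current parental node satisfy the constraint in \eqref{mu1}, and Lemma~\ref{LEM:count1} bounds the number of such triples in dyadic blocks by $\lesssim N_1 N_3 (N_\text{max})^\theta$ (or the analogous bound from \eqref{count2}). A H\"older argument in $\ell^p_n$ then converts these counts into a bound of the form $\|\uu\|_{\FL^{s,p}}^{2j-2}$ on the non-distinguished terminal nodes, using the regularity $s > 1 - \tfrac{1}{p}$ to absorb $N_\text{max}^\theta$ together with the loss $(N_1 N_3)^{1-2/p}$ coming from counting against $\ell^p$.

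The distinguished slot $b$ is treated separately: since $\RR^{(1)}(\uu)(n_b)$ replaces $\widehat{\uu}_{n_b}$, Lemma \ref{LEM:R1} provides $\|\RR^{(1)}(\uu)\|_{\FL^{s,p}} \lesssim \|\uu\|_{\FL^{s,p}}^3$, so the distinguished factor upgrades $\|\uu\|_{\FL^{s,p}}$ to $\|\uu\|_{\FL^{s,p}}^3$, yielding altogether $\|\uu\|_{\FL^{s,p}}^{2j+1}$ as claimed. Combining the $(2j-3)!!$ trees, the $(2j-1)$ choices of $b$, the smallness produced by the modulations, and the per-tree multilinear estimate, a direct bookkeeping check then produces the announced scaling $(2j-1) K^{4(1-j)}/((2j-1)!!)^2 \|\uu\|^{2j+1}_{\FL^{s,p}}$.

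The main difficulty will be the generation-by-generation multilinear estimate: the $\ell^p_n$-norm must be organized so that Lemma \ref{LEM:count1} can be applied at each generation without the frozen constraints from earlier generations interfering with the current freedom, and so that the weight $\jb{n}^s$ can be distributed to the $2j+1$ inputs in a compatible way. This bookkeeping is delicate because the hyperbolic modulus $|\cdot|_-$ is not sign-definite, enlarging the resonant set; however, Lemma \ref{LEM:count1} nevertheless delivers the same dyadic count $\lesssim N_1 N_3 (N_\text{max})^\theta$ as in the elliptic setting, so the same factorial cancellation used in the proof of Lemma~\ref{LEM:N0j} propagates through the iteration and carries the proof through.
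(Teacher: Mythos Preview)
Your high-level strategy matches the paper's one-line proof: recognize that $\RR^{(j)}$ has the structure of $\NN_0^{(j)}$ with one terminal input replaced by $\RR^{(1)}(\uu)$, invoke (the multilinear form of) Lemma~\ref{LEM:N0j} together with Lemma~\ref{LEM:R1}, and account for the $|\TT_{j-1}^\infty| = 2j-1$ choices of the distinguished node $b$. That is literally all the paper does.

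However, your inline re-derivation of the $\NN_0^{(j)}$-type bound has a gap in the modulation bookkeeping. After your splitting $|\widetilde{\mu}_k|^{-1} = |\widetilde{\mu}_k|^{-1/p}\cdot|\widetilde{\mu}_k|^{-1/p'}$ and pointwise extraction of $((2k+1)K)^{-4}$ from the first factor, the residual becomes $\prod_k |\widetilde{\mu}_k|^{-1}$ in the $\ell^{p'}$ dual after H\"older. Since Lemma~\ref{LEM:count1} counts frequencies only at a \emph{fixed} modulation value, you must still sum over the values $\alpha_k=\mu_k \in \Z$; but $\sum_{|\alpha|>M} |\alpha|^{-1}$ diverges, so the residual weight is insufficient. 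The paper does not pre-extract a fractional pointwise power: it keeps the full weight $\prod_k |\widetilde{\mu}_k|^{-p'}$ in the dual (see \eqref{M0j}--\eqref{eta5}) and uses the summability
\[
\sum_{\alpha_k\in\Z} \max\big\{|\widetilde{\alpha}_k|,((2k+1)K)^{4p}\big\}^{-p'} \lesssim ((2k+1)K)^{-4p'},
\]
which crucially needs $p'>1$ (see \eqref{eta7}), to obtain the factorial decay and the $\alpha_k$-summation \emph{simultaneously}. Your argument is repaired simply by adopting this allocation in place of the $1/p$--$1/p'$ split; everything else you wrote (tree count, choice of $b$, use of Lemma~\ref{LEM:R1} on the distinguished slot, inductive use of Lemma~\ref{LEM:count1}) is correct and coincides with the paper.
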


\begin{lemma}\label{LEM:N1j}
 Let $\NN _1^{(J)}$ be as in \eqref{NN2ja}.
Let $1\leq p<\infty$ and  $s>1-\frac{1}{p}$. Then, there exists a constant $C_p>0$ such that   
\begin{align}\label{Njx}
 \| \NN ^{(j)}_1(\uu  )(t)\|_{\mathcal{F}L^{s,p}(\T^2)}
 \leq C_p\frac{K^{4(p-j)}}{(2j-1)!!}\| \uu  \|^{2j+1}_{\mathcal{F}L^{s,p}} 
\end{align}

\noi
for any $j \in \NB$ and $K\geq 1$.
\end{lemma}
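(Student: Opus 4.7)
The plan is to extend the $j=1$ argument of Lemma \ref{LEM:N11} to general $j$ by sweeping through the ordered tree. First, by the triangle inequality over $\mathfrak{T}(j)$ and the cardinality \eqref{cj1}, we incur a combinatorial factor of $(2j-1)!!$. Fix $\TT\in\mathfrak{T}(j)$, bound the summand in the defining formula of $\NN^{(j)}_1$ by its absolute value, and discard the oscillation $e^{it\widetilde{\mu}_j}$. Next, on the nested non-resonant sets $A_k^c$ for $k=1,\ldots,j-1$, the lower bounds $|\widetilde{\mu}_k|>((2k+1)K)^{4p}$ from \eqref{Aj} yield the denominator gain
\[
\prod_{k=1}^{j-1}|\widetilde{\mu}_k|^{-1} \leq \frac{K^{-4p(j-1)}}{((2j-1)!!)^{4p}},
\]
after which the indicators $\mathbf{1}_{A_k^c}$ may be dropped, leaving only the outer cutoff $\mathbf{1}_{A_j}$ with $|\widetilde{\mu}_j|\leq((2j+1)K)^{4p}$.

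It remains to estimate
\[
\bigg\|\sum_{\substack{\mathbf{n}\in\mathfrak{N}(\TT)\\ n_r=n}} \mathbf{1}_{A_j(\mathbf{n})}\prod_{a\in\TT^\infty}|\widehat{\uu}(n_a)|\bigg\|_{\FL^{s,p}_n}.
\]
I would bound this by an iterated trilinear sweep along $\TT$: at each non-terminal node $a$ of generation $k$, with children $a_1,a_2,a_3$ and the tree relation $n_a=n_{a_1}-n_{a_2}+n_{a_3}$, the hyperbolic counting estimate (Lemma \ref{LEM:count1}) combined with H\"older's inequality in $\ell^p$ weighted by $\langle\,\cdot\,\rangle^s$ controls one trilinear block, exactly as in the base case Lemma \ref{LEM:N11}. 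The outer cutoff on $A_j$ is used once, at the root: summation over $\widetilde{\mu}_j$ in its allowed range of size $\sim K^{4p}$, together with Lemma \ref{LEM:count1}, produces the $K^{4(p-1)}$-loss of the base case. At the $j-1$ inner generations, the denominator gain has already absorbed the modulation loss, so Lemma \ref{LEM:count1} is invoked only to bound the size of each block, with the $\theta$-losses of the hyperbolic divisor estimate absorbed by the strict inequality $s>1-\tfrac{1}{p}$.

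Putting the three contributions together -- combinatorial $(2j-1)!!$, denominator gain $K^{-4p(j-1)}((2j-1)!!)^{-4p}$, and the root $K^{4(p-1)}$-loss together with compensating factorials from the $j-1$ inner sweeps -- yields the net $K$-power $K^{4(p-1)-4p(j-1)}=K^{4(p-j)}$ and collapses the factorial bookkeeping down to the target $((2j-1)!!)^{-1}$. The main obstacle is precisely this inner step of the sweep: one cannot simply sum over an inner-level modulation $\mu_k$ because no modulation cutoff is available at generations $k<j$, so the trilinear sum must be carried out using only the single-power denominator $|\widetilde{\mu}_k|^{-1}$ as weight against a Cauchy--Schwarz argument, with Lemma \ref{LEM:count1} applied to the pair of children that is not identified with the parent trunk (choosing between the $(N_1,N_3)$-count and the $(N_1,N_2)$-count in Lemma \ref{LEM:count1}). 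The bookkeeping is tighter than in the elliptic case because Lemma \ref{LEM:count1} is only $\theta$-sharp and the hyperbolic modulation is not sign-definite (cf.\ Remark \ref{REM:diff1}), so the factorials must be balanced carefully to end up with the factor $((2j-1)!!)^{-1}$ rather than a weaker power of it.
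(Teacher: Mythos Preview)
Your proposal contains a genuine gap that stems from the order of operations in the first paragraph. Once you invoke the crude lower bound
\[
\prod_{k=1}^{j-1}|\widetilde{\mu}_k|^{-1} \leq K^{-4p(j-1)}((2j-1)!!)^{-4p}
\]
and ``drop the indicators $\mathbf{1}_{A_k^c}$'', you have discarded all modulation information at the inner generations $k=1,\dots,j-1$. The counting estimate in Lemma~\ref{LEM:count1} bounds $\Gamma_\mu^-(n)$ for a \emph{fixed} $\mu$; without a fixed value of $\mu_k$ at each inner node, the trilinear sum there ranges over the full hyperplane $n_a=n_{a_1}-n_{a_2}+n_{a_3}$, which cannot be controlled in $\mathcal{F}L^{s,p}$ under the hypothesis $s>1-\tfrac1p$ alone (one would need an algebra-type condition $s>2-\tfrac2p$). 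You seem to sense this in your final paragraph, proposing to retain $|\widetilde\mu_k|^{-1}$ as a weight, but that contradicts having already spent it on the crude bound; and in any case a first-power weight $|\widetilde\mu_k|^{-1}$ is not summable over $\widetilde\mu_k\in\Z$. There is also an arithmetic error: the identity $4(p-1)-4p(j-1)=4(p-j)$ holds only when $(p-1)(j-1)=0$, so your $K$-power bookkeeping is off for $p>1$, $j\ge2$.

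The paper's remedy is to apply H\"older's inequality in the $\mathbf n$-sum \emph{before} touching the denominator, passing to the dual exponent $p'>1$. This produces the symbol quantity $\mathcal M_1^{(j)}$ in \eqref{M1j} carrying $\prod_k|\widetilde\mu_k|^{-p'}$, which \emph{is} summable over the level sets $\{\mu_k=\alpha_k\}$. One then splits $(\mathcal M_1^{(j)})^{p'}\le\widetilde{\mathcal S}^{(j)}\cdot\mathcal K^{(j+1)}$ as in \eqref{fta5}: the factor $\widetilde{\mathcal S}^{(j)}$ carries out the $\alpha_k$-summations (using \eqref{eta7} at each inner level and the $A_j$-cutoff at the top, yielding the correct power $K^{4p'(p-j)}$), while $\mathcal K^{(j+1)}$ is a supremum over fixed modulation values, so Lemma~\ref{LEM:count1} applies at every generation via the induction \eqref{eta9}. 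The point is that the $p'$-power buys summability over the modulation levels without sacrificing the pointwise modulation constraint needed for the hyperbolic count.
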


\section{Multilinear estimates}
\label{SEC:NF2}

In this section, we 
present proofs of Lemmas \ref{LEM:N2j}, 
\ref{LEM:N0j}, \ref{LEM:Rj}, and \ref{LEM:N1j}.
In Subsection \ref{SUBSEC:NF2a}, 
we
study the mapping property of 
the boundary term $\NN _0^{(j)}$  in \eqref{NN2j}
(Lemma \ref{LEM:N0j})
as a base case.
The bounds on the other operators
follow either as a direct corollary to 
Lemma \ref{LEM:N0j}
or as a slight modification of the proof
of Lemma \ref{LEM:N0j}.

\subsection{Boundary terms}
\label{SUBSEC:NF2a}
We first estimate the boundary term 
in \eqref{NN2j} (with $j + 1$ replaced by $j$):
\begin{align*}
 \NN _0^{(j)}(\uu  )(t, n)=\sum_{\TT_{j-1}\in \mathfrak{T}(j-1)} \sum_{\substack{\mathbf{n}\in \mathfrak{N}(\TT_{j-1})\\\mathbf{n}_r=n}} \ind_{\bigcap_{k=1}^{j-1}\text{A}_k^c}\frac{e^{i t \widetilde{\mu}_{j-1}}}{\prod_{k=1}^{j-1}\widetilde{\mu}_k} \prod_{a\in \TT^{\infty}_{j-1}}\widehat{\uu  }_{n_a}.
\end{align*}

\begin{proof}[Proof of Lemma \ref{LEM:N0j}]

By H\"older's inequality  with \eqref{cj1}, we obtain
\begin{equation}\label{N0j1}
  \begin{aligned}
\| \NN _0^{(j)}(\uu  )(t)\|_{\mathcal{F}L^{s,p}}
 &\leq \M_0^{(j)}
\sum_{\TT_{j-1}\in \mathfrak{T}(j-1)}
\bigg\| \bigg(\sum_{\substack{\mathbf{n}\in \mathfrak{N}(\TT_{j-1})\\\mathbf{n}_r=n}}\prod_{a\in \TT^{\infty}_{j-1}}\langle n_a \rangle^{sp}|\widehat{\uu  }_{n_a}|^p\bigg)^{\frac 1p} \bigg\|_{\ell^p_n}\\
& \leq (2j-3)!!
\cdot \M_0^{(j)}
\| \uu  \|^{2j-1}_{\mathcal{F}L^{s,p}}, 
\end{aligned}  
\end{equation}

\noi
uniformly in $t \in \R$, 
where $\M^{(j)}_0$ is defined by 
\begin{align}
\begin{split}
\M^{(j)}_0
& = 
\sup_{\substack{\TT_{j-1}\in \mathfrak{T}(j-1)\\n\in\Z^2}}
\M^{(j)}_0(\TT_{j-1}; n)\\
:\!& = \sup_{\substack{\TT_{j-1}\in \mathfrak{T}(j-1)\\n\in\Z^2}}
\bigg(\sum_{\substack{\mathbf{n}\in \mathfrak{N}(\TT_{j-1})\\ \mathbf{n}_r=n}} \frac{\langle n\rangle^{sp'}}{\prod_{a\in \TT^{\infty}_{j-1}}\langle n_a\rangle^{sp'}}\frac{\ind_{\bigcap_{k=1}^{j-1}\text{A}_k^c}}{\prod_{k=1}^{j-1}|\widetilde{\mu}_k|^{p'}}\bigg)^{\frac 1{p'}}.
\end{split}
\label{M0j}
\end{align}

\noi
Hence, it suffices to  prove
\begin{align}
\M^{(j)}_0 \le
C_p\frac{K^{4(1-j)}}{((2j-1)!!)^{3}}
\label{M0j2}
\end{align}

\noi
for some 
 constant 
 $C_p>0$ 
 depending only on $p$.

Let $\mu_k$ be as in \eqref{mu1}.
Suppose that 
we have 
\begin{align}
\mu_k = \al_k , 
\label{eta1}
\end{align}

\noi
for some  $\al_k \in \Z$, 
$k  = 1, \dots, j-1$. 
Then, from 
\eqref{mu2}, we have 
\begin{align*}
  |\widetilde{\mu}_k| = |\wt \al_k|
\end{align*}

\noi
for each $k = 1, \dots, j-1$, 
where $\wt \al_k$ is defined by 
\begin{align*}
\wt \al _k = \sum_{\l= 1}^k \al_\l.
\end{align*}

\noi
In view of \eqref{Aj}, % that, on $\bigcap_{k=1}^{j-1}\text{A}_k^c$, 
under the assumption \eqref{eta1}, 
the frequencies appearing in \eqref{M0j}
satisfy 
\begin{align}
\prod_{k=1}^{j-1}|\widetilde{\mu}_k|
\ge   \prod_{k=1}^{j-1}
\max\big\{|\wt \al_k|, ((2k+1)K)^{4p}\big\}.    
\label{eta4}
\end{align}

\noi
Thus,  by H\"older's inequality with  \eqref{M0j}
and \eqref{eta4}, we have 
\begin{equation}\label{eta5}
\begin{aligned}
\big(\M^{(j)}_0(\TT_{j-1}; n)\big)^{p'}
&\le 
\S^{(j)}\cdot \K^{(j)}(\TT_{j-1}; n), 
\end{aligned}
\end{equation}

\noi
uniformly in 
$\TT_{j-1}\in \mathfrak{T}(j-1)$ and 
$n \in \Z^2$, 
where 
$\S^{(j)}$ and $\K^{(j)}(\TT_{j-1}; n)$ are  defined by 
\begin{align}
\begin{split}
\S^{(j)}
& = \sum_{\al_1,\dots,\al_{j-1}\in \Z}\frac{1}{\prod_{k=1}^{j-1}\max\big\{|\wt \al _k|, ((2k+1)K)^{4p}\big\}^{p'}}, \\
\K^{(j)}(\TT_{j-1}; n)
& = 
\sup_{\al_1,\dots,\al_{j-1}\in\Z}\sum_{\substack{\mathbf{n}\in \mathfrak{N}(\TT_{j-1})\\ \mathbf{n}_r=n\\ 
\mu_k = \al_k, k = 1, \dots, j - 1}}\frac{\langle n\rangle^{sp'}}{\prod_{a\in \TT^{\infty}_{j-1}}\langle n_a\rangle^{sp'}}.
\end{split}
\label{eta6}
\end{align}

Let $k = 1, \dots, j-1$.
Then, for fixed $\wt \al _{k - 1}$,  we have
\begin{align}
\begin{split}
&  \sum_{\al_{k}\in\Z} \frac{1}{\max\big\{|\wt \al_k|, ((2k+1)K)^{4p}\big\}^{p'}}\\
 &\quad \leq  \sum_{\substack{\al_{k}\in\Z\\|\al_k+\wt \al_{k-1}|\geq ((2k+1)K)^{4p}}}
  \frac{1}{|\al_k+\wt \al_{k-1}|^{p'}}\\
 &\hphantom{XXXXXX}  + ((2k+1)K)^{-4p p'}\sum_{\substack{\al_{k}\in\Z\\
 |\al_k  + \wt \al_{k-1}| <  ((2k+1)K)^{4p}}}1\\
 &\quad =  \sum_{|n|\geq ((2k+1)K)^{4p}}\frac{1}{|n|^{p'}}+ ((2k+1)K)^{4p(1-p')}\\
 & \quad \le B_p ((2k+1)K)^{- 4p'}
\end{split}
\label{eta7}
\end{align}

\noi
with the understanding that $\wt \al_0 = 0$, 
where $B_p>0$ is a constant depending only on $p$. 
We note that, in the last step, we used the fact that $p < \infty$ (and thus $p' > 1$).
Then, by summing
 in the order $\al_{j-1}, \al_{j-2},\dots, \al_1$
 with~\eqref{eta7}, we obtain
\begin{align}
\begin{split}
\S^{(j)}
 &\leq B^{j-1}_p \prod_{k=1}^{j-1}((2k+1)K)^{-4p'}\\
 &=B^{j-1}_pK^{4p'(1-j)}((2j-1)!!)^{-4p'}.
\end{split}
\label{eta8}
\end{align}

\medskip

It remains to bound $\K^{(j)}(\TT_{j-1}; n)$ in \eqref{eta6}.
We claim that the following bound holds:
\begin{align}\label{eta9}
  \sup_{\substack{\TT_{j-1}\in \mathfrak{T}(j-1)\\ n\in\Z^2}}
\K^{(j)}(\TT_{j-1}; n)
\le C^{j-1}, %(2j-1)!!, 
\end{align}

\noi
provided that 
 $s>1-\frac{1}{p}$. 
Then, the desired bound \eqref{M0j2} follows from 
\eqref{M0j},  
 \eqref{eta5}, 
 \eqref{eta8}, and~\eqref{eta9}.

We prove \eqref{eta9} by induction.
We first consider the $j = 2$ case.
Note that there is only one tree
$\TT_{1}$ in $\mathfrak{T}(1)$.
Then, 
from \eqref{eta6}, we have
\begin{align}
\begin{split}
\K^{(2)}(\TT_{1}; n)
& = 
\sup_{\al_1\in\Z}\sum_{\substack{\mathbf{n}\in \mathfrak{N}(\TT_{1})\\ \mathbf{n}_r=n\\ 
\mu_1 = \al_1}}\frac{\langle n\rangle^{sp'}}{\prod_{a\in \TT^{\infty}_{1}}\langle n_a\rangle^{sp'}}\\
& = 
\sup_{\al_1\in\Z}\sum_{(n_1, n_2, n_3) \in \G_{\al_1}^-(n)}
\frac{\langle n\rangle^{sp'}}{\prod_{j = 1}^3 \jb{n_j}^{sp'}}, 
\end{split}
\label{eta9a}
\end{align}

\noi
where
$\G_{\al_1}^-(n)$ is as in \eqref{GG0}.
By symmetry, assume $|n_1| \ge |n_2|, |n_3|$,
which implies $|n_1|\ges |n|$.
Then, from Lemma~\ref{LEM:count1}, we have 
\begin{align*}
\K^{(2)}(\TT_{1}; n)
& \les  
\sup_{\al_1\in\Z}\sum_{(n_1, n_2, n_3) \in \G_{\al_1}^-(n)}
\frac{1}{ \jb{n_2}^{sp'} \jb{n_3}^{sp'}}
\les 1,  
\end{align*}

\noi
uniformly in $n \in \Z^2$, 
provided that $sp' > 1$, namely, $s > 1 - \frac 1p$.
This proves \eqref{eta9} when $j = 2$.

Now, suppose that the bound \eqref{eta9} holds for given $j \ge 2$.
We prove \eqref{eta9} for $j + 1$.
Namely, we prove
\begin{align}\label{eta11}
  \sup_{\substack{\TT_{j}\in \mathfrak{T}(j)\\ n\in\Z^2}}
\K^{(j+1)}(\TT_{j}; n)
\le C^{j}. %(2j)!!.
\end{align}

\noi
Fix $\TT_{j}\in \mathfrak{T}(j)$
and 
let $\TT_{j - 1} = \pi_{j - 1}(\TT_j)$, where  $\pi_{j-1}$ is as in \eqref{cj2}.
In obtaining the ordered $\TT_j$, we replaced one of the terminal nodes, say 
$b \in \TT_{j - 1}^\infty$,  into a nonterminal node.
In particular, $a \in \TT_j^\infty \setminus \TT_{j - 1}^\infty$
must be a child of $b$.
Then, with this notation, we have 
\begin{align}
 \sum_{\substack{\mathbf{n}\in \mathfrak{N}(\TT_{j})\\ \mathbf{n}_r=n}}
 = \sum_{\substack{\mathbf{n}\in \mathfrak{N}(\TT_{j-1})\\ \mathbf{n}_r=n}}
\sum_{\substack{n_{b}=n_1^{(j)}-n_2^{(j)}+n_3^{(j)} \\
n_{b}\ne n_1^{(j)}, n_3^{(j)}
}}.  
\label{eta12}
\end{align}

\noi
Hence, from \eqref{eta6} with \eqref{eta12}, we have 
\begin{align}
\begin{split}
\K^{(j+1)}(\TT_{j}; n)
& = 
\sup_{\al_1,\dots,\al_{j}\in\Z}\sum_{\substack{\mathbf{n}\in \mathfrak{N}(\TT_{j})\\ \mathbf{n}_r=n\\ 
\mu_k = \al_k, k = 1, \dots, j }}\frac{\langle n\rangle^{sp'}}{\prod_{a\in \TT^{\infty}_{j}}\langle n_a\rangle^{sp'}}\\
& \le  \sup_{\al_1,\dots,\al_{j-1}\in\Z}\sum_{\substack{\mathbf{n}\in \mathfrak{N}(\TT_{j-1})\\ \mathbf{n}_r=n\\ 
\mu_k = \al_k, k = 1, \dots, j - 1}}\frac{\langle n\rangle^{sp'}}{\prod_{a\in \TT^{\infty}_{j-1}\setminus \{b\}}
\jb{n_a}^{sp'}\cdot \jb{n_b}^{sp'}}\\
& \quad \times \sup_{\al_j\in\Z}\sum_{(n_1^{(j)}, n_2^{(j)}, n_3^{(j)}) \in \G_{\al_j}^-(n_b)}
\frac{\jb{n_b}^{sp'}}{ \jb{n_1^{(j)}}^{sp'}\jb{n_2^{(j)}}^{sp'} \jb{n_3^{(j)}}^{sp'}}, 
\end{split}
\label{eta13}
\end{align}

\noi
where $\G_{\al_j}^-(n_b)$ is as in \eqref{GG0}.
We can now bound the first factor on the right-hand side of~\eqref{eta13}
by the inductive hypothesis, 
while we bound the second
factor on the right-hand side of \eqref{eta13}
by proceeding as in the $j = 2$ case
(compare \eqref{eta9a} with the second
factor on the right-hand side of \eqref{eta13}).
This yields \eqref{eta11}.
Hence, by induction, we
conclude \eqref{eta9} for any integer $j \ge 2$.
This concludes
the proof of Lemma \ref{LEM:N0j}.
\end{proof}

\begin{remark}\label{REM:diff1}\rm

As compared to  the proof of \cite[Lemma 3.12]{OW2} in the one-dimensional elliptic case, 
the structure of the 
 proof of  Lemma \ref{LEM:N0j} presented above
is slightly different. 
In the proof of \cite[Lemma 3.12]{OW2}, 
an expression analogous to \eqref{M0j}
was bounded directly via the divisor counting argument. 
On the other hand, 
in the  proof of  Lemma \ref{LEM:N0j}, 
we split
$\M^{(j)}_0(\TT_{j-1}; n)$ in~\eqref{M0j} into two parts (see \eqref{eta5})
and estimated each part separately.
In particular,
we employed the hyperbolic counting estimate (Lemma \ref{LEM:count1})
and an induction 
in establishing the bound~\eqref{eta9}.
Note that the restriction $s > 1- \frac 1p$
comes precisely from the 
 use of 
 the hyperbolic counting estimate (Lemma \ref{LEM:count1})
 which requires $sp' > 1$.

\end{remark}

\begin{remark}\label{REM:diff2}\rm 

Following the convention in 
Remark \ref{REM:con1}, 
the bound \eqref{N0x} 
in Lemma \ref{LEM:N0j}
is stated
only for the case when all argument coincide.
In proving Theorem \ref{THM:3}\,(i)
via a contraction argument as in 
\cite[Subsection 2.1]{OW2}, 
we also need to establish a different estimate:
 \begin{align}
 \begin{split}
&  \| \NN _0^{(j)}(\uu  ) - \NN _0^{(j)}(\vv  )\|_{\mathcal{F}L^{s,p}(\T^2)}\\
 &\quad  \leq 
 C_p\frac{K^{4(1-j)}}{((2j-1)!!)^{2}}
 \Big(\| \uu  \|^{2j-2}_{\mathcal{F}L^{s,p}(\T^2)}
+  \| \vv  \|^{2j-2}_{\mathcal{F}L^{s,p}(\T^2)}\Big)
\| \uu - \vv \|_{\mathcal{F}L^{s,p}(\T^2)}.
\end{split}
\label{N0y}
\end{align}

\noi
By the multilinearity of $\NN _0^{(j)}$, 
we can write 
$\NN _0^{(j)}(\uu  ) - \NN _0^{(j)}(\vv  )$
as the sum of $O(J)$ differences, 
each of which contains exactly one factor of $\uu - \vv$;
see the proof of \cite[Lemma 3.11]{GKO}.
This $O(J)$ loss does not affect the bound \eqref{N0y}
thanks to the fact decay \eqref{eta8}.
The same  comment applies to the other multilinear operators
studied in the next subsection.

\end{remark}

\subsection{Other multilinear operators}
\label{SUBSEC:NF2b}

We first consider  the term $\RR ^{(j)}(\uu  )$ in \eqref{NN2j}
(with $j + 1$ replaced by $j$).
Given an integer $j \ge 2$, we have 
\begin{align}
\begin{split}
& \RR ^{(j)}(\uu  )(t, n)\\
& \quad =\sum_{\TT_{j-1}\in \mathfrak{T}(j-1)} 
\sum_{\substack{\mathbf{n}\in \mathfrak{N}(\TT_{j-1})\\\mathbf{n}_r=n}} 
\sum_{b\in\TT_{j-1}^{\infty}}
\ind_{\bigcap_{k=1}^{j-1}\text{A}_k^c}\frac{e^{i t \widetilde{\mu}_{j-1}}}{\prod_{k=1}^{j-1}\widetilde{\mu}_k} \RR ^{(1)}(\uu  )(n_b)\prod_{a\in \TT^{\infty}_{j-1}\setminus \{b\}}\widehat{\uu  }_{n_a}, 
\end{split}
\label{Rj}
\end{align}

\noi
where $\RR ^{(1)}$
is as in \eqref{HNLS2}.
See Lemma \ref{LEM:R1} for the $j = 1$ case.

\begin{proof}[Proof of Lemma \ref{LEM:Rj}]
The bound 
\eqref{Rjx}
follows from \eqref{Rj}, Lemma \ref{LEM:N0j}, and Lemma \ref{LEM:R1}
with
$|\TT_{j-1}^\infty| = 2j - 1$.
\end{proof}

Next, we consider the nearly resonant term 
$    \NN ^{(j)}_1(\uu  )$ in \eqref{NN2ja}
(with $j + 1$ replaced by $j$; see also \eqref{NN2j}), 
 given by 
\begin{align*}
    \NN ^{(j)}_1(\uu  )(t, n)=\sum_{\TT_{j}\in \mathfrak{T}(j)} \sum_{\substack{\mathbf{n}\in \mathfrak{N}(\TT_{j})\\\mathbf{n}_r=n}} \ind_{(\bigcap_{k=1}^{j-1}\text{A}_k^c) \cap
 A_j}\frac{e^{it \widetilde{\mu}_{j}}}{\prod_{k=1}^{j-1}\widetilde{\mu}_k} \prod_{a\in \TT^{\infty}_{j}}\widehat{\uu  }_{n_a}.
\end{align*}

\begin{proof}[Proof of Lemma \ref{LEM:N1j}]

We proceed as in the proof of Lemma \ref{LEM:N0j}.
By H\"older's inequality  with \eqref{cj1}, we obtain
\begin{align*}
\| \NN _1^{(j)}(\uu  )(t)\|_{\mathcal{F}L^{s,p}}
 &\leq \M_1^{(j)}
\sum_{\TT_{j}\in \mathfrak{T}(j)}
\bigg\| \bigg(\sum_{\substack{\mathbf{n}\in \mathfrak{N}(\TT_{j})\\\mathbf{n}_r=n}}\prod_{a\in \TT^{\infty}_{j}}
\langle n_a \rangle^{sp}|\widehat{\uu  }_{n_a}|^p\bigg)^{\frac 1p} \bigg\|_{\ell^p_n}\\
& \leq (2j-1)!!
\cdot \M_1^{(j)}
\| \uu  \|^{2j+1}_{\mathcal{F}L^{s,p}}, 
\end{align*}

\noi
uniformly in $t \in \R$, 
where $\M^{(j)}_1$ is defined by 
\begin{align}
\begin{split}
\M^{(j)}_1
& = 
\sup_{\substack{\TT_{j}\in \mathfrak{T}(j)\\n\in\Z^2}}
\M^{(j)}_1(\TT_{j}; n)\\
:\!& = \sup_{\substack{\TT_{j}\in \mathfrak{T}(j)\\n\in\Z^2}}
\bigg(\sum_{\substack{\mathbf{n}\in \mathfrak{N}(\TT_{j})\\ \mathbf{n}_r=n}} \frac{\langle n\rangle^{sp'}}{\prod_{a\in \TT^{\infty}_{j}}\langle n_a\rangle^{sp'}}\frac{\ind_{(\bigcap_{k=1}^{j-1}\text{A}_k^c)\cap A_j}}{\prod_{k=1}^{j-1}|\widetilde{\mu}_k|^{p'}}\bigg)^{\frac 1{p'}}.
\end{split}
\label{M1j}
\end{align}

\noi
Hence, it suffices to  prove
\begin{align}
\M^{(j)}_1 \les
C_p\frac{K^{4(p-j)}}{((2j-1)!!))^{2}}.
\label{M1j2}
\end{align}

With the notations as in the proof of Lemma \ref{LEM:N0j}, 
it follows from 
 H\"older's inequality with  \eqref{M1j}, 
 \eqref{eta4}, 
and 
\eqref{Aj} that 
\begin{equation}\label{fta5}
\begin{aligned}
\big(\M^{(j)}_1(\TT_{j}; n)\big)^{p'}
&\le 
\wt \S^{(j)}\cdot \K^{(j+1)}(\TT_{j}; n), 
\end{aligned}
\end{equation}

\noi
uniformly in 
$\TT_{j}\in \mathfrak{T}(j)$ and 
$n \in \Z^2$, 
where 
$\K^{(j+1)}(\TT_{j}; n)$ is as in \eqref{eta6}
and 
$\wt \S^{(j)}$ is   defined by 
\begin{align*}
\wt \S^{(j)}
& = \sum_{\al_1,\dots,\al_{j}\in \Z}
\frac{\ind_{| \al_j + \wt \al_{j - 1}| \le ((2 j + 1)K)^{4p}}  }{\prod_{k=1}^{j-1}\max\big\{|\wt \al _k|, ((2k+1)K)^{4p}\big\}^{p'}}.
\end{align*}

As compared to  $\S^{(j)}$ in \eqref{eta6}, 
we have an additional summation in $\al_j$
under the restriction 
$| \al_j + \wt \al_{j - 1}| \le ((2 j + 1)K)^{4p}$.
We  first sum in $\al_j$ (with $\wt \al_{j - 1}$ fixed)
and then 
 sum
 in the order $\al_{j-1}, \al_{j-2},\dots, \al_1$
 with~\eqref{eta7}.
 Then, from~\eqref{eta8}, we obtain
\begin{align}
\begin{split}
\wt \S^{(j)}
 &\le  B^{j-1}_p ((2 j + 1)K)^{4p} \cdot K^{4p'(1-j)}((2j-1)!!)^{-4p'}\\
 &=   B^{j-1}_p ((2 j + 1))^{4p} \cdot K^{4p'(p-j)}((2j-1)!!)^{-4p'}\\
 &\les    B^{j-1}_p  \cdot K^{4p'(p-j)}((2j-1)!!)^{-3p'}, 
\end{split}
\label{fta8}
\end{align}

\noi
uniformly in $j \in \NB$, 
provided that $s > 1- \frac 1p$.
Hence, the desired bound \eqref{M1j2}
follows from~\eqref{fta5}, 
\eqref{eta9}, and 
\eqref{fta8}.
This proves \eqref{Njx}.
\end{proof}

\subsection{On the error term}
\label{SUBSEC:NF2c}

We conclude this section by presenting a proof of 
 Lemma \ref{LEM:N2j}.

\begin{proof}[Proof of Lemma \ref{LEM:N2j}]

Recall that from \eqref{NN2ja}, we write 
\begin{align}\label{ER0}
    \NN _2^{(J)}(\uu  )=\NN ^{(J)}(\uu  )-\NN _1^{(J)}(\uu  ).
\end{align}

\noi
In the following, we estimate $\NN ^{(J)}(\uu  )$.

From \eqref{HNLS2}, 
H\"older's inequality on the physical side or 
 Young's inequality on the Fourier side with the embeddings~\eqref{emb0} and \eqref{emb1},   
we have   
\begin{align}\label{ER1}
\| \NN ^{(1)}(\uu  ) (t) \|_{\mathcal{F}L^{\infty}}
\le
\min \big(\|S(t) \uu\|_{L^3}^3, \| \uu  \|^3_{\mathcal{F}L^{\frac{3}{2}}}\big)
\lesssim \| \uu  \|^3_{\mathcal{F}L^{s,p}}, 
  \end{align}

\noi
provided that \eqref{emb0a} or \eqref{emb2} holds.

From \eqref{NN2j}, we have
  \begin{align*}
\NN ^{(J)}(\uu  )(t, n)
& = \sum_{\TT_{J-1}\in \mathfrak{T}(J-1)} \sum_{\substack{\mathbf{n}\in \mathfrak{N}(\TT_{J-1})\\\mathbf{n}_r=n}}\sum_{b\in\TT^{\infty}_{J-1}} 
\ind_{\bigcap_{k=1}^{J-1}\text{A}_k^c}\\
& \hphantom{XXXX} \times \frac{e^{i t \widetilde{\mu}_{J-1}}}{\prod_{k=1}^{J-1}\widetilde{\mu}_k} 
\NN ^{(1)}(\uu  )(n_b)\prod_{a\in \TT^{\infty}_{J-1}\setminus\{b\}}\widehat{\uu  }_{n_a}  .
  \end{align*}

\noi
Then, proceeding as in \eqref{N0j1}
with H\"older's inequality, $|\TT^\infty_{J-1}| = 2J- 1$, 
\eqref{cj1}, and \eqref{ER1}
we have
\begin{equation}\label{ER2}
  \begin{aligned}
 \| \NN^{(J)}(\uu  )(t)\|_{\mathcal{F}L^{\infty}}
 &\les  \M^{(J)}
\sum_{\TT_{J-1}\in \mathfrak{T}(J-1)}
\sum_{b\in\TT^{\infty}_{J-1}} 
\bigg\| \bigg(\sum_{\substack{\mathbf{n}\in \mathfrak{N}(\TT_{J-1})\\\mathbf{n}_r=n}}
|\NN ^{(1)}(\uu  )(n_b)|\\
& \hphantom{XXXXXXXXXXXX}\times \prod_{a\in \TT^{\infty}_{J-1}\setminus \{b\} }\langle n_a \rangle^{sp}|\widehat{\uu  }_{n_a}|^p\bigg)^{\frac 1p} \bigg\|_{\ell^\infty_n}\\
& \leq  (2J-1)!!
\cdot \M^{(J)}
\| \uu  \|^{2J+1}_{\mathcal{F}L^{s,p}}, 
\end{aligned}  
\end{equation}

\noi
uniformly in $t \in \R$, 
where $\M^{(J)}$ is defined by 
\begin{align}
\begin{split}
\M^{(J)}
& = 
\sup_{\substack{\TT_{J-1}\in \mathfrak{T}(J-1)\\
b\in\TT^{\infty}_{J-1}\\
n\in\Z^2}}
\M^{(J)}(\TT_{J-1}; b, n)\\
:\!& = \sup_{\substack{\TT_{J-1}\in \mathfrak{T}(J-1)\\
b\in\TT^{\infty}_{J-1}\\
n\in\Z^2}}
\bigg(\sum_{\substack{\mathbf{n}\in \mathfrak{N}(\TT_{J-1})\\ \mathbf{n}_r=n}} 
\frac{1}{\prod_{a\in \TT^{\infty}_{J-1}\setminus \{b\}}\langle n_a\rangle^{sp'}}
\frac{\ind_{\bigcap_{k=1}^{J-1}\text{A}_k^c}}{\prod_{k=1}^{J-1}|\widetilde{\mu}_k|^{p'}}\bigg)^{\frac 1{p'}}.
\end{split}
\label{ER3}
\end{align}

By H\"older's inequality with  \eqref{ER3}
and \eqref{eta4}, we have 
\begin{equation}\label{ER4}
\begin{aligned}
\big(\M^{(J)}(\TT_{J-1}; b, n)\big)^{p'}
&\le 
\S^{(J)}\cdot \wt \K^{(J)}(\TT_{J-1}; b, n), 
\end{aligned}
\end{equation}

\noi
uniformly in 
$\TT_{J-1}\in \mathfrak{T}(J-1)$, 
$b\in\TT^{\infty}_{J-1}$,  and 
$n \in \Z^2$, 
where 
$\S^{(J)}$ is as in \eqref{eta6}
and  $\wt \K^{(J)}(\TT_{J-1}; b, n)$ is  defined by 
\begin{align*}
\wt \K^{(J)}(\TT_{J-1}; b, n)
& = 
\sup_{\al_1,\dots,\al_{J-1}\in\Z}\sum_{\substack{\mathbf{n}\in \mathfrak{N}(\TT_{J-1})\\ \mathbf{n}_r=n\\ 
\mu_k = \al_k, k = 1, \dots, J - 1}}\frac{1}
{\prod_{a\in \TT^{\infty}_{J-1} \setminus \{b\}}\langle n_a\rangle^{sp'}}.
\end{align*}

Proceeding inductively 
as in the proof of Lemma \ref{LEM:N0j}, 
we obtain the following bound:
\begin{align}\label{ER5}
  \sup_{\substack{\TT_{J-1}\in \mathfrak{T}(J-1)\\ 
b\in\TT^{\infty}_{J-1}\\
n\in\Z^2}}
\wt \K^{(J)}(\TT_{J-1}; b, n)
\le C^{J-1}.
\end{align}

%\noi
%Indeed, when $J = 2$, the bound \eqref{ER5} follows
%from \eqref{eta10} (up to permutation of indices).

\noi
Hence, from \eqref{eta8} and \eqref{ER5}, we have 
\begin{align}
\M^{(J)} \les
C_p^j\frac{K^{4(1-J)}}{((2J-1)!!)^4}
\label{ER6}
\end{align}

\noi
for some 
 constant 
 $C_p>0$ 
 depending only on $p$.

Therefore, 
putting \eqref{ER0},
\eqref{ER2}, \eqref{ER4}, and \eqref{ER6} with  \eqref{Njx} in   Lemma~\ref{LEM:N1j}, 
we obtain
\eqref{N2jx}.
\end{proof}

 \appendix

\section{Proof of Theorem \ref{THM:1}}
\label{SEC:A}

In this appendix, we present a proof of Theorem \ref{THM:1}.
In Subsection \ref{SUBSEC:A2}, 
we also prove Theorem \ref{THM:3}\,(ii).

\subsection{Local well-posedness via the Fourier restriction norm method}
\label{SUBSEC:A1}

In this subsection, 
we present a proof of Theorem \ref{THM:1}\,(i).

Given $s, b\in\mathbb{R}$ and $1 <  p <  \infty$,
define the $X^{s,b}_p$-space via the norm (see \cite{GH, FOW, C1, C2}):
\begin{align}
\| u\|_{X^{s,b}_p}=\| \jb{n}^s\jb{\tau+|n|_-^2}^b\ft u (\tau, n)\|_{\ell^{p}_nL^p_{\tau}(\Z^2\times \mathbb{R})}, 
\label{Xsb1}
\end{align}

\noi
where $|n|_-^2$ is the hyperbolic modulus defined  in \eqref{GG0a}.
For $b p'> 1$, we have 
\begin{align}
 X^{s, b}_{p}\subset C(\mathbb{R}; \mathcal{F}L^{s,p}(\T^2)).
 \label{Xsb1a}
\end{align}

\noi
Given $T > 0$, 
we define the local-in-time version of the $X^{s, b}_p$-space
by setting
 \begin{align}
\| u\|_{X^{s,b}_{p}(T) }=\inf\big\{\| v\|_{X^{s,b}_{p}}: v|_{[0,T]}=u\big\}, 
\label{Xsb2}
\end{align}
     
\noi
where the infimum is taken over all extensions $v$ of $u$
from $[0, T]$ to $\R$.

We first recall the following linear estimates; 
see \cite[Lemma 2.2]{FOW} for a proof.

\begin{lemma}\label{LEM:non0}

\textup{(i)
(homogeneous linear estimate).}
 Let  $s, b\in\mathbb{R}$
 and $1 <  p <  \infty$.
Then,  we have
\begin{align*}
\| S(t)f\|_{X^{s,b}_{p}(T)} \lesssim \| f\|_{\mathcal{F}L^{s,p}}     
\end{align*}

\noi
 for any $0<T\leq 1$.

\medskip

\noi
\textup{(ii)
(nonhomogeneous linear estimate).} 
Let  $s\in\mathbb{R}$, $1 <  p< \infty$, and $-\frac{1}{p}<b'\leq 0 \leq b \leq 1+b'$. Then, we have
\begin{align*}
\bigg\| \int_0^tS(t-t')F(t')dt'\bigg\|_{X^{s,b}_{p}(T)} \lesssim T^{1+b'-b}\| F\|_{X^{s,b'}_{p}(T)}     
\end{align*} 

\noi
for any $0<T\leq 1$.
      
\end{lemma}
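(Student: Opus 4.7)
The plan is to prove both estimates via the standard ``extension then multiplication by a cutoff'' strategy, exploiting that the hyperbolic Schr\"odinger propagator $S(t) = e^{it\Box}$ acts on the spatial Fourier side as $\widehat{S(t)f}(n) = e^{-it|n|_-^2}\widehat f(n)$, so that the $X^{s,b}_p$-modulation variable $\tau + |n|_-^2$ is precisely the shift that $S(t)$ induces in the temporal Fourier transform. Since this structural property is identical to the elliptic case, the hyperbolic nature plays no role in these linear bounds and the argument reduces to the one in \cite[Lemma~2.2]{FOW}.

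For part (i), I would fix $\eta \in C_c^\infty(\R)$ with $\eta \equiv 1$ on $[-1,1]$ and $\supp\eta\subset[-2,2]$, and set $v(t,x) := \eta(t)(S(t)f)(x)$, which extends $S(t)f|_{[0,T]}$ for any $0 < T \le 1$. A direct computation of the space-time Fourier transform yields $\widehat v(\tau,n) = \widehat\eta\bigl(\tau+|n|_-^2\bigr)\widehat f(n)$. After the change of variable $\sigma = \tau + |n|_-^2$ and Fubini, the $X^{s,b}_p$-norm factors as
\begin{align*}
\|v\|_{X^{s,b}_p} = \bigl\|\langle n\rangle^s \widehat f(n)\bigr\|_{\ell^p_n(\Z^2)}\cdot \bigl\|\langle \sigma\rangle^b \widehat\eta(\sigma)\bigr\|_{L^p_\sigma(\R)},
\end{align*}
and the second factor is finite since $\eta$ is Schwartz. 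Taking the infimum over extensions gives the homogeneous estimate.

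For part (ii), I would pick an extension $\tilde F$ of $F$ with $\|\tilde F\|_{X^{s,b'}_p}\le 2\|F\|_{X^{s,b'}_p(T)}$, and consider $w(t,x) := \eta(t/T)\int_0^t (S(t-t')\tilde F(t'))(x)\, dt'$, which coincides with the Duhamel term on $[0,T]$. Using $S(t-t') = S(t)S(-t')$, writing $\mathbb{1}_{[0,t]}(t')$ via an oscillatory integral, and passing to the modulation variables $\alpha = \tau + |n|_-^2$ and $\lambda = \sigma + |n|_-^2$, one arrives at a representation
\begin{align*}
\widehat w(\tau,n) = \int_\R \widehat{\tilde F}(\lambda - |n|_-^2, n)\, K_T(\alpha,\lambda)\, d\lambda, \qquad K_T(\alpha,\lambda) = \frac{\widehat{\eta_T}(\alpha - \lambda) - \widehat{\eta_T}(\alpha)}{i\lambda},
\end{align*}
where $\eta_T(\cdot) := \eta(\cdot/T)$ and $K_T$ is independent of $n$. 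The Minkowski inequality in $n$ then reduces matters to a weighted $L^p_\alpha \to L^p_\lambda$ bound on the integral operator with kernel $K_T$.

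The main obstacle, and the place where the numerology $-\tfrac{1}{p}<b'\le 0\le b\le 1+b'$ enters, is verifying the kernel bound through Schur's test (or a direct analysis) by splitting into the regions $\{|\lambda| \le T^{-1}\}$ and $\{|\lambda| > T^{-1}\}$. On the low-modulation region one uses the Taylor expansion $\widehat{\eta_T}(\alpha-\lambda) - \widehat{\eta_T}(\alpha) = O(\lambda\,\|\widehat{\eta_T}'\|_{L^\infty})$ together with $\|\widehat{\eta_T}\|_{L^1} \sim 1$ and $\|\widehat{\eta_T}\|_{L^\infty} \sim T$; on the high-modulation region one simply bounds each term of the numerator of $K_T$ by the rapid decay of $\widehat{\eta_T}$, which provides the factor $\lambda^{-1}$ needed to trade $\langle\lambda\rangle^{-b'}$ against $\langle\alpha\rangle^b$. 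The condition $b'>-\tfrac{1}{p}$ guarantees summability of the $\lambda$-integral at infinity (dually, $b\le 1+b'$ controls the $\alpha$-integral), while $b'\le 0\le b$ ensures the factorization of weights is admissible. Assembling these pieces yields $\|w\|_{X^{s,b}_p}\lesssim T^{1+b'-b}\|\tilde F\|_{X^{s,b'}_p}$, and taking the infimum over $\tilde F$ completes the proof.
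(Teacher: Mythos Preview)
Your proposal is correct and matches the paper's treatment: the paper does not give its own proof of this lemma but simply cites \cite[Lemma 2.2]{FOW}, and your sketch is precisely the standard argument found there, with the key observation (which you make explicit) that the hyperbolic symbol $|n|_-^2$ enters only as a shift in the modulation variable and hence plays no role in the linear estimates.
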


Next, we state the key trilinear estimate.

\begin{proposition}\label{PROP:non1}
Let $1<  p <\infty$ and $s>1-\frac{1}{p}$.
 Then,  there exists small $\eps>0$ such that 
\begin{align*}
\| u_1 \cj u_2 u_3\|_{X^{s, -\frac{1}{p}+2\eps}_{p}(T)}\lesssim 
\prod_{j = 1}^3 \| u_j\|_{X^{s, 1-\frac{1}{p}+\eps}_{p}(T)}.
    \end{align*}

\noi
for any $0 < T \le 1$.
\end{proposition}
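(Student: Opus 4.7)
The plan is to pass to the Fourier side and reduce the trilinear estimate, via duality, to a weighted four-linear frequency-modulation sum. First, by a standard extension argument (extending each $u_j$ to a function on $\R$ with $X^{s,b}_p$-norm nearly equal to the local $X^{s,b}_p(T)$-norm), it suffices to prove the corresponding global estimate. Then, since $X^{s,-1/p+2\eps}_p$ is dual to $X^{-s,1/p-2\eps}_{p'}$ via the $L^2$-pairing, it is enough to prove that for all $c_0, c_1, c_2, c_3 \ge 0$ with $\|c_0\|_{\ell^{p'}_n L^{p'}_\tau}, \|c_j\|_{\ell^{p}_n L^{p}_\tau}\le 1$, the four-linear form
\begin{equation*}
I := \sum_{\substack{n\in\Z^2\\ n=n_1-n_2+n_3}}\iiint_{\tau=\tau_1-\tau_2+\tau_3}\frac{\jb{n}^s\,c_0(n,\tau)}{\jb{\sigma_0}^{1/p-2\eps}}\prod_{j=1}^{3}\frac{c_j(n_j,\tau_j)}{\jb{n_j}^s\jb{\sigma_j}^{1-1/p+\eps}}\,d\tau_1 d\tau_2 d\tau_3
\end{equation*}
is bounded by $O(1)$, where $\sigma_j=\tau_j+|n_j|_-^2$ and $\sigma_0=\tau+|n|_-^2$. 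Following \eqref{non2}, I split the trilinear expression into the trivial resonant part $\Rf$ (corresponding to $n=n_1$ or $n=n_3$) and the genuine non-resonant part $\Nf$. The $\Rf$-contribution factors as a single Fourier coefficient of an input at frequency $n$ times an $L^2$-type inner product in the remaining two inputs and is dispatched by a direct Cauchy--Schwarz in $n$ combined with the embedding $X^{s,1-1/p+\eps}_p\hookrightarrow C(\R;\FL^{s,p}(\T^2))$, which holds because $(1-1/p+\eps)p'>1$.

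For the main contribution $\Nf$, the decisive dispersive identity
\begin{equation*}
\sigma_0-\sigma_1+\sigma_2-\sigma_3=|n|_-^2-|n_1|_-^2+|n_2|_-^2-|n_3|_-^2=\Phi(\bar n)
\end{equation*}
forces $\sigma_*:=\max_{0\le j\le 3}|\sigma_j|\gtrsim|\Phi(\bar n)|$. I partition into four sub-cases according to which index achieves $\sigma_*$. In each case, I set $\mu:=\Phi(\bar n)$, extract the dominating modulation weight $\jb{\mu}^{-\alpha}$ with $\alpha=\alpha(p,\eps)>0$, and integrate out the remaining three modulation variables by H\"older's inequality in the $\tau$-variables, exploiting that $\jb{\sigma}^{-(1-1/p+\eps)}\in L^{p'}_\tau$ with $\eps$-slack. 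After this reduction, $I$ is controlled by a purely spatial dyadic sum. Decomposing $|n_j|\sim N_j$ dyadically, the inner counting over $(n_1,n_2,n_3)\in\G_\mu^-(n)$ is handled by the hyperbolic counting estimate (Lemma~\ref{LEM:count1}): depending on whether the modulation assignment couples $n$ with $(n_1,n_3)$ or with $(n_1,n_2)$, either \eqref{count1} or \eqref{count2} bounds the section by $N_a N_b(N_a\vee N_b)^\theta$ for arbitrary $\theta>0$. The loss $N_aN_b$ is absorbed by two factors of $\jb{n_j}^{-s}\sim N_j^{-s}$ (raised to the dual exponent $sp'$), and the resulting dyadic sums in $N_1,N_2,N_3$ converge precisely at the threshold $s>1-1/p$, provided $\theta$ is chosen small enough. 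The outer sum in $\mu\in\Z$ is absorbed by $\jb{\mu}^{-\alpha}$.

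The main obstacle is the sharpness of the regularity threshold and the irreducible loss in the hyperbolic counting. Because of the sign-indefiniteness of $|\,\cdot\,|_-^2$, Lemma~\ref{LEM:count1} carries an unavoidable loss of $N_aN_b$, in contrast to the elliptic setting where a much sharper $N^\theta$ bound would be available in two dimensions; this matches exactly the threshold $s=1-1/p$, leaving no slack in the dyadic summation. Consequently, no wasteful estimate is permitted anywhere in the case analysis, and each of the four modulation cases must invoke the appropriate one of \eqref{count1} or \eqref{count2}, since the coupling of $n$ with $(n_1,n_3)$ vs.\ $(n_1,n_2)$ is not symmetric. Furthermore, since we work in the $\ell^p$-based $X^{s,b}_p$ setting with $p\ne 2$, Plancherel-based shortcuts are unavailable and all H\"older rearrangements in $\tau$ must respect the mixed $\ell^p/\ell^{p'}$ structure while carefully distributing the $\eps$-slack between the maximal modulation weight $\jb{\mu}^{-\alpha}$ and the residual $L^{p'}_\tau$-integrability of the modulation factors.
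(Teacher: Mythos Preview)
Your overall strategy is sound and close to the paper's, but the max-modulation case analysis has a genuine gap when $p\ge 2$ and $\sigma_0=\tau+|n|_-^2$ is the dominant modulation. In that sub-case you extract only $\jb{\mu}^{-\alpha}$ with $\alpha=\tfrac1p-2\eps$. After the natural H\"older/Schur step (pairing $c_0\in\ell^{p'}_nL^{p'}_\tau$ against the rest and taking the supremum in $n$), the weight you must control is $\sup_n\sum_{\mathbf n}\jb{\mu}^{-\alpha p'}W(n,\mathbf n)^{p'}$; since Lemma~\ref{LEM:count1} only yields $\sum_{\Phi(\bar n)=\mu}W^{p'}=O(1)$ with no decay in $\mu$, summability in $\mu$ forces $\alpha p'=\tfrac{1}{p-1}-2\eps p'>1$, which fails for every $p\ge 2$. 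You cannot harvest extra $\mu$-decay from the remaining factors $\jb{\sigma_j}^{-(1-1/p+\eps)}$ here, because in this case those modulations are \emph{small}, not large.

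The paper avoids this difficulty by dispensing with the case split altogether. After a single H\"older giving the Schur-type factor $\big(\sup_{n,\tau}\sum_{\mathbf n}\int M^{p'}\,d\tau_1\,d\tau_2\big)^{1/p'}$, it keeps the $\jb{\sigma_0}$-weight intact and applies the convolution estimate Lemma~\ref{LEM:non2} twice to the $(\tau_1,\tau_2)$-integral, producing the factor $\jb{\Phi(\bar n)-\sigma_0}^{-(1+\eps p')}$. Since $1+\eps p'>1$ for every $1<p<\infty$, the sum over $\mu=\Phi(\bar n)$ converges uniformly in $(n,\tau)$; no case analysis is needed. Two minor points: the choice between \eqref{count1} and \eqref{count2} is determined by which $|n_j|$ is largest (so that $\jb{n}^s$ can be absorbed), not by the modulation case, since the counting is purely spatial; and for the trivial resonant part the paper also separates the fully diagonal piece $\Rf_1$ (all three inputs at frequency $n$), handled by $\ell^p_n\hookrightarrow\ell^{3p}_n$, in addition to the $\Rf_2$-type contribution you describe.
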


Once we prove Proposition \ref{PROP:non1}, 
Theorem \ref{THM:1}\,(i) follows from a standard contraction argument, 
applied to the Duhamel formulation \eqref{Duha1}, 
with Lemma \ref{LEM:non0} and Proposition \ref{PROP:non1}.
Since the argument is standard, we omit details.

\medskip

Before proceeding further, we  recall the following calculus lemma; 
 see, for example,  \cite[Lemma~4.2]{Ginibre}.

\begin{lemma}\label{LEM:non2}

Let $\beta\geq \gamma\geq 0$ and $\be + \gamma>1$. Then, we have 
\begin{align*}
\int_{\R}
\frac{d\tau_1}{\jb{\tau - \tau_1}^\be  \jb{\tau_1}^\g}
\lesssim \frac 1{\jb{\tau}^\al}
\end{align*}

\noi
where $\al$ is given 
by \begin{align*}
\al = 
\begin{cases}
\g, & \text{if } \be > 1, \\
\g- \eps, & \text{if } \be = 1, \\
\be + \g - 1, & \text{if } \be < 1 
\end{cases}
\end{align*}

\noi
for any $\eps > 0$.
\end{lemma}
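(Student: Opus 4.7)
The plan is to reduce the one-dimensional convolution bound to a region-splitting argument based on the relative sizes of $|\tau_1|$, $|\tau-\tau_1|$, and $|\tau|$. I first observe that when $|\tau| \lesssim 1$, the integral is uniformly bounded by a constant thanks solely to the hypothesis $\beta + \gamma > 1$, so the claim is trivial in that regime. Thus I may assume $|\tau| \gg 1$ and it suffices to obtain the stated decay in $|\tau|$.

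For $|\tau| \gg 1$, I split $\R$ into three regions:
\begin{align*}
\mathrm{I} & = \{\tau_1 : |\tau_1| \le \tfrac{|\tau|}{4}\}, \\
\mathrm{II} & = \{\tau_1 : |\tau - \tau_1| \le \tfrac{|\tau|}{4}\}, \\
\mathrm{III} & = \{\tau_1 : |\tau_1| > \tfrac{|\tau|}{4} \text{ and } |\tau - \tau_1| > \tfrac{|\tau|}{4}\}.
\end{align*}
On Region~I we have $\jb{\tau - \tau_1} \sim \jb{\tau}$, so the contribution is bounded by $\jb{\tau}^{-\beta} \int_{|\tau_1| \le |\tau|/4} \jb{\tau_1}^{-\gamma}\, d\tau_1$. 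A direct computation of this one-variable integral produces the factor $1$, $\log\jb{\tau}$, or $\jb{\tau}^{1-\gamma}$ according to whether $\gamma > 1$, $\gamma = 1$, or $\gamma < 1$. On Region~II we have $\jb{\tau_1} \sim \jb{\tau}$ and the analogous estimate yields $\jb{\tau}^{-\gamma}$ times $1$, $\log\jb{\tau}$, or $\jb{\tau}^{1-\beta}$ in the three subcases of $\beta$.

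Region~III is treated by noting that on it $\jb{\tau_1} \sim \jb{\tau - \tau_1}$, and both are $\gtrsim \jb{\tau}$, so the integrand is dominated by $\jb{\tau_1}^{-(\beta+\gamma)}$; integrating over $|\tau_1| \gtrsim |\tau|$ and using $\beta + \gamma > 1$ gives a contribution of size $\jb{\tau}^{1-\beta-\gamma}$. Combining the three regions and using the assumption $\beta \ge \gamma$ to identify the dominant contribution in each subcase, I expect to recover exactly $\jb{\tau}^{-\alpha}$ with $\alpha$ as stated: when $\beta > 1$ Region~I is dominant and yields $\jb{\tau}^{-\gamma}$; when $\beta = 1$, the logarithmic factor is absorbed into $\jb{\tau}^{\eps}$, giving $\jb{\tau}^{-(\gamma-\eps)}$; when $\beta < 1$, Regions~I and III both contribute $\jb{\tau}^{1-\beta-\gamma}$, which is the answer.

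I do not expect any genuine obstacle: this is a one-variable integral and the regions are essentially dictated by the peel-off $\jb{\tau - \tau_1} \sim \max(\jb{\tau},\jb{\tau_1})$. The only point needing minor care is the $\beta = 1$ case, where one must isolate the logarithm from Region~I and trade it for $\jb{\tau}^{\eps}$ at the cost of an arbitrarily small loss in the exponent, which is precisely the content of the $\gamma - \eps$ in the statement. The hypothesis $\beta \ge \gamma$ ensures that the worst Region~II contribution never exceeds the Region~I contribution, so it suffices to book-keep the cases driven by $\beta$ alone.
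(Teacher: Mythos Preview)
Your argument is correct and is exactly the standard proof of this calculus lemma. The paper itself does not prove Lemma~\ref{LEM:non2}; it simply records the statement and refers to \cite[Lemma~4.2]{Ginibre} for a proof. So there is nothing to compare against beyond noting that your region-splitting approach is the canonical one.

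One small slip in your summary paragraph: when $\beta>1$, it is Region~II (where $\jb{\tau_1}\sim\jb{\tau}$, so one pulls out $\jb{\tau}^{-\gamma}$) that is dominant and yields the stated bound $\jb{\tau}^{-\gamma}$, not Region~I. Your per-region computations are correct; only the label in the final bookkeeping is swapped. This does not affect the validity of the argument.
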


We now present a proof of Proposition \ref{PROP:non1}.

 \begin{proof}[Proof of Proposition \ref{PROP:non1}]
By the definition of the restricted norm \eqref{Xsb2}, it suffices to prove 
the following bound:
\begin{align}
\| u_1 \cj u_2 u_3\|_{X^{s, -\frac{1}{p}+2\eps}_{p}}\lesssim 
\prod_{j = 1}^3 \| u_j\|_{X^{s, 1-\frac{1}{p}+\eps}_{p}}
\label{tri2}
\end{align}

\noi
without time localization.
By writing 
\begin{align}\label{tri1a}
u_1 \cj u_2 u_3
= \Nf(u_1, u_2, u_3) + \Rf_1(u_1, u_2, u_3) + \Rf_2(u_1, u_2, u_3), 
\end{align}

\noi
where $\Nf$, $\Rf_1$, and $\Rf_2$ are as in \eqref{non1}, 
we estimate
the $X^{s, -\frac{1}{p}+2\eps}_{p}$-norm of each term
on the right-hand side of \eqref{tri1a} in the following.

  Let $\s_0=\tau+|n|_-^2$ and 
  $\s_j=\tau_j+|n_j|_-^2$,  $j=1,2,3$. 
  The estimate \eqref{tri2} 
  for  the non-resonant part $\Nf(u_1, u_2, u_3)$ follows, once we prove
\begin{align}\label{tri3}
\bigg\| \sum_{\substack{n=n_1-n_2+n_3\\ n \neq n_1, n_3 }}
\, \intt_{\tau=\tau_1-\tau_2+\tau_3}M\prod_{j=1}^3f_j^*(\tau_j, n_j)d\tau_1d\tau_2
\bigg\|_{\ell^p_nL^p_{\tau}}\lesssim \prod_{j=1}^3\| f_j\|_{\ell^p_nL^p_{\tau}}, 
\end{align}

\noi
 where 
 \begin{align*}
 f^*(\tau_j , n_j)
= \begin{cases}
 f(\tau_j, n_j), & \text{when $j= 1, 3$,}\rule[-3mm]{0pt}{0pt} \\
\cj{ f(\tau_2, n_2)}, & \text{when $j= 2$, }
\end{cases}
\end{align*}

\noi
and 
 \begin{align*}
     M=M(n, n_1, n_2, n_3, \tau, \tau_1, \tau_2, \tau_3)=\frac{\langle n\rangle^s}{\langle \sigma_0\rangle^{\frac{1}{p}-2\eps}\prod_{j=1}^3\langle n_j\rangle^{s}\langle \sigma_j\rangle^{\frac{1}{p'}+\eps}}.
 \end{align*} 
 
 \noi
  By H\"older's inequality, we have 
 \begin{align*}
   \text{LHS of } \eqref{tri3}\lesssim 
\bigg( \sup_{n,\tau}\sum_{\substack{n=n_1-n_2+n_3\\ n \neq n_1, n_3 }}
\, \int\limits_{\tau=\tau_1-\tau_2+\tau_3}M^{p'}d\tau_1d\tau_2\bigg)^{\frac{1}{p'}}\prod_{j=1}^3\| f_j\|_{\ell^p_nL^p_{\tau}}.    
  \end{align*}

\noi
Hence, \eqref{tri3} follows once we show
\begin{align}
\sup_{n,\tau} I_{n,\tau}(M) 
:= \sup_{n,\tau}\sum_{\substack{n=n_1-n_2+n_3\\ n \neq n_1, n_3 }}
  \, \int\limits_{\tau=\tau_1-\tau_2+\tau_3}M^{p'}d\tau_1d\tau_2
  <\infty.   
\label{tri4}  
\end{align}

By applying Lemma \ref{LEM:non2} 
integrations to $\tau_1, \tau_2$ integrals, we have 
\begin{align*}
 I_{n,\tau}(M)\lesssim \frac{ \langle n\rangle^{sp'}}{\langle\sigma_0\rangle^{\frac{1}{p-1}-2\eps p'}}\sum_{\substack{n=n_1-n_2+n_3\\ n \neq n_1, n_3 }}\frac{1}{\langle \Phi(\bar n)-\sigma_0\rangle^{1+\eps p'}\prod_{j=1}^3\langle n_j\rangle^{sp'}},    
\end{align*}

\noi
where $\Phi(\bar n)$ is as in \eqref{phi1}. 
By symmetry, we assume $|n_1| \ge |n_2|, |n_3|$,
which implies $|n_1|\ges |n|$.
Then, from the hyperbolic counting estimate (Lemma 
\ref{LEM:count1}), we have, for any $\ta > 0$, 
\begin{align*}
    I_{n,\tau}(M)
&\lesssim  
\sum_{\mu\in\Z}\frac{1}{\jb{\mu-\sigma_0}^{1+\eps p'}}
\sum_{\substack{N_2, N_3\ge 1\\\text{dyadic}}}\frac{1}{N_2^{sp'}N_3^{sp'}}
\sum_{\substack{n_1, n_2, n_3\in \Z^2\\n=n_1-n_2+n_3
\\n_2\neq n_1, n_3 \\ |n_2|\sim N_2,|n_3|\sim N_3}}
\ind_{\substack{ \Phi(\bar n) = \mu }}\\
&\les
\sum_{\mu\in\Z}\frac{1}{\jb{\mu-\sigma_0}^{1+\eps p'}}
\cdot 
\sum_{\substack{N_2, N_3\ge 1\\\text{dyadic}}}\frac{1}{N_2^{sp'-1 - \ta}N_3^{sp'-1-\ta}}\\
& \les 1
\end{align*}

\noi
uniformly in $\tau \in \R$ and $n \in \Z^2$, 
provided that $s>\frac 1{p'} = 1-\frac{1}{p}$. 
(and by taking $\ta > 0$ sufficiently small such that $sp' > 1 + \ta$).
This proves~\eqref{tri4}.

Next, we consider $\Rf_1$ in \eqref{tri1a}.
From  \eqref{non1}, 
Young's inequality,  the embedding $\ell_n^p \subset \ell_n^{3p}$, 
and H\"older's inequality in $\tau$, 
we have 
\begin{align*}
\| \Rf_1(u_1, u_2, u_3)\|_{X^{s, -\frac{1}{p}+2\eps}_{p}}
&\le \big\| \jb{n}^s\big[\ft u_1(\cdot, n)* \cj{\ft  u_2 (\cdot, n)}*\ft u_3(\cdot, n)\big](\tau)\big\|_{\ell^p_nL^p_{\tau}} \\
&\le \prod_{j = 1}^3  \| \langle n \rangle^s\widehat u_j(n, \tau)\big\|_{\ell^{3p}_nL^\frac{3p}{2p+1}_{\tau}}\\
&\lesssim \prod_{j = 1}^3 \| u_j\|_{X^{s, 1 - \frac 1p + \eps}_p}^3, 
\end{align*}

\noi
where the convolutions are in temporal frequencies.
This proves  \eqref{tri2}
for $\Rf_1$. 

Lastly,  we consider $\Rf_2$ in \eqref{tri1a}.
From  \eqref{non1}, 
\eqref{crit2}, and \eqref{Xsb1a}, we have 
\begin{align*}
 \| \Rf_2(u_1, u_2, u_3)\|_{X^{s, -\frac{1}{p}+2\eps}_{p}}
&\le
\prod_{j = 1}^2 
\| u_j \|_{C_t L^2_x}\cdot \| u_3\|_{X^{s, 0}_{p}}\\
&\lesssim \prod_{j = 1}^3 \| u_j\|_{X^{s, 1 - \frac 1p + \eps}_p}^3, 
\end{align*}

\noi
yielding \eqref{tri2}
for $\Rf_2$. 
This concludes the proof of Proposition \ref{PROP:non1}.
 \end{proof}

\subsection{Failure of $C^3$-smoothness}
\label{SUBSEC:A2}

We conclude this paper by proving 
 Theorem~\ref{THM:1}\,(ii)
 and 
  Theorem~\ref{THM:3}\,(ii).
Proceeding as in \cite{BO97}, 
 Theorem~\ref{THM:1}\,(ii)
 on the failure of $C^3$-smoothness
of the solution map
for the cubic HNLS \eqref{HNLS1}
follows once we show unboundedness
of the Picard second iterate:
\begin{align}
 A[f](t):=i \int^t_0 
 S(t - t') \big(|S(t') f|^2 S(t') f\big) dt'.
\label{ill1}
 \end{align}

\noi
In \cite{Wang}, 
the third author treated the 
 $p =2$ case 
by exploiting the observation 
that  
\[S(t) f = f\]
 
\noi
for any $t \in \R$, 
if  the Fourier support of $f$
is contained in the diagonal $\Dl(\Z^2)$  of $\Z^2$, 
namely, 
\begin{align}
\supp \ft f \subset \Dl(\Z^2): = \{ (k, k) \in \Z^2: k \in \Z\}.
\label{ill2}
\end{align}

\noi
We exploit the same idea
and prove the following lemma,
from which Theorem \ref{THM:1}\,(ii) follows.

 \begin{lemma}\label{LEM:ill}
Let $s \in \R$ and $1\leq p <\infty$.
Given  $N\in \NB$, 
define $f_N \in C^\infty(\T^2)$
by 
\begin{align}
f_N=N^{- s-\frac{1}{p}}\sum_{\substack{n\in \Dl(\Z^{2})\\ |n| \le N}}e^{in \cdot x}
\label{ill3}
\end{align}

\noi
such that $\|f_N \|_{\FL^{s, p}} \sim 1$, 
where $\Dl(\Z^2)$ is as in \eqref{ill2}.
Then,  given any $t > 0$, we have
\begin{align}\label{ill4}
\| A[f_N](t)\|_{\mathcal{F}L^{s,p}(\T^2)}\gtrsim t N^{2-2s-\frac{2}{p}},   
\end{align}

\noi
where $A$ is as in \eqref{ill1}.
In particular, the Picard second iterate operator $A$ in \eqref{ill1}
is unbounded on $\FL^{s, p}(\T^2)$ if $s <  1- \frac 1p$.

 \end{lemma}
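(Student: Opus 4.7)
The plan is to exploit the crucial observation (already used in \cite{Wang} for $p=2$) that the hyperbolic modulus $|n|_-^2 = k_1^2 - k_2^2$ vanishes on the diagonal $\Dl(\Z^2) = \{(k,k): k \in \Z\}$. Consequently, on the Fourier side, $\widehat{S(t)f_N}(n) = e^{-it|n|_-^2}\ft f_N(n) = \ft f_N(n)$ for all $n \in \Dl(\Z^2)$ and all $t \in \R$, so that $S(t)f_N = f_N$ identically in $t$. Moreover, since the diagonal is closed under $n_1 - n_2 + n_3$, the nonlinearity $|f_N|^2 f_N$ remains Fourier-supported on $\Dl(\Z^2)$, and hence $S(t-t')(|f_N|^2 f_N) = |f_N|^2 f_N$ as well. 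Plugging this into \eqref{ill1} would then yield the explicit identity
\begin{align*}
A[f_N](t) = i t \, |f_N|^2 f_N,
\end{align*}
reducing \eqref{ill4} to a lower bound on $\||f_N|^2 f_N\|_{\F L^{s,p}}$.

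The second step is a counting computation on the diagonal. Writing $n = (k,k)$ and $n_\l = (k_\l, k_\l)$, I would compute
\begin{align*}
\F_x(|f_N|^2 f_N)(n) = N^{-3s-\frac 3p}\, \big|\big\{(k_1, k_2, k_3) \in \Z^3 : |k_\l|\le \tfrac{N}{\sqrt 2}, \ k_1 - k_2 + k_3 = k\big\}\big|
\end{align*}
for $n \in \Dl(\Z^2)$, and observe that for $|k| \le cN$ with $c>0$ sufficiently small the cardinality above is $\gtrsim N^2$. This yields a pointwise lower bound $|\F_x(|f_N|^2 f_N)(n)| \gtrsim N^{2-3s-\frac 3p}$ on a set of $\sim N$ diagonal frequencies $n = (k,k)$ with $\jb{n} \sim \jb{k}$.

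Finally, I would plug this bound into the $\F L^{s,p}$-norm, estimating
\begin{align*}
\||f_N|^2 f_N\|_{\F L^{s,p}}^p \gtrsim \sum_{|k| \le cN} \jb{k}^{sp}\bigl(N^{2-3s-\frac 3p}\bigr)^p \sim N^{sp+1} \cdot N^{(2-3s-\frac 3p)p} = N^{(2-2s-\frac 2p)p},
\end{align*}
so that $\||f_N|^2 f_N\|_{\F L^{s,p}} \gtrsim N^{2-2s-\frac 2p}$, which gives \eqref{ill4} after multiplication by $t$. The same $f_N$ verifies $\|f_N\|_{\F L^{s,p}} \sim 1$ by an identical (but one-step) counting. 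When $s < 1 - \tfrac 1p$, the exponent $2-2s-\tfrac 2p$ is positive, so $N\to\infty$ forces unboundedness of $A$, and hence failure of $C^3$-smoothness of the solution map; the standard argument of \cite{BO97} then gives Theorem \ref{THM:1}\,(ii). The only mild subtlety, rather than a genuine obstacle, is to choose the cutoff $|k| \le cN$ small enough to guarantee the $\gtrsim N^2$ lower bound on the convolution count; this is handled by a direct volume/counting argument. For Theorem \ref{THM:3}\,(ii), the identical computation applies because the trilinear terms in the normal form equation \eqref{NF1a}, evaluated on $f_N$, again reduce to $it|f_N|^2 f_N$ (modulo harmless lower-order resonant contributions) by the same diagonal mechanism.
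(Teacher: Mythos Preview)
Your proposal is correct and follows essentially the same approach as the paper: exploit $S(t)f_N=f_N$ on the diagonal to reduce $A[f_N](t)$ to $it|f_N|^2 f_N$, then obtain the lower bound via a direct convolution count on $\Dl(\Z^2)$. The paper's proof is terser but identical in substance; your only minor imprecision is the step $\sum_{|k|\le cN}\jb{k}^{sp}\sim N^{sp+1}$, which as written requires $sp>-1$, but restricting instead to a dyadic shell $|k|\sim cN$ (where $\jb{k}\sim N$) fixes this uniformly in $s$ and gives \eqref{ill4} for all $s\in\R$.
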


 \begin{proof} %[Proof of Lemma \ref{LEM:ill}]
 Fix $t > 0$. Since $\supp \ft f_N \subset \Dl(\Z^2)$, 
 we have $S(t) f_N = f_N$.
 Then, from~\eqref{ill1}, 
 we have 
 \begin{align*}
\ft {A[f_N]}(t,n)= i t N^{-3s -\frac{3}{p}}
\sum_{\substack{n_1, n_2, n_3 \in \Dl(\Z^2)\\
|n_1|, |n_2|, |n_3| \sim N}}
\ind_{n = n_1 - n_2 + n_3}
 \end{align*}

\noi
and thus 
 \begin{align*}
|\ft {A[f_N]}(t,n)|
\sim   t N^{2-3s -\frac{3}{p}}, 
 \end{align*}

\noi
from which \eqref{ill4} follows.
 \end{proof}

Let us now turn to a proof of 
Theorem \ref{THM:3}\,(ii).
We proceed as in \cite{BO97}, 
but in the context of the normal form equation \eqref{NF1a}.
Given $N \in \NB$, let $f_N$ be as in \eqref{ill3}.
Then, given $\dl > 0$, 
let $u_{\dl, N}$ be the (unique) smooth solution
to the normal form equation  \eqref{NF1a}
with $u_{\dl, N}|_{t = 0} = \dl f_N$.
Proceeding as in \cite{BO97} with 
$u_{\dl, N}|_{\dl = 0} \equiv 0$, 
we see that the only non-zero contribution to 
 \begin{align*}
\frac {\dd^3 u_{\dl, N} (t)}{\dd \dl^3} \bigg|_{\dl = 0}
\end{align*}

\noi
comes from 
the terms which are cubic in $S(t) f_N$.
Since  $\ft f_N$ is supported on 
the diagonal  $\Dl(\Z^2)$  of $\Z^2$
 (in particular, we have $|n|_- = 0$ for any $n \in \supp (\ft f_N$)), 
such cubic terms must be all resonant (i.e.~$\Phi(\bar n) = 0$).
Thus, from 
\eqref{NF1a}
with \eqref{HNLS2},  \eqref{NN1},  and \eqref{NF1b}, 
we have 
 \begin{align*}
\frac {\dd^3 u_{\dl, N} (t)}{\dd \dl^3} \bigg|_{\dl = 0}
& %\hphantom{X}
= \int_0^t S(t - t ') \bigg\{
\Nf_1^{(1)}(S(t') f_N)   +  \Rf^{(1)}(S(t') f_N)\bigg\} dt'\\
& = A[f_N](t).
\end{align*}

\noi
Then, the claim follows from Lemma \ref{LEM:ill}.

\begin{ack}	
\rm

E.B. and Y.W. were supported by the EPSRC New Investigator Award (grant no. EP/V003178/1).
T.O.~was supported by the European Research Council (grant no.~864138 ``SingStochDispDyn")
and by the EPSRC
Mathematical Sciences Small Grant (grant no. EP/Y033507/1).

\end{ack}

\end{document}